\documentclass[12pt,reqno,a4paper]{amsart}
\usepackage[utf8]{inputenc}
\usepackage[T1]{fontenc}
\usepackage{amssymb}
\usepackage{pgf,tikz,pgfplots}
\pgfplotsset{compat=1.12}
\usepackage{mathrsfs}
\usetikzlibrary{arrows}
\usepackage{bm}
%\usetikzlibrary{arrows}
\usepackage[alpine]{ifsym}
\usepackage{xpicture}
\usepackage{calculator}
\usepackage{graphicx}
\usepackage{enumerate}

\textwidth=15cm \textheight=22cm \topmargin=0.5cm
\oddsidemargin=0.5cm \evensidemargin=0.5cm
\usepackage[a4paper,top=3.3cm,bottom=3cm,left=3cm,right=3cm,bindingoffset=5mm]{geometry}

%Comentarios márgenes.
\def\sideremark#1{\ifvmode\leavevmode\fi\vadjust{\vbox to0pt{\vss % the remark
      \hbox to 0pt{\hskip\hsize\hskip1em           %                will appear only
 \vbox{\hsize2cm\tiny\raggedright\pretolerance10000%                on the side
 \noindent #1\hfill}\hss}\vbox to8pt{\vfil}\vss}}} %
                                                   %                in 3cm
                                                   %                wide box
                                                   %

\usepackage{color}

\usepackage[colorlinks=true]{hyperref}
\newtheorem{theorem}{Theorem}[section]
\newtheorem{lemma}[theorem]{Lemma}
\newtheorem{corollary}[theorem]{Corollary}
\newtheorem{proposition}[theorem]{Proposition}

\theoremstyle{definition}
\newtheorem{example}[theorem]{Example}
\newtheorem{remark}[theorem]{Remark}
\newtheorem{definition}[theorem]{Definition}

\numberwithin{equation}{section}

  % Menge
  % 'mit'-Symbol in Mengen
 %definiert als

%\newcommand{\CC}{\mathbb{C}}

%\newcommand{\Pc}{\mathcal{P}}

%\newcommand{\bb}{\mathbf{b}}

%\newcommand{\FF}{\mathcal{F}}
 %Laurent Series
 %Laurent polynomial

%\newcommand{\nt}{\tilde{h}}
%\newcommand{\kt}{\tilde{k}}

%\newcommand{\Q}[1]{\mathcal{N}(#1)}

%Hilbert depth of $\ZZ^2$--graded modules over the  polynomial ring $\kk[X_1,X_2]$

%------------------------------------------------------------------------------------------------------------------------------------------------------------------------
\begin{document}
\title[Newton-Okounkov bodies for valuations of Hirzebruch surfaces]{Seshadri-type constants and Newton-Okounkov bodies for non-positive at infinity valuations of Hirzebruch surfaces}

\author[C. Galindo]{Carlos Galindo}

\address{Universitat Jaume I, Campus de Riu Sec, Departamento de Matem\'aticas \& Institut Universitari de Matem\`atiques i Aplicacions de Castell\'o, 12071
Caste\-ll\'on de la Plana, Spain.}\email{galindo@uji.es}  \email{cavila@uji.es}

\author[F. Monserrat]{Francisco Monserrat}
\address{Instituto Universitario de
Matem\'atica Pura y Aplicada, Universidad Polit\'ecnica de Valencia,
Camino de Vera s/n, 46022 Valencia (Spain).}
\email{framonde@mat.upv.es}

\author[C.-J. Moreno-\'Avila]{Carlos-Jes\'us Moreno-\'Avila}

%\address{Universitat Jaume I, Campus de Riu Sec, Departamento de Matem\'aticas \& Institut Universitari de Matem\`atiques i Aplicacions de Castell\'o, 12071
%Caste\-ll\'on de la Plana, Spain}

%
\subjclass[2010]{Primary: 14C20, 14E15, 13A18}
\keywords{Newton-Okounkov bodies; Flags; Non-positive at infinity valuations}
\thanks{Partially supported by the Spanish Government MICINN/FEDER/AEI/UE, grants PGC2018-096446-B-C22, RED2018-102583-T and BES-2016-076314, as well as by Universitat Jaume I, grant UJI-B2018-10.}

\begin{abstract}
We consider flags $E_\bullet=\{X\supset E\supset \{q\}\}$,
where $E$ is an exceptional divisor defining a non-positive at infinity divisorial valuation $\nu_E$ of a Hirzebruch surface $\mathbb{F}_\delta$ and $X$ the surface given by $\nu_E,$ and determine an analogue of the Seshadri constant for pairs $(\nu_E,D)$, $D$ being a big divisor on $\mathbb{F}_\delta$. The main result is an explicit computation of the vertices of the Newton-Okounkov bodies of pairs $(E_\bullet,D)$ as above, showing that they are quadrilaterals or triangles and distinguishing one case from another. 
\end{abstract}

\maketitle
\section{Introduction}

Let $L$ be a big line bundle of a normal projective complex variety $X$. Consider a real valuation $\nu$ of $X$, that is a valuation of the function field of $X$ centered at the local ring of a closed point in $X$. Assume $H^0(L)\neq 0$ and set $\hat{\mu}_L(\nu)=\lim_{m\to\infty}m^{-1}a_{\rm max}(mL,\nu)$, where $a_{\rm max}(mL,\nu)$ is the last value of the vanishing sequence of $H^0(mL)$ along $\nu$ \cite{BouKurMacSze}. The value $\hat{\mu}_L(\nu)$ contains, for valuations, similar information as the Seshadri constant for points; then we consider it as a Seshadri-type constant for the pair $(L,\nu)$. Seshadri constants were used in \cite{Dem} for studying the Fujita conjecture and other Seshadri-type constants were introduced in \cite{CutEinLaz} for ideal sheaves. The bound $\hat{\mu}_L(\nu)\geq \sqrt{L^2/\text{vol}(\nu)}$, where $\text{vol}(\nu)$ means volume of the valuation $\nu$, is proved in \cite{BouKurMacSze} but the exact value of $\hat{\mu}_L(\nu)$ is, in general, very hard to compute.

A flag of subvarieties of a smooth irreducible complex projective variety $X$ (of dimension $n$) is a sequence of smooth irreducible subvarieties $Y_j$, $0\le j\leq n,$
$$
Y_\bullet:=\{X=Y_0\supset Y_1\supset \ldots \supset Y_n=\{q\}\},
$$ 
where each $Y_j$ has codimension $j$ in $X$. $Y_\bullet$ defines a rank $n$  valuation $\nu_{Y_\bullet}$ of the function field $K(X)$  and the Newton-Okounkov body $\Delta_{\nu_{Y_\bullet}}(D)$ of a big divisor $D$ on $X$ with respect to $\nu_{Y_\bullet}$ (or $Y_\bullet$)  is the closed convex hull of the set 
$$
\bigcup_{m \geq 1}\left \{ \frac{\nu_{Y_\bullet} (f)}{m} \;|\;  f \in H^0(X, {\mathcal O}_{X}(mD))  \setminus \{0\} \right \}.
$$
Newton-Okounkov bodies were introduced by Okounkov \cite{Oko1,Oko2,Oko3} and afterwards developed by Lazarsfeld and Musta{\c t}{\u a} \cite{LazMus} and Kaveh and Khovanskii \cite{KavKho}. These bodies allow us to study linear systems defined by the involved divisor and valuation. As in the case of $\hat{\mu}_L(\nu)$, an explicit computation of these bodies is also very difficult.

Set $p$ a point of the complex projective plane $\mathbb{P}^2=\mathbb{P}^2_\mathbb{C}$. When the flag is $E_\bullet=\{X\supset E\supset \{q\}\}$, $X$ being the rational surface given by a divisorial valuation $\nu_E$ of the fraction field of $\mathcal{O}_{\mathbb{P}^2,p}$ centered at $\mathcal{O}_{\mathbb{P}^2,p}$ defined by the exceptional divisor $E$, $\nu_{E_\bullet}$ is an exceptional curve valuation of the function field of $\mathbb{P}^2$ centered at $\mathcal{O}_{\mathbb{P}^2,p}$. Exceptional curve valuations constitute one of the five classes in the Spivakovsky classification of valuations of function fields of surfaces \cite{Spiv} and its denomination comes from \cite{FavJon1}. The Newton-Okounkov body of a divisor associated with the pull-back of the line bundle $L=\mathcal{O}_{\mathbb{P}^2}(1)$ with respect to $\nu_{E_\bullet}$ has been described, being the Seshadri-type constant $\hat{\mu}_L(\nu_E)$ an important ingredient (see \cite{GalMonMoyNic2} and \cite{CilFarKurLozRoeShr}). This constant has been found useful to treat other important problems. Indeed, $\nu_E$ is called \emph{minimal} when $\hat{\mu}_L(\nu_E)=\sqrt{1/\text{vol}(\nu_E)}$ and a conjecture  strongly involving the above concept and formulated in \cite{GalMonMoy} proves certain evidence in the direction of Nagata's conjecture (see also \cite{DumHarKurRoeSze}). Non-positive at infinity valuations of $\mathbb{P}^2$ constitute an interesting class of divisorial valuations $\nu_E$.  Recently, valuations in this last class  have been studied and used in several contexts \cite{CamPilReg,FavJon3,Mond}. Among their important properties  one can mention that they determine those surfaces given by divisorial valuations of $\mathbb{P}^2$ whose cone of curves is finitely generated and its extremal rays are as few as possible \cite{GalMon}; $\hat{\mu}_L(\nu_E)$ can be explicitly obtained \cite{GalMonMoy}; and the vertices of the Newton-Okounkov body with respect to any valuation $\nu_{E_\bullet}$ as above, where $\nu_E$ is a non-positive at infinity divisorial valuation, can also be explicitly computed \cite{GalMonMoyNic2}. 

In this paper, our basic variety will be $\mathbb{F}_\delta,$ the $\delta$th Hirzebruch surface (for $\delta\geq 0$). For divisorial valuations of these surfaces (that is, those divisorial valuations of the function field  $K(\mathbb{F}_\delta)$ centered at the local ring $\mathcal{O}_{\mathbb{F}_\delta,p}$ of a closed point $p\in \mathbb{F}_\delta$), one can also introduce a concept of non-positivity at infinity, which depends on the value of $\delta$, the position of the point $p$ and certain linear systems (see Definitions \ref{Def_spe_nspe_val} and \ref{Def_NPI_spe_nspe_val}).  As in the case of $\mathbb{P}^2$, these valuations determine those rational surfaces $Z$ defined by divisorial valuations of Hirzebruch surfaces such that the number of generators of their cones of curves are reduced to the minimum possible  \cite{GalMonMor}. Notice that although the  valuations of $\mathbb{F}_\delta$ do no differ from those of $\mathbb{P}^2$ (when they are considered as local objects), the classes of non-positive at infinity valuations of $\mathbb{P}^2$ and $\mathbb{F}_\delta$ are different \cite[Remark 3.10]{GalMonMor}.

The goals of this paper are two-fold. On the one hand, to introduce (and consider) a concept of minimality for divisorial valuations of $\mathbb{F}_\delta$ (Definition \ref{Def_minimal}) and the \emph{computation of the value} $\hat{\mu}_D(\nu)$ for any non-positive at infinity divisorial valuation $\nu$ of $\mathbb{F}_\delta$ and any big divisor $D$ on $\mathbb{F}_\delta$ (see Theorem \ref{Thm_mupico_nonpositive}). Notice that in our situation $\hat{\mu}_D(\nu)=\sup\{t>0\, | \, D^*-tE \text{ is big on }Z\}$  where $D^*$ is the pull-back of $D$ on $Z$ and $E$ the divisor defining $\nu$.
On the other hand, with the help of the previous computation, the \emph{explicit determination} of the vertices of the Newton-Okounkov bodies of big divisors $D$ with respect to  flags $E_\bullet=\{Z\supset E \supset \{q\}\},$ where $Z$ is the rational surface defined by some valuation $\nu$ as above, $E$ the defining divisor of $\nu$ and $D$ the pull-back of a big divisor on $\mathbb{F}_\delta$. Our main results are Theorems \ref{Thm_NOBminimalvaluation}, \ref{Thm_NOBodies_specialcase}, \ref{Thm_NOBcasoespecial_g*=1F}, \ref{Thm_NOBcasoespecial_g*=1M0},  \ref{Thm_NOBodies_nonspecialcase} and \ref{Thm_NOBcasonoespecial_g*=1M1} where, as a consequence of an explicit calculation, we prove that the vertices of our Newton-Okounkov bodies depend only on the expression of $D$, the volume of $\nu$ and the values of the germs at $p$ of the fibre and sections on $\mathbb{F}_\delta$ whose strict transforms (together with those of the exceptional divisors) span the cone of curves. These values are with respect to the two divisorial valuations involved in the exceptional curve valuation $\nu_{E_\bullet}$ (see the paragraph before Definition \ref{Def_spe_nonspe_except_val}).

Section \ref{Sec_Seshadri-typeconstants} introduces the concepts considered in the paper, special and non-special, minimal, non-positive at infinity divisorial valuations, that will be extended to exceptional curve valuations $\nu$ in Section \ref{Sec_NOBFdelta}. We show in Theorem \ref{Thm_NOBminimalvaluation} that minimal with respect to a big divisor $D$ exceptional curve valuations $\nu$ of $\mathbb{F}_\delta$ are those whose Newton-Okounkov body $\Delta_\nu(D)$ is the triangle $T$ given by the truncated convex cone of the $(x,y)$-plane generated by the value semigroup of $\nu$ and bounded by the line $x=\hat{\mu}_D(\nu_r),$ $\nu_r$ being the divisorial valuation defined by the first projection of $\nu$. This fact also happens for valuation of $\mathbb{P}^2$. When $\nu_r$ is not minimal, in our case ($\nu_r$ is non-positive at infinity), $\Delta_\nu(D)$ is either a quadrilateral or a triangle. This last case only happens under certain conditions which depend on the divisor $D$ and the valuation $\nu_r$. Seshadri-type constants and Newton-Okounkov bodies with respect to non-positive at infinity valuations of $\mathbb{P}^2$ can be obtained as a particular case of the results in Sections \ref{Sec_Seshadri-typeconstants} and \ref{Sec_NOBFdelta}.

\section{Seshadri-type constants for non-positive at infinity valuations of Hirzebruch surfaces}\label{Sec_Seshadri-typeconstants}

\subsection{Hirzebruch surfaces and valuations of Hirzebruch surfaces}\label{Subsec_Hirz_mupico}

Let $\mathbb{P}^1=\mathbb{P}_\mathbb{C}^1$ be the projective line over the complex field $\mathbb{C}$ and $\delta$ a non-negative integer. The $\delta$th \emph{Hirzebruch surface} is the projective ruled surface over $\mathbb{P}^1$, $\mathbb{F}_\delta : =\mathbb{P}(\mathcal{O}_{\mathbb{P}^1}\oplus\mathcal{O}_{\mathbb{P}^1}(-\delta))$,  together with the projection morphism $\mathrm{pr}:\mathbb{F}_\delta\to \mathbb{P}^1$. The Picard group Pic$(\mathbb{F}_\delta)$ of $\mathbb{F}_\delta$ is isomorphic to $\mathbb{Z}\oplus\mathbb{Z}$ and admits as generators the class of a fiber $F$ of $\mathrm{pr}$ and that of a section $M$ of $\mathrm{pr}$ linearly equivalent to $\delta F + M_0$ satisfying that $M\cap M_0 = \emptyset$, where $M_0$ denotes, if $\delta>0$ (respectively, $\delta=0$) the unique section on $\mathbb{F}_\delta$ with negative self-intersection (respectively, a section); see for instance \cite[Proposition IV.1]{Beau}. It holds that $F^2=0, F\cdot M=1$ and $M^2=\delta$. 

In the case $\delta>0$, the section $M_0$ is called \emph{special}, and a point $p$ of $\mathbb{F}_\delta$ is \emph{special} if $p\in M_0$ and \emph{general} otherwise. A nef (respectively, big) divisor on $\mathbb{F}_\delta$ is linearly equivalent to $aF + bM$, where $a$ and $b$ are non-negative integers (respectively, $a$ and $b$ are integers such that $b>0$ and $a>-\delta b$ (see \cite[Remark 2.2.27]{Laz1}). 

\medskip

Let $(R,\mathfrak{m})$ be a two-dimensional local regular ring and $K$ its fraction field. A valuation of $K$ is a surjective map $\nu:K^*(=K\setminus \{0\})\to G,$ where $G$ is a totally ordered commutative group, such that, for $f,g\in K^*$, satisfies
$$
\nu (f+g) \geq \min\{\nu(f),\nu(g)\} \text{ and } \nu(f g)=\nu(f) + \nu(g).
$$ 
The local ring $R_\nu=\{f\in K \,|\,\nu(f)\geq 0\}\cup\{0\},$ whose maximal ideal is $\mathfrak{m}_\nu=\{f\in K \,|\, \nu(f)>0\}\cup \{0\}$, is called the \emph{valuation ring} of $\nu$. When it holds that $R\cap\mathfrak{m}_\nu=\mathfrak{m}$, one says that $\nu$ is \emph{centered} at $R$. 

Valuations of $K$ centered at $R$ correspond one-to-one with simple sequences of point blowing-ups 
\begin{equation}\label{Eq_sequencepointblowingups}
\pi: \cdots \rightarrow Z_n\xrightarrow{\pi_n} Z_{n-1}\rightarrow \cdots \rightarrow Z_1 \xrightarrow{\pi_1} Z_0=\text{Spec}R,
\end{equation}
where the first blowing-up $\pi_1$ is at $p:=p_1$  corresponding to the maximal ideal $\mathfrak{m}$ and the blowing-up $\pi_{i+1}$ is centered at the unique closed point $p_{i+1}$ which belongs to the exceptional divisor created by $\pi_i$ such that the valuation is centered at $\mathcal{O}_{Z_i,p_{i+1}}$. The set $\mathcal{C}_\nu=\{p=p_1,p_2,\ldots\}$ is called the \emph{configuration of infinitely near points of} $\nu$. Denote by $E_i$ the exceptional divisor on $Z_i$ obtained by blowing-up $p_i$. A point $p_i$ is \emph{proximate} to $p_j$, denoted by $p_i\to p_j,$ when $p_i$ belongs to the strict transform of $E_j$ on $Z_{i-1}$. The point $p_i$ is called \emph{satellite} when it is proximate to $p_j$, for some $j<i-1$; otherwise, it is named \emph{free}. Given a divisor $D$ on $Z_i$, abusing of notation, we will denote by $\tilde{D}$ and $D^*$ the strict and total transforms of $D$ on any surface $Z_j$ for $j\geq i$; also the strict transforms of the exceptional divisors $E_i$ will be written simply $E_i$. 

The previous valuations were studied by Zariski and Abhyankar (see \cite{Abh1,Abh2,Zar3,ZarSam}). Spivakovsky, in \cite{Spiv}, classifies them in five types according to their dual graphs, which are trees whose vertices correspond $1$-$1$ with the exceptional divisors associated with the  sequence \eqref{Eq_sequencepointblowingups} and two vertices are joined by an edge if the corresponding exceptional divisors intersect. Each vertex of the dual graph is labelled by a positive integer $i$ which represents $E_i$. We say  that two vertices $\alpha$ and $\beta$ satisfy $\alpha \preccurlyeq\beta$ if the path in the dual graph joining $1$ and $\beta$ goes through $\alpha$.   

We are only interested in divisorial and exceptional curve valuations which are two of the types in Spivakovsky's classification. A valuation is \emph{divisorial} when $\mathcal{C}_\nu$ is finite and it is \emph{exceptional curve} (in the terminology of \cite{FavJon1}) if $\mathcal{C}_\nu$ is infinite and there exists a point $p_r\in\mathcal{C}_\nu$ such that $p_i\to p_r$ for all $i>r$. The group $G$ is isomorphic to $\mathbb{Z}$ with the usual ordering (respectively, $\mathbb{Z}^2$ with lexicographical ordering) when the valuation is divisorial (respectively, exceptional curve).
 
Let $\nu$ be a divisorial or exceptional curve valuation of $K$ centered at $R$ and $\mathcal{C}_\nu=\{p_i\}_{i\geq 1}$ its configuration. For each $i\geq 1$, denote by $\mathfrak{m}_i$  the maximal ideal of the local ring $R_i=\mathcal{O}_{Z_i,p_i}$ and set $\nu(\mathfrak{m}_i):=\min\{\nu(x) \ | \ x\in\mathfrak{m}_i\setminus\{0\}\}$. These values satisfy the \emph{proximity equalities} \cite[Theorem 8.1.7]{Cas}: $
\nu(\mathfrak{m}_i)=\sum_{p_j\to p_i}\nu(\mathfrak{m_j}), \ \ i\geq 1,$ whenever the set $\{p_j\in\mathcal{C}_\nu \ | \ p_j\to p_i\}$ is not empty. When $\nu$ is exceptional curve and $p_i\to p_r$ for every $i>r$, then $\nu(\mathfrak{m}_r)=(a,b)$ and $\nu(\mathfrak{m}_i)=(0,c),$ for some $a,b,c\in\mathbb{Z},$ $a,c>0$ \cite{DelGalNun}. 

Divisorial and exceptional curve valuations admit sets of invariants that help to study them, as the sequence of maximal contact values $\{\overline{\beta}_j(\nu)\}_{j=0}^{g+1}$ \cite[(1.5.3)]{DelGalNun} and the sequence of Puiseux exponents $\{\beta_j'(\nu)\}_{j=0}^{g+1}$. Notice that both sequences can be obtained one from other \cite[Theorem 1.11]{DelGalNun}. The set $\{\overline{\beta}_j(\nu)\}_{j=0}^g$ generates the semigroup of values of $\nu,$ $\nu(R\setminus\{0\})\cup \{0\}$. The continued fraction expansions of the values $\{\beta_j'(\nu)\}_{j=0}^{g+1}$ determine (and are  determined by) the dual graph of $\nu$. 

We are interested in geometric results concerning Hirzebruch surfaces and for this reason, from now on $R$ will be the local regular ring $\mathcal{O}_{\mathbb{F}_\delta,p}$, where $\mathbb{F}_\delta$ is a Hirzebruch surface over complex field $\mathbb{C}$ and $p$ a closed point of $\mathbb{F}_\delta$. Along the paper we denote by $\varphi_{C}$ the germ at $p$ of a curve $C$ on $\mathbb{F}_{\delta}$ and by $\varphi_i$ an analytically irreducible germ at $p$ of a curve whose strict transform on $Z_i$ is transversal to $E_i$ at a non-singular point of the exceptional locus. In this case, \emph{valuations of $K$ centered at $R$ will be called valuations of $\mathbb{F}_\delta$.}

\subsection{Seshadri-type constants for non-positive at infinity valuations of Hirzebruch surfaces}\label{Subsection_Seshadri-typeconstantNPI}
In \cite{BouKurMacSze}, the authors consider a vanishing sequence attached to a pair $(L,\nu),$ where $L$ is a big line bundle on a normal projective variety $X$ and $\nu$ is a real valuation of $X$, that is, a real valuation of $K(X)$ centered at the local ring of a closed point of $X$. The value $\lim_{m\to\infty}m^{-1}a_{\rm max}(mL,\nu)$, $a_{\rm max}(mL,\nu)$ being the last value of the above mentioned vanishing sequence, will be denoted $\hat{\mu}_L(\nu)$. When $X=\mathbb{P}^2$, this value encodes for valuations similar information as Seshadri constant for points and we say that $\hat{\mu}_L(\nu)$ is the Seshadri-type constant for the pair $(L,\nu)$. The explicit computation of these constants is a hard work. We devote this subsection to give some details on them when $X$ is a Hirzebruch surface $\mathbb{F}_\delta$ and $\nu$ a divisorial valuation, and to provide its exact value for a large family of divisorial valuations and any big divisor on $\mathbb{F}_\delta$.

Let $\mathbb{F}_\delta$ be a Hirzebruch surface and $p$ a closed point of $\mathbb{F}_\delta$. Let $\nu_n$ be a divisorial valuation of $\mathbb{F}_\delta$ defined by a sequence as \eqref{Eq_sequencepointblowingups} which finishes at $Z_n$. That is, $\nu_n$ is the valuation of the fraction field of $R:=\mathcal{O}_{\mathbb{F}_\delta,p}$ centered at $R$ defined by the exceptional divisor $E_n$. Consider the surface $Z=Z_n$ defined by \eqref{Eq_sequencepointblowingups} when $Z_0=\mathbb{F}_\delta$. According to \cite{EinLazSmi} the volume of $\nu_n$ can be defined as
 
\begin{equation*}
\text{vol}(\nu_n)=\lim_{\alpha\to\infty}\dfrac{\dim_\mathbb{C}(R/\mathcal{P}_\alpha)}{\alpha^2/2},
\end{equation*}
where $\mathcal{P}_\alpha=\{f\in R \ | \ \nu_n(f)\geq \alpha\}\cup\{0\}$. In this case $1/\text{vol}(\nu_n)$  coincides with the last value of the sequence of maximal contact values $\overline{\beta}_{g+1}(\nu_n)$ (see \cite[Remark 2.3]{GalMonMoyNic2}). 

Now consider a pseudoeffective divisor $D{\sim}aF+bM$ on $\mathbb{F}_\delta$, where $\sim$ denotes linear equivalence. $D$ admits a Zariski decomposition $D=P_D+N_D$, where $P_D$ and $N_D$ denote respectively the positive and the negative part of $D$ \cite[Theorem 2.3.19]{Laz1}. When $D$ is nef, then $N_D=0$; if $\delta>0$ and $D$ is big but not nef, then $P_D\sim (b+a/\delta)M$ and $N_D\sim (-a/\delta)M_0$, where $b>0$ and $-b\delta<a<0$. Moreover, the volume of $D$ is defined as
$$
\text{vol}(D)=\text{vol}_{\mathbb{F}_\delta}(D):=\limsup_{m\to\infty}\dfrac{h^0(\mathbb{F}_\delta,mD)}{m^2/2},
$$
and $D$ is a big divisor if and only if $\text{vol}(D)>0$. By \cite[Corollary 2.3.22]{Laz1} it holds  that $\text{vol}(D)=P_D^2$. 

\begin{definition}\label{def_mupicoD}
Let $\nu_n$ be a divisorial valuation of $\mathbb{F}_\delta$ and $D$ a big divisor on $\mathbb{F}_\delta$. Fo\-llowing \cite{BouKurMacSze} and \cite{DumHarKurRoeSze}, we define the values $\mu_D(\nu_n)$ and $\hat{\mu}_D(\nu_n)$ as 
$$
\mu_D(\nu_n):=\max\{\nu_n(\varphi_{D'})\ | \ D'\in |D|\} \text{ and } \, \hat{\mu}_D(\nu_n):=\lim_{m\to\infty}\dfrac{\mu_{mD}(\nu_n)}{m},
$$
where $\varphi_{D'}$ is the germ of $D'$ at $p$.
\end{definition}
By Proposition 2.9 in \cite{BouKurMacSze}, it holds that
\begin{equation}\label{Condition_mupico}
\hat{\mu}_D(\nu_n)\geq \sqrt{\dfrac{\text{vol}(D)}{\text{vol}(\nu_n)}}.
\end{equation}

\begin{definition}\label{Def_minimal}
Let $\nu_n$ be a divisorial valuation of $\mathbb{F}_\delta$ and $D$ a big divisor on $\mathbb{F}_\delta$. The valuation $\nu_n$ is \emph{minimal with respect to $D$} if  $\hat{\mu}_D(\nu_n)= \sqrt{\text{vol}(D)/\text{vol}(\nu_n)}$.
\end{definition}

\begin{remark}\label{Remark_muequalmupico}
{\rm 
Let $\nu_n$ be a divisorial valuation of $\mathbb{F}_\delta$ and $Z$ the surface that it defines. Assume that $D$ is a big divisor on $\mathbb{F}_\delta$. Then, by Theorem 6.4 of \cite{LazMus}, it holds the equality
$$ 
\hat{\mu}_D(\nu_n)=\sup\{t\in\mathbb{Q}_+ \ | \ D^*-tE_n \text{ is big on $Z$}\},
$$ 
where $\mathbb{Q}_+$ is the set of non-negative rational numbers.
}
\end{remark}

Our next definition divides divisorial valuations $\nu_n$ of $\mathbb{F}_\delta$ in two types accor\-ding to the value $\delta$ and the point $p$ where $\nu_n$ is centered. This classification was introduced in \cite{GalMonMor}. 

\begin{definition}
\label{Def_spe_nspe_val}
Let $\nu_n$ be a valuation of the quotient field of $\mathcal{O}_{\mathbb{F}_\delta,p}$ centered at $\mathcal{O}_{\mathbb{F}_\delta,p}.$ The valuation $\nu_n$ is called to be {\it special} (with respect to $\mathbb{F}_\delta$ and $p$) when one of the following conditions holds:
\begin{enumerate}
\item $\delta = 0$.
\item $\delta >0$ and $p$ is a special point.
\item $\delta >0$, $p$ is a general point and there is no integral curve in the complete linear system $|M|,$ given by the section $M$, whose strict transform on $Z$ has negative self-intersection.
\end{enumerate}
The remaining valuations $\nu_n$ will be called {\it non-special}.
\end{definition}
Let $\nu_n$ be a divisorial valuation of $\mathbb{F}_\delta$. We denote by $F_1$ the fiber which goes through the point $p$ and, when $\nu_n$ is non-special, by  $M_1$ the unique integral curve in $|M|$ whose strict transform on $Z$  has negative self-intersection.

\medskip

Next we introduce the so-called non-positive at infinity valuations of $\mathbb{F}_\delta$. For valuations in this  family, we will be able to compute the value $\hat{\mu}_D(\nu_n)$ for any big divisor $D$.

\begin{definition}\label{Def_NPI_spe_nspe_val}
Let $\nu_n$ be a special (respectively, non-special) divisorial valuation of $\mathbb{F}_\delta$. The valuation $\nu_n$ is called {\it non-positive at infinity} whenever $\nu_n(h)\leq 0$ for all $h\in\mathcal{O}_{\mathbb{F}_\delta}(\mathbb{F}_\delta\setminus (F_1\cup M_0))$ (respectively, $h\in\mathcal{O}_{\mathbb{F}_\delta}(\mathbb{F}_\delta\setminus (F_1\cup M_1))$). 
\end{definition}

As a consequence of \cite[Theorem 3.6]{GalMonMor}, it is sufficient to check the condition $2\nu_n(\varphi_{M_0})\nu_n(\varphi_{F_1})+\delta\nu_n(\varphi_{F_1})^2\geq \text{vol}^{-1}(\nu_n)$ (respectively, $2\nu_n(\varphi_{M_1})\nu_n(\varphi_{F_1})$ $-\delta\nu_n(\varphi_{F_1})^2$ $\geq \text{vol}^{-1}(\nu_n)$) to decide whether a special (respectively, non-special) divisorial valuation $\nu_n$ of $\mathbb{F}_\delta$ is non-positive at infinite. Moreover, under this assumption,   the cone of curves of the surface $Z$ defined by $\nu_n$ is generated by the classes of the strict transforms of the divisors $F_1,M_0,E_1,\ldots, E_n$ (respectively, $F_1,M_0,M_1,E_1,\ldots, E_n$).

\medskip

To conclude this section, we determine the mentioned Seshadri-type constant for any non-positive at infinity divisorial valuation and big divisor of a Hirzebruch surface $\mathbb{F}_\delta$. We also extract some consequences of this result.

\begin{theorem}\label{Thm_mupico_nonpositive}
Let $\nu_n$ be a non-positive at infinity divisorial valuation of the quotient field of $\mathcal{O}_{\mathbb{F}_\delta,p}$ centered at $\mathcal{O}_{\mathbb{F}_\delta,p}$ and $D\sim aF+bM$ a big divisor on $\mathbb{F}_\delta$. Then,  
\begin{itemize}
\item[(a)] If $\nu_n$ is special then it holds that $\hat{\mu}_D(\nu_n)=(a+b\delta)\nu_n(\varphi_{F_1})+b\nu_n(\varphi_{M_0}).$
\item[(b)] Otherwise, $
\hat{\mu}_D(\nu_n)=a\nu_n(\varphi_{F_1})+b\nu_n(\varphi_{M_1}).$
\end{itemize}
\end{theorem}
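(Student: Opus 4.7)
The plan is to apply Remark~\ref{Remark_muequalmupico}, which rephrases
$\hat{\mu}_D(\nu_n)$ as $\sup\{t\in\mathbb{Q}_+\mid D^{*}-tE_n \text{ is big on } Z\}$.
Since $Z$ is a surface, the big cone is the interior of the pseudoeffective cone,
which under the non-positive at infinity hypothesis coincides with the Mori
cone described in \cite[Theorem~3.6]{GalMonMor}: in the special (respectively,
non-special) case it is generated by $\tilde F_1,\tilde M_0,E_1,\dots,E_n$
(respectively, with the additional generator $\tilde M_1$). The strategy is to
expand $D^{*}$ in the basis of strict transforms of these generators, read off
the coefficient of $E_n$, and use convexity of the pseudoeffective cone to
identify this coefficient with the critical value.

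For part (a), write $D\sim(a+b\delta)F+bM_0$ so that
$D^{*}=(a+b\delta)F^{*}+bM_0^{*}$ in $\mathrm{Pic}(Z)$. Via the pullback formulas
$F^{*}=\tilde F_1+\sum_{p_i\in F_1^{(i-1)}}\bar E_i$ and
$M_0^{*}=\tilde M_0+\sum_{p_i\in M_0^{(i-1)}}\bar E_i$ (where $\bar E_i$ denotes
the total transform on $Z$ of the exceptional divisor of $\pi_i$), the identity
$\bar E_i=\sum_{j\ge i}\nu_j(\mathfrak m_i)E_j$, and Noether's formula
$\nu_k(\varphi_C)=\sum_i m_i^{C}\nu_k(\mathfrak m_i)$ applied to $C\in\{F_1,M_0\}$,
the coefficient of $E_k$ in $D^{*}$ in the basis $\tilde F_1,\tilde M_0,E_1,\dots,E_n$
equals $(a+b\delta)\nu_k(\varphi_{F_1})+b\nu_k(\varphi_{M_0})$. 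Setting $t_0$
equal to this for $k=n$ yields
\[
D^{*}-t_0 E_n=(a+b\delta)\tilde F_1+b\tilde M_0+
\sum_{i<n}\bigl[(a+b\delta)\nu_i(\varphi_{F_1})+b\nu_i(\varphi_{M_0})\bigr]E_i,
\]
a non-negative combination of Mori cone generators (since $a+b\delta>0$ and
$b>0$ for every big $D$ on $\mathbb{F}_\delta$) with vanishing coefficient on
the extremal $(-1)$-curve $E_n$. Hence $D^{*}-t_0E_n$ is pseudoeffective but
lies on a proper face of the Mori cone and thus fails to be big, giving
$\hat{\mu}_D(\nu_n)\le t_0$. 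Conversely, $D^{*}$ is big at $t=0$, and the
pseudoeffective cone being closed and convex forces the ray
$t\mapsto D^{*}-tE_n$ to exit it exactly once; this must happen at $t=t_0$ by
the previous step, so $D^{*}-tE_n$ lies in the interior---and hence is
big---for every $t<t_0$, giving the reverse inequality.

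Part (b) goes through in the same way after writing $D\sim aF+bM_1$: the same
chain of pullback/Noether identities, now applied to $F_1$ and $M_1$, produces
the $E_n$-coefficient $a\nu_n(\varphi_{F_1})+b\nu_n(\varphi_{M_1})$ of
$D^{*}$. The main obstacle in this case is that the Mori cone carries the
additional generator $\tilde M_1$ and is no longer simplicial, so
$D^{*}-tE_n$ admits an alternative non-negative decomposition via the
relation $\tilde M_1\sim\tilde M_0+\delta\tilde F_1+\sum_j e_jE_j$, and one
must verify that this alternative does not extend pseudoeffectivity past
$t_0$. I expect this to reduce to the inequality
$\nu_n(\varphi_{M_1})\ge\delta\nu_n(\varphi_{F_1})$ (equivalently, $e_n\le 0$),
which should follow from the explicit sufficient condition for non-positive
at infinity recorded after Definition~\ref{Def_NPI_spe_nspe_val} together with
the geometric constraint $\sum_i m_i^{M_1}m_i^{F_1}\le 1$ coming from
$M_1\cdot F_1=1$ on $\mathbb{F}_\delta$.
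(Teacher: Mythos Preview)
Your argument for part (a) is correct and takes a route genuinely different from the paper's. The paper argues dually: it intersects the strict transform $\tilde C$ of an arbitrary $C\in|mD|$ with the nef divisor $\Lambda_n=\nu_n(\varphi_{M_0})F^*+\nu_n(\varphi_{F_1})M^*-\sum_i\nu_n(\mathfrak m_i)E_i^*$ from \cite[Theorem~3.6]{GalMonMor} to obtain the upper bound $\nu_n(\varphi_C)/m\le t_0$, and then exhibits the effective curve $C_1=m(a+b\delta)F_1+mbM_0\in|mD|$ realizing it. You instead work primally, expanding $D^*$ in the basis of Mori-cone generators and reading off the $E_n$-coefficient; simpliciality of the cone (there are $n+2$ generators in an $(n+2)$-dimensional N\'eron--Severi space) makes the face argument immediate. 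Both proofs are short; the paper's dual method has the advantage that its upper-bound half transfers verbatim to the non-special case without any simpliciality hypothesis.

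Part (b) is a genuine gap, and your sketch does not close it. The inequality $\nu_n(\varphi_{M_1})\ge\delta\,\nu_n(\varphi_{F_1})$ you aim for is in fact true---it is exactly $\Delta_n\cdot\tilde M_0\ge0$ for the nef divisor $\Delta_n=(\nu_n(\varphi_{M_1})-\delta\nu_n(\varphi_{F_1}))F^*+\nu_n(\varphi_{F_1})M^*-\sum_i\nu_n(\mathfrak m_i)E_i^*$ of \cite[Theorem~4.8]{GalMonMor}---but your proposed justification via $M_1\cdot F_1=1$ is not the right mechanism, and you have not carried it out. Even granting $e_n\le0$, your face argument only delivers the \emph{upper} bound: it shows that no nonnegative representation of $D^*-t_0E_n$ can have positive $E_n$-coefficient, hence that class lies on the boundary. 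The pseudoeffectivity of $D^*-t_0E_n$, which you need for the convexity-based lower bound, is a separate issue; when $a<0$ the expansion $a\tilde F_1+b\tilde M_1+\cdots$ has a negative coefficient, and substituting via the relation $\tilde M_1=\tilde M_0+\delta\tilde F_1+\sum_je_jE_j$ forces the $E_n$-coefficient to become $\mu e_n\le0$, so the problem reappears. The paper sidesteps non-simpliciality altogether by intersecting with $\Delta_n$ for the upper bound and by exhibiting the curve $C_1=maF_1+mbM_1$ for the lower bound; note that this $C_1$ is only effective when $a\ge0$, so neither argument, as written, handles $a<0$.
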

\begin{proof}
For proving Statement (a) we assume that $p$ is a special point. When $p$ is a point of $\mathbb{F}_0$ (respectively, $p$ is general point), the proof is analogous by setting $\delta=0$ (respectively, $\nu_n(\varphi_{M_0})=0$). Let $C$ be a curve on $\mathbb{F}_\delta$ such that $C\in |mD|,$ where $m$ is a  positive integer, and denote by $\tilde{C}$ its strict transform on $Z$. By \cite[Theorem 3.6]{GalMonMor}, it holds that $\Lambda_n=\nu_n(\varphi_{M_0})F^*+\nu_n(\varphi_{F_1})M^*-\sum_{i=1}^n\nu_n(\mathfrak{m}_i)E_i^*$ is a nef divisor and then  $\Lambda_n\cdot \tilde{C}\geq 0$. This means that
$$
(a+b\delta)\nu_n(\varphi_{F_1})+b\nu_n(\varphi_{M_0})\geq \dfrac{\nu_n(\varphi_C)}{m}
$$
and, so,  we have found an upper bound for $\nu_n(\varphi_C)/m,$ where $C\in |mD|$ and $m$ a positive integer. Now, consider  the curve $C_1=m(a+\delta b)F_1 + mbM_0$, then  
$$
C_1\in |mD| \text{ and }\dfrac{\nu_n(\varphi_{C_1})}{m}=(a+\delta b)\nu_n(\varphi_{F_1})+b\nu_n(\varphi_{M_0}),
$$
which proves that the bound can be reached and Statement (a) holds. 

The proof of Statement (b) follows analogously by taking  the  divisor 
$$
\Delta_n=(\nu_n(\varphi_{M_1})-\delta\nu_n(\varphi_{F_1}))F^*+\nu_n(\varphi_{F_1})M^*-\sum_{i=1}^n\nu_n(\mathfrak{m}_i)E_i^*,
$$ 
which is nef by \cite[Theorem 4.8]{GalMonMor}, and the curve $C_1=maF_1+mbM_1$.
\end{proof}

\begin{corollary}\label{Cor_minimalcase}
Let $\nu_n$ be a non-positive at infinity divisorial valuation of $\mathbb{F}_\delta$ and $D\sim aF+bM$ a big and nef divisor on $\mathbb{F}_\delta$. Then, 
\begin{itemize}
\item[(a)] When $\nu_n$ is special, it is minimal with respect to $D$ if and only if $$2\nu_n(\varphi_{M_0})\nu_n(\varphi_{F_1})+\delta\nu_n(\varphi_{F_1})^2= \text{\emph{vol}}(\nu_n)^{-1}$$ and $a=b\nu_n(\varphi_{M_0})/\nu_n(\varphi_{F_1}).$ 
\item[(b)]Otherwise, $\nu_n$ is minimal with respect to $D$ if and only if $$2\nu_n(\varphi_{M_1})\nu_n(\varphi_{F_1})-\delta\nu_n(\varphi_{F_1})^2= \text{\emph{vol}}(\nu_n)^{-1}$$ and $a=b(\nu_n(\varphi_{M_1})-\delta\nu_n(\varphi_{F_1}))/\nu_n(\varphi_{F_1}).$
\end{itemize}  
\end{corollary}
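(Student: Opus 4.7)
My approach is to reduce the minimality condition $\hat{\mu}_D(\nu_n)^2 = \text{vol}(D)/\text{vol}(\nu_n)$ to a polynomial identity in $(a,b)$ and analyze it as a quadratic in the coefficient $a$ of $D\sim aF+bM$. First, I apply Theorem \ref{Thm_mupico_nonpositive} to get the explicit formula for $\hat{\mu}_D(\nu_n)$, and use the intersection numbers $F^2=0$, $F\cdot M=1$, $M^2=\delta$ together with nefness of $D$ to write $\text{vol}(D)=D^2=2ab+\delta b^2$. Abbreviating $v:=\text{vol}(\nu_n)$, $\alpha:=\nu_n(\varphi_{F_1})$, $\beta:=\nu_n(\varphi_{M_0})$ (case (a)) and $\gamma:=\nu_n(\varphi_{M_1})$ (case (b)), the minimality condition becomes an equation $F(a)=0$ for an explicit quadratic with positive leading coefficient $v\alpha^2$.

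For case (a), direct expansion gives
\[
F(a)=v\alpha^2 a^2+2b\bigl(v\alpha(\beta+\delta\alpha)-1\bigr)a+b^2\bigl(v(\beta+\delta\alpha)^2-\delta\bigr).
\]
The crucial step is computing the discriminant in $a$: the two $v^2\alpha^2(\beta+\delta\alpha)^2$ contributions cancel and what remains collapses to $4b^2\bigl(1-v(2\alpha\beta+\delta\alpha^2)\bigr)$. By the numerical criterion for non-positivity at infinity in the special case (recalled just before Theorem \ref{Thm_mupico_nonpositive}), one has $2\alpha\beta+\delta\alpha^2\ge 1/v$, so the discriminant is non-positive; this re-derives \eqref{Condition_mupico} directly and forces $F(a)=0$ to hold precisely when both the discriminant vanishes (yielding the first equality of (a)) and $a$ equals the resulting double root $a=b\bigl(1-v\alpha(\beta+\delta\alpha)\bigr)/(v\alpha^2)$. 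Substituting the active relation $2v\alpha\beta+v\delta\alpha^2=1$, the identity $v\alpha\beta=(1-v\delta\alpha^2)/2$ gives $1-v\alpha(\beta+\delta\alpha)=v\alpha\beta$, whence $a=b\beta/\alpha$, the second equality. The converse is immediate since the two equalities force $F(a)=0$ by construction.

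Case (b) follows the same template applied to $\hat{\mu}_D(\nu_n)=a\alpha+b\gamma$: the discriminant of the resulting quadratic simplifies to $4b^2\bigl(1-v(2\alpha\gamma-\delta\alpha^2)\bigr)$, matching the non-special non-positivity-at-infinity criterion, and vanishes precisely under the first claimed equality. The double root is $a=b(1-v\alpha\gamma)/(v\alpha^2)$, which the identity $1-v\alpha\gamma=v\alpha(\gamma-\delta\alpha)$ (extracted from $2v\alpha\gamma-v\delta\alpha^2=1$) simplifies to $a=b(\gamma-\delta\alpha)/\alpha$.

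The main obstacle is the discriminant simplification itself: verifying that the algebraic cancellations deliver exactly the quantity appearing in the non-positivity-at-infinity criterion, so that the hypothesis on $\nu_n$ dovetails perfectly with the degenerate regime of the quadratic. Beyond this identity the proof is pure substitution. A mild bookkeeping point concerns the degenerate subcases of (a) ($\delta=0$, or $p$ a general point so that $\beta=0$): these require no adjustment, since Theorem \ref{Thm_mupico_nonpositive} already encodes them with the convention $\nu_n(\varphi_{M_0})=0$.
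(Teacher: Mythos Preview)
Your proof is correct and follows essentially the same route as the paper: both apply Theorem~\ref{Thm_mupico_nonpositive} to write the minimality condition as a quadratic relation in $(a,b)$ and then use the non-positivity-at-infinity inequality to force both claimed equalities. The only cosmetic difference is that the paper decomposes $\hat{\mu}_D(\nu_n)^2-D^2\cdot\text{vol}(\nu_n)^{-1}$ directly as the sum of the perfect square $(a\nu_n(\varphi_{F_1})-b\nu_n(\varphi_{M_0}))^2$ and $D^2$ times the non-positivity margin, whereas you reach the same conclusion via the discriminant of the quadratic in $a$; the underlying algebra and logical structure are identical.
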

\begin{proof}
We will prove Item (a) in the case when $p_1$ is a special point; when $p_1\in\mathbb{F}_0$ (respectively, $p_1$ is a general point) the proof is analogous and follows taking $\delta=0$ (respectively, $\nu_n(\varphi_{M_0})=0$). A proof for Item (b) also runs similarly.

For a start, we are going to prove the minimality of $\nu$ under the conditions of the statement. Taking into account that $2\nu_n(\varphi_{M_0})\nu_n(\varphi_{F_1})+\delta\nu_n(\varphi_{F_1})^2= \text{vol}(\nu_n)^{-1}$, one obtains that 
\begin{align*}
\dfrac{\text{vol}(D)}{\text{vol}(\nu_n)}=&(2ab+b^2\delta)\delta\nu_n(\varphi_{F_1})^2+ 2b(a+b\delta)\nu_n(\varphi_{F_1})\nu_n(\varphi_{M_0}) + 2ab\nu_n(\varphi_{F_1})\nu_n(\varphi_{M_0})\\
=&(a+b\delta)^2\nu_n(\varphi_{F_1})^2+2b(a+b\delta)\nu_n(\varphi_{F_1})\nu_n(\varphi_{M_0}) + b^2\nu_n(\varphi_{M_0})^2\\
=&\hat{\mu}_{D}(\nu_n)^2,
\end{align*}
where the second equality holds since $(a\nu_n(\varphi_{F_1})-b\nu_n(\varphi_{M_0}))^2=0$, which proves that $\nu_n$ is minimal with respect to $D$.

Now assume that $\nu_n$ is minimal with respect to $D$. Then, by Theorem \ref{Thm_mupico_nonpositive}, it holds that
\begin{equation}\label{Cond_minimalconcoeff_a_b}
\left((a+b\delta)\nu_n(\varphi_{F_1})+b\nu_n(\varphi_{M_0})\right)^2=b(2a+\delta b)\text{vol}(\nu_n)^{-1}.
\end{equation}
On the other hand, one has the equality 
\begin{align*}
\left((a+b\delta)\nu_n(\varphi_{F_1})+b\nu_n(\varphi_{M_0})\right)^2=& \ (a\nu_n(\varphi_{F_1})-b\nu_n(\varphi_{M_0}))^2\\ & + b(2a+\delta b)(2\nu_n(\varphi_{F_1})\nu_n(\varphi_{M_0})+\delta\nu_n(\varphi_{F_1})^2),
\end{align*}
which together with Equality  \eqref{Cond_minimalconcoeff_a_b} gives rise to  
$$
(a\nu_n(\varphi_{F_1})-b\nu_n(\varphi_{M_0}))^2 + b(2a+\delta b)(2\nu_n(\varphi_{F_1})\nu_n(\varphi_{M_0})+\delta\nu_n(\varphi_{F_1})^2 - \text{vol}(\nu_n)^{-1})=0.
$$
Both addends of the above expression are not negative, so they must vanish. This completes the proof.

\end{proof}

\begin{corollary}\label{Cor_nonminimalresptanyD}
Let $\nu_n$ be a non-positive at infinity divisorial valuation of $\mathbb{F}_\delta$. Then, $\nu_n$ is non-minimal with respect to any big divisor $D$ on $\mathbb{F}_\delta$ whenever some of the following conditions holds:
\begin{itemize}
\item[(a)] $\nu_n$ is special and $2\nu_n(\varphi_{M_0})\nu_n(\varphi_{F_1})+\delta\nu_n(\varphi_{F_1})^2> \text{\emph{vol}}(\nu_n)^{-1}$. 
\item[(b)] $\nu_n$ is non-special and  $2\nu_n(\varphi_{M_1})\nu_n(\varphi_{F_1})-\delta\nu_n(\varphi_{F_1})^2> \text{\emph{vol}}(\nu_n)^{-1}$. 
\end{itemize}
\end{corollary}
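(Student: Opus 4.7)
The plan is to combine the exact formula for $\hat{\mu}_D(\nu_n)$ furnished by Theorem \ref{Thm_mupico_nonpositive} with the algebraic identity already used in the proof of Corollary \ref{Cor_minimalcase}, only now sharpening the resulting equality into a strict inequality $\hat{\mu}_D(\nu_n)^2 > \text{vol}(D)/\text{vol}(\nu_n)$; by inequality \eqref{Condition_mupico} this rules out minimality. I focus on case (a), as case (b) is strictly parallel, with $\nu_n(\varphi_{M_1})$ in place of $\nu_n(\varphi_{M_0})$, the sign of each $\delta\nu_n(\varphi_{F_1})^2$ term reversed, and the nef divisor $\Delta_n$ replacing $\Lambda_n$.

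Abbreviate $u := \nu_n(\varphi_{F_1})$, $v := \nu_n(\varphi_{M_0})$ and $V := \text{vol}(\nu_n)^{-1}$. By Theorem \ref{Thm_mupico_nonpositive}(a), $\hat{\mu}_D(\nu_n) = (a+b\delta)u + bv$. The identity
\[
((a+b\delta)u+bv)^2 = (au-bv)^2 + b(2a+\delta b)\,(2uv+\delta u^2)
\]
derived inside the proof of Corollary \ref{Cor_minimalcase} can be rewritten as
\[
\hat{\mu}_D(\nu_n)^2 - b(2a+\delta b)\,V \;=\; (au-bv)^2 \;+\; b(2a+\delta b)\,(2uv+\delta u^2 - V).
\]
If $D$ is nef, then $\text{vol}(D)=D^2 = b(2a+\delta b) > 0$, both summands on the right-hand side are nonnegative, and the second is strictly positive under hypothesis (a); hence $\hat{\mu}_D(\nu_n)^2 > \text{vol}(D)/\text{vol}(\nu_n)$, which precludes the minimality of $\nu_n$ with respect to $D$.

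If instead $D$ is big but not nef, then $\delta > 0$, $b > 0$, $-b\delta < a < 0$, and the Zariski decomposition recalled in Subsection \ref{Subsection_Seshadri-typeconstantNPI} gives $\text{vol}(D) = P_D^2 = (a+b\delta)^2/\delta$. Setting $c := a+b\delta > 0$, a direct expansion yields
\[
\delta\,\hat{\mu}_D(\nu_n)^2 - c^2 V \;=\; c^2(\delta u^2 - V) + 2cb\delta\,uv + \delta b^2 v^2.
\]
Recasting hypothesis (a) as $\delta u^2 - V > -2uv$ and multiplying by $c^2>0$, this right-hand side is strictly greater than $-2ac\,uv + \delta b^2 v^2$, which is nonnegative because $a < 0$, $c > 0$ and $u,v \geq 0$; hence once again $\hat{\mu}_D(\nu_n)^2 > \text{vol}(D)/\text{vol}(\nu_n)$.

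The main obstacle is precisely this non-nef case. The identity exploited in the nef step no longer delivers a manifestly positive expression once $2a+\delta b$ may be negative, so one has to substitute the Zariski-theoretic value $\text{vol}(D)=(a+b\delta)^2/\delta$ and perform a short regrouping in which both the strict form of the hypothesis and the sign of $a$ are needed in order to recover strict positivity.
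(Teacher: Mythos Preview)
Your proof is correct, and it takes a genuinely different route from the paper's. The paper introduces an auxiliary one-variable function
\[
q(x)=\frac{\hat{\mu}_D(\nu_n)^2}{P_D^2}\quad\text{evaluated at }x=a/b,
\]
defined piecewise on $(-\delta,0)$ and on $[0,\infty)$ to cover the non-nef and nef ranges respectively, and then argues by calculus that $q$ attains its global minimum at $x_1=\nu_n(\varphi_{M_0})/\nu_n(\varphi_{F_1})$ with value $2\nu_n(\varphi_{M_0})\nu_n(\varphi_{F_1})+\delta\nu_n(\varphi_{F_1})^2$; the strict hypothesis then forces $q(a/b)>\text{vol}(\nu_n)^{-1}$ for every big $D$. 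You instead recycle the algebraic identity from the proof of Corollary~\ref{Cor_minimalcase} to handle the nef case in one line, and treat the non-nef case by a separate direct expansion using $\text{vol}(D)=(a+b\delta)^2/\delta$ together with the sign of $a$. What your approach buys is transparency---the link to Corollary~\ref{Cor_minimalcase} is explicit and no minimization needs to be checked---whereas the paper's approach, once $q$ is set up, treats both regimes uniformly and makes it visually clear why the bound in the hypothesis is exactly the minimum possible value of $\hat{\mu}_D(\nu_n)^2/\text{vol}(D)$.
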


\begin{proof}
We begin by proving Item (a). We only need to show that 
$$
\hat{\mu}_D(\nu_n)^2/P_D^2>\overline{\beta}_{g+1}(\nu_n)
$$ 
holds for any big divisor $D\sim aF+bM$, $P_D$ being its positive part in the Zariski decomposition. 
Firstly, assume that $\delta>0$. Let $q:(-\delta,\infty)\cap\mathbb{Q}\to\mathbb{Q}_+$ be the map 
$$
q(x):=\left\lbrace\begin{array}{cc}
\dfrac{((x + \delta)\nu_n(\varphi_{F_1})+\nu_n(\varphi_{M_0}))^2}{((1/\delta)x+1)^2\delta}  \text{ if }x\in(-\delta,0)\cap\mathbb{Q}, \\[3mm]
\dfrac{((x + \delta)\nu_n(\varphi_{F_1})+\nu_n(\varphi_{M_0}))^2}{2x + \delta}  \text{ if }x\in[0,\infty)\cap\mathbb{Q}. \\
\end{array}\right.
$$ 
Notice that $q$ has an absolute minimum at the point $(x_1,q(x_1)),$ where $x_1=\nu_n(\varphi_{M_0})/\nu_n(\varphi_{F_1})$ and 
$$
q(x_1)=2\nu_n(\varphi_{M_0})\nu_n(\varphi_{F_1})+\nu_n(\varphi_{F_1})^2\delta.
$$ 
Since $q(a/b)=\hat{\mu}_D(\nu_n)^2/P_D^2$ (by Theorem \ref{Thm_mupico_nonpositive}) we have that
$$\hat{\mu}_D(\nu_n)^2/P_D^2\geq 2\nu_n(\varphi_{M_0})\nu_n(\varphi_{F_1})+\nu_n(\varphi_{F_1})^2\delta>\text{vol}(\nu_n)^{-1}=\overline{\beta}_{g+1}(\nu_n).$$

If $\delta=0$, by Theorem \ref{Thm_mupico_nonpositive} it holds that
$$\hat{\mu}_D(\nu_n)^2/P_D^2-2\nu_n(\varphi_{M_0})\nu_n(\varphi_{F_1})=(a\nu_n(\varphi_{F_1})-b\nu_n(\varphi_{M_0}))^2/P_{D}^2\geq 0.$$
Hence $\hat{\mu}_D(\nu_n)^2/P_D^2\geq 2\nu_n(\varphi_{M_0})\nu_n(\varphi_{F_1})>\text{vol}(\nu_n)^{-1}=\overline{\beta}_{g+1}(\nu_n).$

\medskip

To conclude, we notice that Item (b) can be proved following the same reasoning of the proof of Item (a), but considering the map $q_1:(-\delta,\infty)\cap\mathbb{Q}\to\mathbb{Q}_+,$
$$
q_1(x):=\left\lbrace\begin{array}{cc}
\dfrac{(\nu_n(\varphi_{F_1})x+\nu_n(\varphi_{M_1}))^2}{((1/\delta)x+1)^2\delta}  \text{ if }x\in(-\delta,0)\cap\mathbb{Q}, \\[3mm]
\dfrac{(\nu_n(\varphi_{F_1})x+\nu_n(\varphi_{M_1}))^2}{2x + \delta}  \text{ if }x\in[0,\infty)\cap\mathbb{Q}, \\
\end{array}\right.
$$
instead of $q$.
\end{proof}

\section{Newton-Okounkov bodies of non-positive at infinity valuations}\label{Sec_NOBFdelta}
Let $X$ be a smooth complex projective surface. A sequence 
$$
C_\bullet:=\{X \supset C \supset \{q\} \},
$$ 
where $C$ is an smooth irreducible curve on $X$ and $q$ a closed point of $C,$ is called a \emph{flag} of $X$. The point $q$ is the \emph{center} of $C_\bullet$. 

In this section we study the Newton-Okounkov bodies with respect to a flag 
\begin{equation}\label{Equ_FlagExceptionaldivisor}
E_\bullet:=\{Z=Z_r \supset E_r \supset \{p_{r+1}\}\},
\end{equation}
where $Z=Z_r$ is the surface defined by a finite simple sequence of blowing-ups as in \eqref{Eq_sequencepointblowingups} with $Z_0=\mathbb{F}_\delta$ and $E_r$ the last exceptional divisor created. We denote by $p_{r+1}$ the center of $E_\bullet$.

Flags of smooth varieties (not only surfaces) define and are defined by discrete valuations whose rank coincides with the dimension of the variety. In our case, they correspond  one-to-one with exceptional curve valuations $\nu$ (up to equivalence) whose configuration of infinitely near points $\mathcal{C}_\nu=\{p_i\}_{i=1}^\infty$ satisfies that the points  $\{p_i\}_{i=1}^r$ are given by the divisorial valuation $\nu_r$ defined by $E_r$ and the remaining points $p_i$, for $i>r$, are proximate to  $p_r$. If the point $p_{r+1}$ is satellite then there exists an exceptional divisor $E_\eta$ such that $\eta\neq r$ and $p_{r+1}\in E_\eta$. 

According to \cite[Section 3.2]{GalMonMoyNic2}, the flag valuation $\nu:=\nu_{E_\bullet},$ defined by $E_\bullet$,  satisfies that, for $f\in R=\mathcal{O}_{\mathbb{F}_\delta,p}$,  $\nu_{E_\bullet}(f)=(\upsilon_1(f),\upsilon_2(f))$ with $\upsilon_1(f)=\nu_r(f)$ and $
\upsilon_2(f):=\nu_\eta(f) + \sum_{p_i\to p_r} \text{mult}_{p_i}(f),$ where $\nu_\eta$ is the divisorial valuation defined by $E_\eta$. Up to equivalence of valuations, the value group of $\nu$ is $\mathbb{Z}^2$ and $\nu(\mathfrak{m}_{r})=(1,0)$ and $\nu(\mathfrak{m}_{r+1})=(0,1)$.

\begin{definition}\label{Def_spe_nonspe_except_val}
Let $\nu$ be a exceptional curve valuation of $\mathbb{F}_\delta$ and $D$ a big divisor on $\mathbb{F}_\delta$. The valuation $\nu$ is \emph{minimal with respect to $D$} whenever its first component $\nu_r$ is minimal with respect to $D$. The valuation $\nu$ is called \emph{special} (respectively, \emph{non-special}) when its first component $\nu_r$ is a special (respectively, non-special) divisorial valuation of $\mathbb{F}_\delta$. Analogously, $\nu$ is \emph{non-positive at infinity} whenever $\nu_r$ is non-positive at infinity.
\end{definition}

Newton-Okounkov bodies are non-empty convex and compact objects attached to flags and give very interesting geometric information \cite{LazMus,KavKho,BouKurMacSze}. The goal of this section is to explicitly compute the Newton-Okounkov bodies $\Delta_{\nu_{E_\bullet}}(D^*)$, where $E_\bullet$ is a flag as in \eqref{Equ_FlagExceptionaldivisor} corresponding to a non-positive at infinity exceptional curve valuation $\nu_{E_\bullet}$ and $D^*$ is the pull-back on $Z$ of a big divisor $D$ on $\mathbb{F}_\delta$. We start by defining the Newton-Okounkov body in our case. 
\begin{definition}\label{def_NObody}
Let $\nu$ be an exceptional curve valuation of $\mathbb{F}_\delta$ and $D$ a big divisor on $\mathbb{F}_\delta$. The \emph{Newton-Okounkov body of $D$ with respect to $\nu$} is defined as 
$$
\Delta_{\nu}(D):=\overline{\bigcup_{m\geq 1}\bigg\{\dfrac{\nu (f)}{m}\ | \ f\in H^0(\mathbb{F}_\delta,mD)\setminus \{0\}\bigg\}},
$$ 
where the upper line means the closed convex hull in $\mathbb{R}^2$.
\end{definition}
Notice that, if $E_\bullet$ is a flag as in (\ref{Equ_FlagExceptionaldivisor}) and $\nu=\nu_{E_\bullet}$, then $\Delta_{\nu}(D)=\Delta_{\nu_{E_\bullet}}(D^*)$. Moreover, the Newton-Okounkov body is  a  polygon (see \cite{KurLozMac}) and 
$$
\text{vol}(D)=\text{vol}_Z(D^*)=2\,\text{vol}_{\mathbb{R}^2}(\Delta_{\nu}(D)),
$$
where $\text{vol}_{\mathbb{R}^2}$ means Euclidean area (see \cite{LazMus}). 

Set $g+1$ (respectively, $g^*+1$) the number of generators of the semigroup of the divisorial valuation $\nu_r$ (respectively, exceptional curve valuation $\nu$). It holds that $g^*=g$ when $p_r$ and $p_{r+1}$ are satellite points. Otherwise, $g^*=g+1$. Denote by $S_\nu$ the semigroup of values of $\nu$, that is, the monoid
$$
S_\nu:=\{\nu(f)\ | \ f\in R\setminus \{0\}\}\subseteq \mathbb{Z}^2,
$$
endowed with the lexicographical ordering.  As mentioned, the set $S_\nu$ is ge\-ne\-ra\-ted by the set of pairs $\{\overline{\beta}_i(\nu)\}_{i=0}^{g^*}$ (respectively, $\{\overline{\beta}_i(\nu)\}_{i=0}^{g+1}$), where $\overline{\beta}_i(\nu)=(\overline{\beta}_i(\nu_r),\overline{\beta}_i(\nu_\eta))$ (respectively, $\overline{\beta}_i(\nu)=(\overline{\beta}_i(\nu_r),0)$ and $\overline{\beta}_{g+1}(\nu)=(\overline{\beta}_{g+1}(\nu_r),1)$), whenever $p_{r+1}$ is a satellite (respectively, free) point.  

Let $\mathfrak{C}(\nu)$ be the convex cone of $\mathbb{R}^2$ spanned by $S_\nu$ and $\mathfrak{H}_D(\nu)$ the half-plane 
$\{(x,y)\in\mathbb{R}^2 \ | \ x\leq \hat{\mu}_D(\nu_r)\}$. 
Then, the next result follows from Definitions \ref{def_NObody} and \ref{def_mupicoD} and \cite[Proposition 3.6]{GalMonMoyNic2}.

\begin{proposition}\label{Prop_coortriangle}
The set $\mathfrak{C}(\nu)\cap\mathfrak{H}_D(\nu)$ is a triangle, which contains the Newton-Okounkov body $\Delta_\nu(D),$ whose vertices are 
$$
(0,0),\ \ \left(\hat{\mu}_D(\nu_r),\dfrac{\hat{\mu}_D(\nu_r)\overline{\beta}_0(\nu_\eta)}{\overline{\beta}_0(\nu_r)}\right) \ \ \text{ and } \ \ \left(\hat{\mu}_D(\nu_r),\dfrac{\hat{\mu}_D(\nu_r)\overline{\beta}_{g^*}(\nu_\eta)}{\overline{\beta}_{g^*}(\nu_r)}\right)
$$
whenever $q=p_{r+1}$ is the satellite point $E_r\cap E_\eta$ (with $\eta\neq r$); and 
$$
(0,0),\ \ \left(\hat{\mu}_D(\nu_r),0\right) \ \ \text{ and } \ \ \left(\hat{\mu}_D(\nu_r),\dfrac{\hat{\mu}_D(\nu_r)}{\overline{\beta}_{g+1}(\nu_r)}\right),
$$
otherwise.
\end{proposition}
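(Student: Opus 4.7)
The proof splits naturally along the satellite/free dichotomy for $p_{r+1}$. In both cases, the plan is: identify the two extreme rays of the convex cone $\mathfrak{C}(\nu) \subseteq \mathbb{R}^2$ spanned by $S_\nu$, intersect the resulting wedge with the vertical strip $\{x \leq \hat{\mu}_D(\nu_r)\}$ to produce a triangle with the claimed vertices, and finally verify that this triangle contains $\Delta_\nu(D)$.

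In the free case the generators listed in the text are $\overline{\beta}_i(\nu) = (\overline{\beta}_i(\nu_r), 0)$ for $0 \leq i \leq g$ together with $\overline{\beta}_{g+1}(\nu) = (\overline{\beta}_{g+1}(\nu_r), 1)$, so all generators but the last lie on the positive $x$-axis. It follows immediately that $\mathfrak{C}(\nu)$ is the wedge bounded below by the positive $x$-axis and above by the ray through $(\overline{\beta}_{g+1}(\nu_r), 1)$. Intersecting with $\mathfrak{H}_D(\nu)$ yields the triangle with vertices $(0,0)$, $(\hat{\mu}_D(\nu_r), 0)$ and $(\hat{\mu}_D(\nu_r), \hat{\mu}_D(\nu_r)/\overline{\beta}_{g+1}(\nu_r))$, as claimed.

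In the satellite case the generators are $\overline{\beta}_i(\nu) = (\overline{\beta}_i(\nu_r), \overline{\beta}_i(\nu_\eta))$ for $0 \leq i \leq g$, all with strictly positive coordinates; the extreme rays of $\mathfrak{C}(\nu)$ are then the ones of minimum and maximum slope $\overline{\beta}_i(\nu_\eta)/\overline{\beta}_i(\nu_r)$. The key step is to show that these extreme slopes are attained at the endpoints $i = 0$ and $i = g$. This is a monotonicity property of the sequence of maximal contact values of an exceptional curve valuation whose configuration has center at $E_r \cap E_\eta$, and is exactly the combinatorial fact established in Proposition 3.6 of \cite{GalMonMoyNic2}; the argument there depends only on the proximity/continued-fraction structure at $p_{r+1}$ and not on the ambient surface, so it transports verbatim from $\mathbb{P}^2$ to $\mathbb{F}_\delta$. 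Intersecting the wedge thus obtained with the line $x = \hat{\mu}_D(\nu_r)$ then produces the triangle whose upper and lower vertices on that line are the two points given in the statement.

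Finally, the inclusion $\Delta_\nu(D) \subseteq \mathfrak{C}(\nu) \cap \mathfrak{H}_D(\nu)$ is automatic: for any nonzero $f \in H^0(\mathbb{F}_\delta, mD)$ one has $\nu(f) \in S_\nu \subseteq \mathfrak{C}(\nu)$, and $\nu_r(f) \leq \mu_{mD}(\nu_r)$ by Definition \ref{def_mupicoD}, so dividing by $m$ and taking the closure yields the $x$-coordinate bound $\hat{\mu}_D(\nu_r)$. Both membership conditions are preserved by closed convex hulls, giving the inclusion. The main (and really only) obstacle is the monotonicity-of-slopes lemma in the satellite case; beyond invoking the cited Proposition 3.6, an independent verification would use the fact that the pairs $(\overline{\beta}_i(\nu_r), \overline{\beta}_i(\nu_\eta))$ evolve via the same proximity pattern as the successive Hirzebruch--Jung continued fraction convergents attached to $p_{r+1}$, whose slopes form a monotone sequence between the two extremes given by $i=0$ and $i=g$.
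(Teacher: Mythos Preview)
Your proposal is correct and follows the same approach as the paper: the paper itself does not give a detailed proof but simply states that the result follows from Definitions \ref{def_NObody} and \ref{def_mupicoD} together with \cite[Proposition 3.6]{GalMonMoyNic2}. You have unpacked exactly these ingredients---the extreme rays of $\mathfrak{C}(\nu)$ via the cited Proposition 3.6 in the satellite case (and trivially in the free case), and the containment $\Delta_\nu(D)\subseteq\mathfrak{C}(\nu)\cap\mathfrak{H}_D(\nu)$ from the two definitions---so your argument is a faithful expansion of the paper's one-line justification.
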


Our next result determines the Newton-Okounkov bodies of minimal exceptional curve valuations of Hirzebruch surfaces.

\begin{theorem}\label{Thm_NOBminimalvaluation}
Let $\nu$ be an exceptional curve valuation of a Hirzebruch surface $\mathbb{F}_\delta$ and $D$ a big divisor on $\mathbb{F}_\delta$. Then, the Newton-Okounkov body $\Delta_\nu(D)$ coincides with the triangle $\mathfrak{C}(\nu)\cap \mathfrak{H}_D(\nu)$ if and only if $\nu$ is minimal with  respect to $D$.
\end{theorem}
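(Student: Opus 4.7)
The plan is to reduce the equivalence to a single area identity. By Proposition~\ref{Prop_coortriangle} the Newton-Okounkov body is always contained in the triangle $T:=\mathfrak{C}(\nu)\cap\mathfrak{H}_D(\nu)$. Since $D$ is big, $\Delta_\nu(D)$ and $T$ are both closed convex subsets of $\mathbb{R}^2$ with positive Euclidean area and one is contained in the other, so they coincide if and only if $\text{vol}_{\mathbb{R}^2}(\Delta_\nu(D)) = \text{vol}_{\mathbb{R}^2}(T)$. The area formula $\text{vol}(D)=2\,\text{vol}_{\mathbb{R}^2}(\Delta_\nu(D))$ recalled after Definition~\ref{def_NObody} then turns this into an equality of numerical invariants attached to $D$ and $\nu$.

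The core computation is to check that, in both the free and the satellite cases of Proposition~\ref{Prop_coortriangle},
\[
2\,\text{vol}_{\mathbb{R}^2}(T) = \hat{\mu}_D(\nu_r)^2\,\text{vol}(\nu_r).
\]
In the free case $T$ is a right triangle with legs $\hat{\mu}_D(\nu_r)$ and $\hat{\mu}_D(\nu_r)/\overline{\beta}_{g+1}(\nu_r)$, and the claim follows from the identification $\text{vol}(\nu_r)=1/\overline{\beta}_{g+1}(\nu_r)$ recalled in Subsection~\ref{Subsection_Seshadri-typeconstantNPI}. In the satellite case, the shoelace formula applied to the three listed vertices gives
\[
2\,\text{vol}_{\mathbb{R}^2}(T) = \hat{\mu}_D(\nu_r)^2\left(\frac{\overline{\beta}_0(\nu_\eta)}{\overline{\beta}_0(\nu_r)} - \frac{\overline{\beta}_g(\nu_\eta)}{\overline{\beta}_g(\nu_r)}\right),
\]
so it suffices to verify
\[
\frac{\overline{\beta}_0(\nu_\eta)}{\overline{\beta}_0(\nu_r)} - \frac{\overline{\beta}_g(\nu_\eta)}{\overline{\beta}_g(\nu_r)} = \frac{1}{\overline{\beta}_{g+1}(\nu_r)},
\]
which is a telescoping identity for the cross-determinants $\overline{\beta}_i(\nu_\eta)\overline{\beta}_{i+1}(\nu_r)-\overline{\beta}_{i+1}(\nu_\eta)\overline{\beta}_i(\nu_r)$ along the common portion of the dual graphs of $\nu_r$ and $\nu_\eta$; this is the $\mathbb{F}_\delta$-analogue of the relation implicit in the $\mathbb{P}^2$-case treated in~\cite{GalMonMoyNic2}.

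Assembling the two area formulas, $\Delta_\nu(D)=T$ becomes equivalent to $\text{vol}(D)=\hat{\mu}_D(\nu_r)^2\,\text{vol}(\nu_r)$; extracting the positive square root, whose sign is forced by the universal lower bound~\eqref{Condition_mupico}, this reads $\hat{\mu}_D(\nu_r)=\sqrt{\text{vol}(D)/\text{vol}(\nu_r)}$, which is exactly the minimality of $\nu_r$, and hence by definition of $\nu$, with respect to $D$. The main obstacle is the satellite case of the area computation: the reasoning is purely combinatorial, but it requires tracking the precise indices $0$, $g$, $g+1$ and exploiting the fact that the satellite hypothesis $p_{r+1}\in E_r\cap E_\eta$ is exactly what produces the cancellation of the intermediate $\overline{\beta}$-ratios.
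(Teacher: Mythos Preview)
Your argument is correct and follows the same strategy as the paper: both establish the containment $\Delta_\nu(D)\subseteq T$ via Proposition~\ref{Prop_coortriangle}, compute the area of $T$ as $\hat{\mu}_D(\nu_r)^2/(2\overline{\beta}_{g+1}(\nu_r))$, and conclude by comparing with $\text{vol}(D)/2$ using~\eqref{Condition_mupico}. The only difference is that the paper obtains the triangle area by directly invoking \cite[Lemma~3.9]{GalMonMoyNic2}, whereas you sketch its derivation via the shoelace formula; note that your displayed identity $\overline{\beta}_0(\nu_\eta)/\overline{\beta}_0(\nu_r)-\overline{\beta}_g(\nu_\eta)/\overline{\beta}_g(\nu_r)=1/\overline{\beta}_{g+1}(\nu_r)$ holds only up to sign (the sign depends on whether $r\preccurlyeq\eta$ or not), so strictly speaking an absolute value is needed there.
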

\begin{proof}
Proposition \ref{Prop_coortriangle} and \cite[Lemma 3.9]{GalMonMoyNic2} prove that the Newton-Okounkov body $\Delta_\nu(D)$ is contained in the triangle $\mathfrak{C}(\nu)\cap \mathfrak{H}_D(\nu)$ whose area is 
$$
\hat{\mu}_D(\nu_r)^2/2\overline{\beta}_{g+1}(\nu_r).$$
Taking into account that  
$$
\dfrac{\hat{\mu}_D(\nu_r)^2}{2\overline{\beta}_{g+1}(\nu_r)}\geq\dfrac{\text{vol}(D)}{2}
$$
by inequality \eqref{Condition_mupico}, it holds that the triangle $\mathfrak{C}(\nu)\cap \mathfrak{H}_D(\nu)$ will coincide with the Newton-Okounkov body $\Delta_\nu(D)$ when both figures have the same area, which is true only when $\nu$ is minimal with respect to $D$.   
\end{proof}

From now on suppose that $\nu$ is an exceptional curve valuation of $\mathbb{F}_\delta$ which is non-minimal with  respect to a big divisor $D\sim aF+bM$.

When $\delta=0,$ $D$ is also nef. Otherwise ($\delta\neq 0$), $D^*$ may be big but not nef. In this last case the positive and negative parts of the Zariski decomposition of $D^*$ are
$$P_{D^*}\sim \left(b+\dfrac{a}{\delta}\right)M^*\text{ and }N_{D^*}=\dfrac{-a}{\delta}\tilde{M}_0+\sum_{i=0}^{i_{M_0}}\dfrac{-a\nu_i(\varphi_{M_0})}{\delta} E_i,$$
where $i_{M_0}$ indicates the last point in $C_\nu$ through which the strict transform of $M_0$ passes. Then we distinguish two cases.

Case 1. $p_{r+1}$ belongs to the support of $N_{D^*},$ $\text{supp}(N_{D^*}),$ of the negative part of the Zariski decomposition of the divisor $D^*$. This fact holds if and only if $g^*=1,p_1$ is a special point, all the points in $\{p_i\}_{i=1}^{r+1}$ are free, $i_{M_0}=r+1$ and $D$ is big and not nef. %\edz{Case 1? CJ:Sí, están todas las condiciones. Aunque el iff se tiene por la desc. Zar. de $D^*$ (está escrita en el caso 2).}

Case 2. $p_{r+1}\not\in\text{supp}(N_{D^*}).$ In this case, to compute $\Delta_\nu(D)$ we can assume that $D$ is nef because  this assumption does not produce loss of generality.  Indeed, if the divisor $D$ is big but not nef, then $b> 0$ and $-b\delta < a < 0$, and, as  $p_{r+1}\not\in\text{supp} (N_{D^*})$, by \cite[Lemma 1.10]{KurLoz1}, it holds that 
$$
\Delta_{\nu}(D)=\Delta_{\nu}(P_D)=\left(b+\dfrac{a}{\delta}\right)\Delta_{\nu}(M).
$$

Notice that, $\text{vol}(D)=D^2$ and Inequality  \eqref{Condition_mupico} can be written as 
\begin{equation}\label{Cond_valoracionminimalD2beta}
\hat{\mu}_D(\nu_r)\geq \sqrt{D^2\overline{\beta}_{g+1}(\nu_r)}
\end{equation}
if $p_{r+1}\not\in\text{supp} (N_{D^*})$. Otherwise, we will replace $D$ by $P_D$.
\medskip

In the following subsections we will explicitly get the Newton-Okounkov body of divisors $D$ with respect to non-positive at infinity $\nu$. We start with special va\-lua\-tions, where the case $p_{r+1}\in\text{supp}(N_{D^*})$ might happen.

\subsection{Newton-Okounkov bodies with respect to non-positive at infi\-nity special valuations}\label{Subsec_NOofNPIvalspec}

Along this section $D\sim aF+bM$ will be a big divisor on $\mathbb{F}_\delta$ and $\nu$ a non-positive at infinity special exceptional curve valuation of $\mathbb{F}_\delta$ whose first component is $\nu_r$. Recall that $\nu$ is not minimal with respect to $D$.

The symbol $\theta_1^r(D)$ will stand for $a\nu_r(\varphi_{F_1})-b\nu_r(\varphi_{M_0}),$ where $F_1$ is the fiber which passes through $p$ and $M_0$ the special section. When $\theta_1^r(D)=0,$
it holds that either $a=b\nu_r(\varphi_{M_0})/\nu_r(\varphi_{F_1})$, or $\nu_r(\varphi_{M_0})=0$ and $a=0$. Notice that, in the second case, some objects that we will introduce are not defined and we will avoid using them. Moreover, if $p_{r+1}\in \text{supp}(N_{D^*}),$ then $\theta_1^r(D)$ is always negative.

We start by stating two lemmas which allow us to compute the Zariski decomposition of some key divisors.

\begin{lemma}\label{Lemma_partepositivanef_casoespecial}
Let $\nu_r$ be a non-positive at infinity special  divisorial valuation of $\mathbb{F}_\delta$ and $D$ a big and nef divisor on $\mathbb{F}_\delta$. Let also  $\theta_1^r(D)$ be as in the above paragraphs. Then, the divisor 
$$
D_1=D^*-\dfrac{b}{\nu_r(\varphi_{F_1})}\sum_{i=1}^r\nu_r(\mathfrak{m}_i)E_i^* \left(\!\!\text{respectively, } D_2=D^*-\dfrac{a}{\nu_r(\varphi_{M_0})}\sum_{i=1}^r\nu_r(\mathfrak{m}_i)E_i^*\right)
$$
is nef when $\theta_1^r (D)\geq 0$ (respectively, $\theta_1^r (D)<0$). 
\end{lemma}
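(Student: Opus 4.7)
The plan is to realise each of the divisors $D_{1}$ and $D_{2}$ as an explicit non-negative $\mathbb{Q}$-linear combination of divisors already known to be nef on $Z$; nefness of $D_{1}$ and $D_{2}$ is then automatic. The crucial ingredient is the divisor
\[
\Lambda_{r}=\nu_{r}(\varphi_{M_0})\,F^{*}+\nu_{r}(\varphi_{F_1})\,M^{*}-\sum_{i=1}^{r}\nu_{r}(\mathfrak{m}_{i})\,E_{i}^{*},
\]
which is nef by \cite[Theorem 3.6]{GalMonMor} precisely because $\nu_{r}$ is non-positive at infinity; this is the same divisor already used in the proof of Theorem \ref{Thm_mupico_nonpositive}(a). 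In addition, $F^{*}$ and $M^{*}$ are pull-backs of the nef generators $F$ and $M$ of the nef cone of $\mathbb{F}_{\delta}$, and so are nef on $Z$.

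A direct check, obtained by matching coefficients on $F^{*}$, $M^{*}$ and on each $E_{i}^{*}$, yields the identities
\[
D_{1}=\frac{\theta_{1}^{r}(D)}{\nu_{r}(\varphi_{F_1})}\,F^{*}+\frac{b}{\nu_{r}(\varphi_{F_1})}\,\Lambda_{r}
\]
and, assuming $\nu_{r}(\varphi_{M_0})>0$,
\[
D_{2}=\frac{-\theta_{1}^{r}(D)}{\nu_{r}(\varphi_{M_0})}\,M^{*}+\frac{a}{\nu_{r}(\varphi_{M_0})}\,\Lambda_{r}.
\]
Since $a,b\ge 0$ (because $D$ is nef) and the relevant denominators are positive, the first identity presents $D_{1}$ as a non-negative combination of nef divisors exactly when $\theta_{1}^{r}(D)\ge 0$, while the second does the same for $D_{2}$ when $\theta_{1}^{r}(D)<0$. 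The lemma follows.

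The main conceptual step is spotting that the divisor $\Lambda_{r}$ furnished by the non-positive-at-infinity hypothesis is \emph{exactly} what is needed to absorb the exceptional correction in $D_{1}$ and $D_{2}$, leaving behind a non-negative multiple of $F^{*}$ in one case and of $M^{*}$ in the other; once this is noticed, the verification reduces to a one-line coefficient match and there is no need to test intersection numbers against the generators of the cone of curves of $Z$. The only remaining subtlety is the degenerate case flagged before the lemma, namely $\nu_{r}(\varphi_{M_0})=0$: since $D$ is nef we have $a\ge 0$, so $\theta_{1}^{r}(D)=a\,\nu_{r}(\varphi_{F_1})\ge 0$, and the hypothesis $\theta_{1}^{r}(D)<0$ under which $D_{2}$ would be needed is vacuous, so one never has to invoke the undefined divisor $D_{2}$.
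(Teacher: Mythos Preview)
Your proof is correct and follows essentially the same approach as the paper's: both decompose $D_{1}$ as a non-negative $\mathbb{Q}$-combination of $F^{*}$ and $\Lambda_{r}$ (the paper writes this as $\frac{b}{\nu_{r}(\varphi_{F_1})}\bigl(\frac{\theta_{1}^{r}(D)}{b}F^{*}+\Lambda_{r}\bigr)$, which is identical to your expression), and the paper then says the case of $D_{2}$ ``runs similarly,'' which is precisely the decomposition via $M^{*}$ and $\Lambda_{r}$ that you spell out. Your treatment is in fact a little more complete, since you make explicit the $D_{2}$ decomposition and address the degenerate case $\nu_{r}(\varphi_{M_0})=0$; the only cosmetic point is that your displayed identities should be linear equivalences ($\sim$) rather than equalities, since $D^{*}\sim aF^{*}+bM^{*}$.
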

\begin{proof}
We will prove that $D_1$ is nef. A proof for $D_2$ runs similarly. As $b$ is a positive integer, one can obtain that   
\begin{align*}
D_1 & = D^*-\dfrac{b}{\nu_r(\varphi_{F_1})}\sum_{i=1}^r\nu_r(\mathfrak{m}_i)E_i^*\\
& \sim \dfrac{b}{\nu_r(\varphi_{F_1})}\left( \dfrac{a\nu_r(\varphi_{F_1})}{b}F^*+\nu_r(\varphi_{F_1})M^*- \sum_{i=1}^r\nu(\mathfrak{m}_i)E_i^*\right)\\
& = \dfrac{b}{\nu_r(\varphi_{F_1})}\left(\dfrac{\theta_1^r(D)}{b}F^* + \Lambda_r\right),
\end{align*}
where $\Lambda_r=\nu_r(\varphi_{M_0})F^*+\nu_r(\varphi_{F_1})M^* - \sum_{i=1}^r\nu_r(\mathfrak{m}_i)E_i^*$. The divisors $F^*$ and $\Lambda_r$ are nef by \cite[Theorem 3.6]{GalMonMor} and then $D_1$ is also a nef divisor since $\theta_1^r(D)$ is non-negative.
\end{proof}

\begin{lemma}\label{Lemma_t_ibelongstoT_casoespecial}
Let $\nu_r$ be a non-positive at infinity special  divisorial valuation of $\mathbb{F}_\delta$ and $Z$ the surface that it defines. Consider a big and nef divisor $D\sim aF+bM$ and write 
$\theta_1^r (D):=a\nu_r(\varphi_{F_1})-b\nu_r(\varphi_{M_0})$. Then, the following four rational numbers:   
\[
t_1=\dfrac{b}{\nu_r(\varphi_{F_1})}\overline{\beta}_{g+1}(\nu_r),\ \ t_2=\dfrac{b}{\nu_r(\varphi_{F_1})}\overline{\beta}_{g+1}(\nu_r)+\theta_1^r (D), \ \ 
\]
\[
t_3=\dfrac{a}{\nu_r(\varphi_{M_0})}
\overline{\beta}_{g+1}(\nu_r)\text{ and } 
t_4=\dfrac{(a+b\delta)\overline{\beta}_{g+1}(\nu_r) - \theta_1^r (D)\nu_r(\varphi_{M_0})}{\nu_r(\varphi_{M_0})+\delta \nu_r(\varphi_{F_1})}
\]
satisfy that $t_1$ and $t_2$ (respectively, $t_3$ and $t_4$) belong to the set 
$$
T_{D,\nu_r}:=\{t \in\mathbb{Q}\ | \ 0\leq t\leq \hat{\mu}_D(\nu_r)\}
$$  
when $\theta^r_1(D)\geq 0$ (respectively, $\theta^r_1(D)<0$). In addition, if $p_{r+1}\in\text{\emph{supp}}(N_{D^*}),$ then $-a\nu_r(\varphi_{M_0})/\delta < t_4\leq \hat{\mu}_{D}(\nu_r)$.
\end{lemma}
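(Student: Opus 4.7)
The plan is to verify, for each of the four rationals, the two inequalities defining membership in $T_{D,\nu_r}$, namely $0 \le t_i$ and $t_i \le \hat{\mu}_D(\nu_r)$, using the following three ingredients:
(i) By Theorem~\ref{Thm_mupico_nonpositive}(a), $\hat{\mu}_D(\nu_r) = (a+b\delta)\nu_r(\varphi_{F_1}) + b\nu_r(\varphi_{M_0})$;
(ii) The non-positive at infinity hypothesis, equivalent by the remark preceding Theorem~\ref{Thm_mupico_nonpositive} to $\overline{\beta}_{g+1}(\nu_r) = \mathrm{vol}(\nu_r)^{-1} \le 2\nu_r(\varphi_{M_0})\nu_r(\varphi_{F_1}) + \delta\nu_r(\varphi_{F_1})^2$;
(iii) Since $D$ is big and nef, $a \ge 0$, $b > 0$, $a+b\delta > 0$, while $\nu_r(\varphi_{F_1}) > 0$ and $\nu_r(\varphi_{M_0}) \ge 0$ (and $\nu_r(\varphi_{M_0}) > 0$ whenever $\theta_1^r(D)<0$, since otherwise $\theta_1^r(D)=a\nu_r(\varphi_{F_1})\ge 0$).

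Non-negativity is immediate from the sign conventions. Under $\theta_1^r(D)\ge 0$, $t_1$ is a product of positive quantities and $t_2 = t_1 + \theta_1^r(D) \ge t_1 \ge 0$. Under $\theta_1^r(D)<0$, $t_3$ is again a product of non-negative quantities, and in the numerator of $t_4$ the term $-\theta_1^r(D)\nu_r(\varphi_{M_0})$ is strictly positive while $(a+b\delta)\overline{\beta}_{g+1}(\nu_r)\ge 0$, with $\nu_r(\varphi_{M_0})+\delta\nu_r(\varphi_{F_1})>0$ in the denominator.

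For the upper bounds I would substitute the explicit form of $\hat{\mu}_D(\nu_r)$ from (i), clear denominators, and reduce to an algebraic inequality that can be split into a contribution controlled by the NPI inequality (ii) and a contribution controlled by the sign of $\theta_1^r(D)$. Concretely: for $t_1\le \hat{\mu}_D(\nu_r)$, after multiplying by $\nu_r(\varphi_{F_1})$ and subtracting $b(2\nu_r(\varphi_{M_0})\nu_r(\varphi_{F_1})+\delta\nu_r(\varphi_{F_1})^2)$ using (ii), what remains is exactly $\theta_1^r(D)\nu_r(\varphi_{F_1}) \ge 0$. For $t_2$, cancelling $\theta_1^r(D)$ on both sides reduces the inequality precisely to (ii). For $t_3\le \hat{\mu}_D(\nu_r)$, after clearing $\nu_r(\varphi_{M_0})$ and applying (ii), the residual inequality factors as $-\theta_1^r(D)\bigl(\nu_r(\varphi_{M_0})+\delta\nu_r(\varphi_{F_1})\bigr)\ge 0$, which holds because $\theta_1^r(D)<0$.

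The main obstacle, such as it is, will be the calculation for $t_4$, which is the one with the most elaborate numerator. I expect that after clearing the denominator $\nu_r(\varphi_{M_0})+\delta\nu_r(\varphi_{F_1})$ and applying (ii) to the term $(a+b\delta)\overline{\beta}_{g+1}(\nu_r)$, all terms involving $\delta\nu_r(\varphi_{F_1})^2$ cancel, and after dividing by $\nu_r(\varphi_{M_0})$ and substituting the definition of $\theta_1^r(D)$, the remaining expression simplifies to the identity $b\delta\nu_r(\varphi_{F_1})+b\nu_r(\varphi_{M_0}) = b\nu_r(\varphi_{M_0})+b\delta\nu_r(\varphi_{F_1})$. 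Thus the $t_4$-bound is tight modulo the NPI inequality, reflecting that $t_4$ is designed to match a vertex on the boundary $x=\hat{\mu}_D(\nu_r)$. Once these four case analyses are written out, the lemma follows.
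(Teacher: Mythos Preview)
Your argument is correct. The algebraic verification you outline goes through cleanly in all four cases, including the $t_4$ case where (after clearing the denominator and using (ii)) both sides reduce to $af+2b\delta f+bm$ with $f=\nu_r(\varphi_{F_1})$, $m=\nu_r(\varphi_{M_0})$, so the inequality is exactly (ii).

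Your route differs from the paper's in one respect worth noting. For $t_1$ the paper does \emph{not} invoke the explicit formula of Theorem~\ref{Thm_mupico_nonpositive}; instead it argues geometrically: the divisor $D_1$ of Lemma~\ref{Lemma_partepositivanef_casoespecial} is nef, so $D_1\cdot\tilde C\ge 0$ for $C\in|mD|$ yields $\hat{\mu}_D(\nu_r)\le \nu_r(\varphi_{F_1})D^2/b$, and this is combined with the general volume bound $\hat{\mu}_D(\nu_r)^2\ge D^2\,\overline{\beta}_{g+1}(\nu_r)$ of \eqref{Cond_valoracionminimalD2beta} to get $\hat{\mu}_D(\nu_r)\ge t_1$. (For $t_2$ the paper does essentially what you do.) Your uniform approach is more elementary and shorter, since once Theorem~\ref{Thm_mupico_nonpositive} is available there is no need for the nef-divisor/volume detour; the paper's argument for $t_1$, on the other hand, would survive in situations where only the inequality \eqref{Cond_valoracionminimalD2beta} and the nefness of $D_1$ are known, without an exact formula for $\hat{\mu}_D(\nu_r)$.
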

\begin{proof}
We will show that $t_1,t_2\leq \hat{\mu}_D(\nu_r)$. A proof for the other cases runs similarly.

Let us prove that $t_1\leq \hat{\mu}_D(\nu_r)$ when $\theta_1^r(D)\geq 0$. By Lemma \ref{Lemma_partepositivanef_casoespecial}, it holds that the divisor $D_1=D^*-\frac{b}{\nu_r(\varphi_{F_1})}\sum_{i=1}^r\nu_r(\mathfrak{m}_i)E_i^*$ is nef and then, for any curve $C\in|mD|$, $m$ being a positive integer, one has that  
$$
 m(2ab+b^2\delta)-\dfrac{b}{\nu_r(\varphi_{F_1})}\nu_r(\varphi_C)=D_1\cdot \tilde{C}\geq 0,
$$
where $\tilde{C}$ is the strict transform of $C$ under the birational map defined by $\nu_r$. This shows that 
$$
2ab+b^2\delta\geq \dfrac{b}{\nu_r(\varphi_{F_1})}\hat{\mu}_D(\nu_r),
$$
which together with Inequality  \eqref{Cond_valoracionminimalD2beta} allow us to deduce the inequalities 
$$
\hat{\mu}_D(\nu_r)\geq \dfrac{D^2\overline{\beta}_{g+1}(\nu_r)}{\hat{\mu}_D(\nu_r)}=\dfrac{(2ab+b^2\delta)\overline{\beta}_{g+1}(\nu_r)}{\hat{\mu}_D(\nu_r)}\geq \dfrac{b\overline{\beta}_{g+1}(\nu_r)}{\nu_r(\varphi_{F_1})},
$$
which prove our statement.

To finish the proof, we will see that $t_2\leq \hat{\mu}_D(\nu_r)$ when $\theta_1^r(D)\geq 0$. By Theorem  \ref{Thm_mupico_nonpositive}, it suffices to prove the inequality 
$$
b(2\nu_r(\varphi_{M_0})\nu_r(\varphi_{F_1})+\delta\nu_r(\varphi_{F_1})^2)\geq b\overline{\beta}_{g+1}(\nu_r),
$$
which holds by \cite[Theorem 3.6]{GalMonMor} after noticing that $b$ is a positive integer.
\end{proof}

\begin{remark}
Theorem \ref{Thm_mupico_nonpositive} and Corollary \ref{Cor_minimalcase} prove that  
$$
\hat{\mu}_D(\nu_r)=b\overline{\beta}_{g+1}(\nu_r)/\nu_r(\varphi_{F_1})=t_1=t_2\,(=t_3=t_4,\text{ when }\nu_r(\varphi_{M_0})\neq 0)
$$
whenever the valuation $\nu_r$ is minimal with respect to $D$.
 
Otherwise, Lemma  \ref{Lemma_t_ibelongstoT_casoespecial} provides two values, $t_1$ and $t_2$ (respectively, $t_3$ and $t_4$) when $\theta_1^r(D)\geq 0$ (respectively, $\theta_1^r(D)<0$)  and $D$ is big and nef. If $\theta_1^r(D)=0$, then $\hat{\mu}_D(\nu_r)>t_1=t_2\,(=t_3=t_4,\text{ when }\nu_r(\varphi_{M_0})\neq 0)$, and when $\delta>0, a=0$ and $\theta_1^r(D)<0,$ then $t_3=0$. Moreover, if $2\nu_r(\varphi_{M_0})\nu_r(\varphi_{F_1})+\delta\nu_r(\varphi_{F_1})^2 = \overline{\beta}_{g+1}(\nu_r)$ holds, we obtain that $t_2=\hat{\mu}_D(\nu_r)$ (respectively, $t_4=\hat{\mu}_D(\nu_r)$) whenever $\theta_1^r(D)>0$ (respectively, $\theta_1^r(D)<0$). Finally, if $p_{r+1}\in\text{supp}(N_{D^*})$, the value $t_4$ defined in Lemma \ref{Lemma_t_ibelongstoT_casoespecial} satisfies $t_4=\hat{\mu}_D(\nu_r)$ when $2\nu_r(\varphi_{M_0})\nu_r(\varphi_{F_1})+\delta\nu_r(\varphi_{F_1})^2 = \overline{\beta}_{g+1}(\nu_r).$
\end{remark}

\begin{lemma}\label{Lemma_Ntdeterminesmatrixnegativedefinite_casoespecial}
Let $\nu_r$ be a special divisorial valuation of $\mathbb{F}_\delta$ and $D$ a big and nef divisor. Suppose also that $\nu_r$ is non-minimal with respect to $D$. Let $\nu_i$ be  the divisorial valuation defined by the exceptional divisor $E_i$, $1\leq i \leq r-1$. Then, the intersection matrices determined by the families of divisors $\{\tilde{F}_1,E_1,\ldots,E_{r-1}\}$ and $\{\tilde{M}_0,E_1,\ldots,E_{r-1}\}$ are negative definite. In addition, when $p_{r+1}\in\text{\emph{supp}} (N_{D^*}),$ $\{\tilde{M}_0,E_1,\ldots,E_{r-1}\}$ also determines a negative definite intersection matrix. Notice that in this last case, $D$ is big and not nef.
\end{lemma}
\begin{proof}
Consider the divisor $D_1$ defined in Lemma \ref{Lemma_partepositivanef_casoespecial}. We showed that it is nef, let us see that it is also big. Indeed, 
$$
D_1^2=D^2 - \dfrac{b^2\overline{\beta}_{g+1}(\nu_r)}{\nu_r(\varphi_{F_1})^2}\geq D^2 -\dfrac{b\hat{\mu}_{D}(\nu_r)}{\nu_r(\varphi_{F_1})}\left(\dfrac{D^2\overline{\beta}_{g+1}(\nu_r)}{\hat{\mu}_D(\nu_r)^2}\right)> D^2-\dfrac{b\hat{\mu}_D(\nu_r)}{\nu_r(\varphi_{F_1})}\geq 0,
$$
where the second inequality holds since $\nu_r$ is non-minimal with respect to $D$ and the last one by the proof of Lemma \ref{Lemma_t_ibelongstoT_casoespecial}. So, $D_1$ is a big divisor by \cite[Theorem 2.2.16]{Laz1}. Finally, the facts that $D_1\cdot \tilde{F}_1=0$ and $D_1\cdot E_i=0$ for $1\leq i \leq r-1$ prove our statement for $\{\tilde{F}_1,E_1,\ldots,E_{r-1}\}$ by Lemma 4.3 of \cite{BauKurSze}. The remaining cases can be proved analogously either with the divisor $D_2$ in Lemma \ref{Lemma_partepositivanef_casoespecial} or with the nef and big divisor $(b+a/\delta)M^*$. 
\end{proof}

Our next result gives the positive part and the negative  part of the Zariski decomposition of certain divisors which will be useful.

\begin{proposition}\label{Prop_Zardecomp_casoespecial}
Let $\nu_r$ be a non-positive at infinity special divisorial valuation of $\mathbb{F}_\delta$, $Z$ the surface defined by $\nu_r$ and $\nu_i$ the divisorial valuation defined by the exceptional divisor $E_i$, $1\leq i \leq r-1$. Set $D\sim aF+bM$ a big and nef divisor on $\mathbb{F}_\delta$ and suppose that $\nu_r$ is non-minimal with respect to $D$. Write $\theta_1^r (D)=a\nu_r(\varphi_{F_1})-b\nu_r(\varphi_{M_0})$ and $\Lambda_r=\nu_r(\varphi_{M_0})F^*+\nu_r(\varphi_{F_1})M^*-\sum_{i=1}^r\nu_r(\mathfrak{m}_i)E_i^*$. Consider the divisors $D_1$ and $D_2$  in Lemma \ref{Lemma_partepositivanef_casoespecial} and the rational numbers $t_1,t_2,t_3$ and $t_4$ given in Lemma  \ref{Lemma_t_ibelongstoT_casoespecial}. Then, 

\begin{itemize}
\item[(a)]Assuming $\theta_1^r(D)\geq 0$, the positive and negative parts of the Zariski decomposition of the divisors $D_{t_1}:=D^* -t_1E_r$, and $D_{t_2}:=D^*-t_2E_r$ are 
\begin{equation*}
\begin{array}{c}
P_{D_{t_1}}\sim D_1 \ \text{ and } \  N_{D_{t_1}} = \dfrac{b}{\nu_r(\varphi_{F_1})}\displaystyle\sum_{i=1}^{r-1}\nu_r(\varphi_i)E_i, \\[4mm]
\text{and } 
P_{D_{t_2}}\sim \dfrac{b}{\nu_r(\varphi_{F_1})}\Lambda_r  \text{ and } \\[4mm]
N_{D_{t_2}}= \dfrac{\theta_1^r(D)}{\nu_r(\varphi_{F_1})}\tilde{F}_1+\displaystyle\sum_{i=1}^{r-1} \dfrac{b\nu_r(\varphi_i) + \theta_1^r(D)\nu_i(\varphi_{F_1}) }{\nu_r(\varphi_{F_1})}E_i.
\end{array}
\end{equation*}
\item[(b)] When $\theta_1^r(D)<0$, the positive and negative parts of the Zariski decomposition of $D_{t_3}:=D^* -t_3E_r$, and $D_{t_4}:=D^*-t_4E_r$ are 
\begin{equation*}
\begin{array}{c}
P_{D_{t_3}}\sim D_2 \ \text{ and } \  N_{D_{t_3}} = \dfrac{a}{\nu_r(\varphi_{M_0})}\displaystyle\sum_{i=1}^{r-1}\nu_r(\varphi_i)E_i,\\[4mm] 
\text{and } P_{D_{t_4}}\sim \dfrac{a+b\delta}{\nu_r(\varphi_{M_0})+\delta \nu_r(\varphi_{F_1})}\Lambda_r  \text{ and }\\[4mm] 
N_{D_{t_4}}= \left(\dfrac{-\theta_1^r(D)}{\nu_r(\varphi_{M_0})+\delta\nu_r(\varphi_{F_1})}\right)\tilde{M}_0\\[4mm]
+\displaystyle\sum_{i=1}^{r-1}\dfrac{(a+b\delta)\nu_r(\varphi_i)-\theta_1^r(D)\nu_i(\varphi_{M_0})}{\nu_r(\varphi_{M_0})+\delta\nu_r(\varphi_{F_1})}E_i,
\end{array}
\end{equation*}
\end{itemize}

Moreover, if $p_{r+1}\in\text{\emph{supp}} (N_{D^*}),$ then the positive and negative parts of $D_{t_4}$ are the divisors $P_{D_{t_4}}$ and $N_{D_{t_4}}$ described before.
\end{proposition}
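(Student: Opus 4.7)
The plan is to invoke uniqueness of the Zariski decomposition: for each of the four divisors $D_{t_i}$ it suffices to exhibit candidates $P$ and $N$ satisfying (i) $P + N \sim D_{t_i}$, (ii) $P$ nef, (iii) the support of $N$ has negative definite intersection matrix, and (iv) $P \cdot C = 0$ for every component $C$ of $N$. Case (b) is structurally identical to (a) with $F$, $D_1$, $\varphi_{F_1}$ replaced respectively by $M_0$, $D_2$, $\varphi_{M_0}$, so I would concentrate the verification on (a).

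Properties (ii)--(iv) come essentially for free. Nefness of $D_1$ is Lemma~\ref{Lemma_partepositivanef_casoespecial}, and positive multiples of $\Lambda_r$ are nef by \cite[Theorem~3.6]{GalMonMor}. The proposed supports of $N$ sit inside $\{\tilde F_1, E_1, \ldots, E_{r-1}\}$, which is negative definite by Lemma~\ref{Lemma_Ntdeterminesmatrixnegativedefinite_casoespecial}; non-negativity of the coefficients is immediate from $\theta_1^r(D) \geq 0$, $b > 0$, and the non-negativity of all values $\nu_r(\varphi_i)$, $\nu_i(\varphi_{F_1})$. The orthogonality (iv) reduces to $\Lambda_r \cdot E_i = 0$ for $1 \leq i \leq r-1$, which drops out of the proximity equalities by expanding $E_i = E_i^* - \sum_{p_k \to p_i,\,k>i} E_k^*$ and using $E_j^* \cdot E_k^* = -\delta_{jk}$, together with the direct computation $\Lambda_r \cdot \tilde F_1 = 0$ and the identity $D_1 \cdot E_i = 0$ already recorded in the proof of Lemma~\ref{Lemma_Ntdeterminesmatrixnegativedefinite_casoespecial}.

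The substantive work is (i). For $D_{t_1}$ it collapses to the change-of-basis identity
\[
\sum_{i=1}^r \nu_r(\mathfrak{m}_i)\, E_i^* \;=\; \sum_{i=1}^{r-1} \nu_r(\varphi_i)\, E_i \;+\; \overline{\beta}_{g+1}(\nu_r)\, E_r,
\]
which I would verify by intersecting both sides with each $E_j^*$ and comparing coefficients via the proximity equalities and $\overline\beta_{g+1}(\nu_r) = \nu_r(\varphi_r)$. For $D_{t_2}$ I would bootstrap from the $D_{t_1}$ decomposition using $D_{t_2} = D_{t_1} - \theta_1^r(D)\, E_r$, the rewriting $D_1 \sim \tfrac{b}{\nu_r(\varphi_{F_1})}\Lambda_r + \tfrac{\theta_1^r(D)}{\nu_r(\varphi_{F_1})} F^*$, and the expansion $F^* = \tilde F_1 + \sum_j \mathrm{mult}_{p_j}(F_1)\, E_j^*$. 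Converting the resulting $E_j^*$-sum back to the strict-transform basis via the analogous identity $\sum_j \mathrm{mult}_{p_j}(F_1)\, E_j^* = \sum_{i=1}^r \nu_i(\varphi_{F_1})\, E_i$ then produces the stated coefficients $\bigl(b\nu_r(\varphi_i) + \theta_1^r(D)\,\nu_i(\varphi_{F_1})\bigr)\big/ \nu_r(\varphi_{F_1})$.

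The bookkeeping for that last identity (and its $M_0$-analog in (b)) is the main obstacle I anticipate. I would prove both by intersecting with each $E_k^*$ and invoking the general recursion $\nu_k(\varphi) = \mathrm{mult}_{p_k}(\varphi) + \sum_{p_k \to p_l,\, l<k} \nu_l(\varphi)$, which relates the value of a curve germ under the divisorial valuation $\nu_k$ to its multiplicity sequence along the configuration. Once these identities are in place, case (b) follows by the symmetric argument with $\tilde M_0$ in place of $\tilde F_1$ and $\nu_r(\varphi_{M_0}) + \delta\,\nu_r(\varphi_{F_1})$ in place of $\nu_r(\varphi_{F_1})$.
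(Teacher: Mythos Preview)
Your proposal is correct and follows essentially the same route as the paper: verify the four Zariski-decomposition axioms for each $D_{t_i}$, invoking Lemma~\ref{Lemma_partepositivanef_casoespecial} and \cite[Theorem~3.6]{GalMonMor} for nefness, Lemma~\ref{Lemma_Ntdeterminesmatrixnegativedefinite_casoespecial} for negative-definiteness, the proximity equalities for orthogonality, and the two change-of-basis identities for the linear equivalence $P+N\sim D_{t_i}$. The paper's computation for $D_{t_2}$ is in fact exactly your bootstrap via $D_{t_2}=D_{t_1}-\theta_1^r(D)E_r$ together with the identity $\sum_{i=1}^{i_{F_1}}E_i^*=\sum_{i=1}^{r}\nu_i(\varphi_{F_1})E_i$, which is your $\sum_j \mathrm{mult}_{p_j}(F_1)E_j^*=\sum_i \nu_i(\varphi_{F_1})E_i$ specialized to the smooth fiber $F_1$; your plan to verify these identities by pairing with each $E_k^*$ is more explicit than what the paper records.
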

\begin{proof}
We only prove Statement (a) since a similar proof can be given for  the remaining cases. Let us start with the decomposition of $D_{t_1}.$ It is clear that $P_{D_{t_1}}+N_{D_{t_1}}\sim D_{t_1}$. Also $P_{D_{t_1}}$ is nef, by  Lemma \ref{Lemma_partepositivanef_casoespecial}, and orthogonal  to each component of $N_{D_{t_1}}$, by the proximity equalities.  This concludes the proof after taking into account that the components of $N_{D_{t_1}}$ determine an intersection matrix which is negative definite.

Finally, we prove the claim for the divisor $D_{t_2}$. By \cite[Proposition 3.3 and Theorem 3.6]{GalMonMor},  $P_{D_{t_2}}$ is nef and  orthogonal to each component of $N_{D_{t_2}}$. As well, it follows from Lemma \ref{Lemma_Ntdeterminesmatrixnegativedefinite_casoespecial} that the intersection matrix determined by the components of $N_{D_{t_2}}$ is negative definite.  To conclude, summing the following two expressions: 
$$
D-\dfrac{b}{\nu_r(\varphi_{F_1})}\overline{\beta}_{g+1}(\nu_r)E_r\sim \dfrac{b}{\nu_r(\varphi_{F_1})}\Lambda_r + \dfrac{\theta_1^r(D)}{\nu_r(\varphi_{F_1})}F^*+\dfrac{b}{\nu_r(\varphi_{F_1})}\sum_{i=1}^{r-1}\nu_r(\varphi_i)E_i 
$$
and 
$$
-\theta_1^r(D)E_r = \dfrac{\theta_1^r(D)}{\nu_r(\varphi_{F_1})}\left(\sum_{i=1}^{r-1}\nu_i(\varphi_{F_1})E_i-\sum_{i=1}^{i_{F_1}}E_i^*\right),
$$
and taking into account that $\tilde{F}_1\sim F^*-\sum_{i=1}^{i_{F_1}}E_i^*,$ where $i_{F_1}$ indicates the last point in the configuration of infinitely near points $\mathcal{C}_{\nu_r}$ of the valuation $\nu_r$ through which the strict transform of $F_1$ goes, we get $D_{t_2}\sim P_{D_{t_2}}+N_{D_{t_2}}$, which completes the proof.
\end{proof}

Next, we are going to state the main results in this subsection. Recall that $\nu$ is a special exceptional curve valuation whose first component is $\nu_r,$ which is non-positive at infinity and non-minimal with respect to a big divisor $D\sim aF+bM$.

Our results determine the coordinates of the vertices of the Newton-Okounkov bodies $\Delta_\nu(D).$ We divide our study in three cases.

Case A: Either $g^*>1$, or $g^*=1,$ $\nu(\varphi_{F_1})\neq \overline{\beta}_1(\nu)$ and $\nu(\varphi_{M_0})\neq \overline{\beta}_1(\nu)$.

Case B: The value $g^*$ equals $1$ and $\nu(\varphi_{F_1})=\overline{\beta}_1(\nu)$.

Case C: The value $g^*$ equals $1$ and $\nu(\varphi_{M_0})=\overline{\beta}_1(\nu)$.
\medskip

We start with  Case A. Here we can assume that $D$ is also nef (see the paragraph below Theorem \ref{Thm_NOBminimalvaluation}). According with \cite[Theorem 6.4]{LazMus}, by Remark \ref{Remark_muequalmupico}, the Newton-Okounkov body $\Delta_\nu(D)$ coincides with the set 
$$
\{(t,y)\in\mathbb{R}^2\ | \ 0\leq t\leq \hat{\mu}_D(\nu_r) \text{ and } \alpha(t)\leq y \leq \beta (t)\},
$$
where, for all $t\in [0,\hat{\mu}_D(\nu_r)]$, $\alpha(t):=\text{ord}_{p_{r+1}}(N_{D_t}|_{E_r})$ and $\beta(t):=\alpha(t)+ P_{D_t}\cdot E_r;$ here $P_{D_t}$ and $N_{D_t}$ are respectively the positive and negative parts of the divisor $D_{t}=D^*-tE_r$. As a consequence, by Proposition \ref{Prop_Zardecomp_casoespecial}, the points 
\begin{equation*}
\begin{array}{c}
Q_1=\left(\dfrac{b\overline{\beta}_{g+1}(\nu_r)}{\nu_r(\varphi_{F_1})},\dfrac{b\nu_r(\varphi_\eta)}{\nu_r(\varphi_{F_1})}\right) \left(\text{respectively, } Q_1=\left(\dfrac{b\overline{\beta}_{g+1}(\nu_r)}{\nu_r(\varphi_{F_1})},0\right)\right),\\[4mm]
Q_2=Q_1+\left(0, \dfrac{b}{\nu_r(\varphi_{F_1})}\right),\\[4mm]
Q_3=\left(\dfrac{b\overline{\beta}_{g+1}(\nu_r)}{\nu_r(\varphi_{F_1})}+\theta_1^r(D),\dfrac{b\nu_r(\varphi_\eta)+\theta_1^r(D)\nu_\eta(\varphi_{F_1})}{\nu_r(\varphi_{F_1})}\right)\\[4mm]
\left(\text{respectively, } Q_3=\left(\dfrac{b\overline{\beta}_{g+1}(\nu_r)}{\nu_r(\varphi_{F_1})}+\theta_1^r(D),0\right)\right)\text{ and } Q_4=Q_3 + \left(0, \dfrac{b}{\nu_r(\varphi_{F_1})}\right)\\[4mm]
\end{array}
\end{equation*}
are in $\Delta_\nu(D)$ whenever $\theta_1^r(D)\geq0$ and the point $p_{r+1}\in E_\eta\cap E_r$ is satellite (respectively, free). When  $\theta_1^r(D)<0$ and the point $p_{r+1}\in E_\eta\cap E_r$ is satellite (respectively, free), the points are 
\begin{equation*}
\begin{array}{c}
Q_5=\left(\dfrac{a\overline{\beta}_{g+1}(\nu_r)}{\nu_r(\varphi_{M_0})},\dfrac{a\nu_r(\varphi_\eta)}{\nu_r(\varphi_{M_0})}\right)\left(\text{respectively, } Q_5=\left(\dfrac{a\overline{\beta}_{g+1}(\nu_r)}{\nu_r(\varphi_{M_0})},0\right)\right),\\[4mm]
Q_6=Q_5+\left(0, \dfrac{a}{\nu_r(\varphi_{M_0})}\right),\\[4mm]
Q_7=\left(\dfrac{(a+b\delta)\overline{\beta}_{g+1}(\nu_r)-\theta_1^r(D)\nu_r(\varphi_{M_0})}{\nu_r(\varphi_{M_0})+\delta\nu_r(\varphi_{F_1})},\dfrac{(a+b\delta)\nu_r(\varphi_\eta)-\theta_1^r(D)\nu_\eta(\varphi_{M_0})}{\nu_r(\varphi_{M_0})+\delta\nu_r(\varphi_{F_1})}\right)\\[4mm]
\left(\text{respectively, }Q_7=\left(\dfrac{(a+b\delta)\overline{\beta}_{g+1}(\nu_r)-\theta_1^r(D)\nu_r(\varphi_{M_0})}{\nu_r(\varphi_{M_0})+\delta\nu_r(\varphi_{F_1})},0\right)\right)\\[4mm]
\text{ and } Q_8=Q_7 + \left(0, \dfrac{a+b\delta}{\nu_r(\varphi_{M_0})+\delta\nu_r(\varphi_{F_1})}\right).\\[4mm]
\end{array}
\end{equation*}
Notice that $Q_5$ and $Q_6$ may not be well-defined when $\theta_1^r(D)=0$, in fact this happens if $a=\nu_r(\varphi_{M_0})=0$. 

By definition, it also holds that the point $Q_9=(\hat{\mu}_D(\nu_r),\hat{\mu}_D(\nu_\eta))$ (respectively, $Q_9=(\hat{\mu}_D(\nu_r),0)$) when $p_{r+1}$ is satellite (respectively, free) belongs to $\Delta_\nu(D)$. By Theorem \ref{Thm_mupico_nonpositive}, we are able to compute explicitly this point. Now, we state our first main result where we use the symbol $\preccurlyeq$ defined in Section \ref{Subsec_Hirz_mupico}.

\begin{theorem}\label{Thm_NOBodies_specialcase}
Let $\nu$ be a valuation in Case A. With notations as in the previous two paragraphs, the Newton-Okoun\-kov body $\Delta_\nu(D)$ of $D$ with respect to $\nu$ is a quadrilateral if and only if $a\neq 0$ and $\theta_1^r(D)\neq 0$. Otherwise, it is a triangle.

The vertices of the quadrilateral are
\begin{itemize}
\item[(a)] $(0,0),Q_1,Q_3$ (respectively, $Q_5,Q_7$) and $Q_9$ when $\theta_1^r(D)> 0$ (respectively, $\theta_1^r(D)<0$), $p_{r+1}$ is the satellite point $E_\eta\cap E_r$ and $r\not\preccurlyeq \eta.$
\item[(b)] $(0,0),Q_2,Q_4$ (respectively, $Q_6,Q_8$) and $Q_9$ when $\theta_1^r(D)> 0$ (respectively, $\theta_1^r(D)<0$), $p_{r+1}$ is the satellite point $E_\eta\cap E_r$ and $r\preccurlyeq \eta.$
\item[(c)] $(0,0),Q_2,Q_4$ (respectively, $Q_6,Q_8$) and $Q_9$ when $\theta_1^r(D)> 0$ (respectively, $\theta_1^r(D)<0$) and $p_{r+1}$ is a free point.
\end{itemize}

When $\delta >0, a=0$ and $\theta_1^r(D)<0$, $Q_5=Q_6=(0,0)$ and the vertices of the triangle $\Delta_\nu(D)$ are as described in items (a), (b) and (c).

Finally, replacing $\theta_1^r(D)>0$ (or $\theta_1^r(D)<0$) with $\theta_1^r(D)=0$ in items (a), (b) and (c) we obtain the vertices of the triangle $\Delta_\nu(D)$ because $Q_1=Q_3\,(=Q_5=Q_7,\text{ when }\nu(\varphi_{M_0})\neq 0)$ in Case (a) and $Q_2=Q_4\,(=Q_6=Q_8,$ $\text{ when }\nu(\varphi_{M_0})\neq 0)$ otherwise.
\end{theorem}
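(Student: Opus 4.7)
\emph{Approach.} The plan is to apply the Lazarsfeld--Musta\c{t}\u{a} slice description already recalled in Remark~\ref{Remark_muequalmupico}: writing $D_t:=D^*-tE_r$ and setting $\alpha(t)=\mathrm{ord}_{p_{r+1}}(N_{D_t}|_{E_r})$, $\beta(t)=\alpha(t)+P_{D_t}\cdot E_r$, one has
$$\Delta_\nu(D)=\{(t,y)\in\mathbb{R}^2\,|\,0\le t\le \hat\mu_D(\nu_r),\ \alpha(t)\le y\le \beta(t)\}.$$
Since $\alpha$ is convex piecewise linear and $\beta$ concave piecewise linear, with breakpoints exactly at the values of $t$ where the support of $N_{D_t}$ jumps, it suffices to locate these breakpoints on $[0,\hat\mu_D(\nu_r)]$ and read off the values $\alpha(t_i)$, $\beta(t_i)$ at them.

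\emph{Step 1: breakpoints and Zariski data.} I would show that the only interior breakpoints are $t_1<t_2$ when $\theta_1^r(D)\ge 0$, and $t_3<t_4$ when $\theta_1^r(D)<0$, the rational numbers of Lemma~\ref{Lemma_t_ibelongstoT_casoespecial}. At $t=0$ the divisor $D^*$ is nef so $N_{D_0}=0$, and Proposition~\ref{Prop_Zardecomp_casoespecial} furnishes $N_{D_{t_i}}$ explicitly at each $t_i$. Between consecutive breakpoints the support of the negative part is fixed, so uniqueness of the Zariski decomposition---enforced by negative-definiteness of the associated intersection matrix, Lemma~\ref{Lemma_Ntdeterminesmatrixnegativedefinite_casoespecial}---forces the decomposition to vary linearly in~$t$. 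Concretely, for $\theta_1^r(D)\ge 0$: on $[0,t_1]$ one has $N_{D_t}=(t/t_1)N_{D_{t_1}}$, with fixed support $\{E_1,\ldots,E_{r-1}\}$; on $[t_1,t_2]$ a new component $\tilde F$ grows linearly from $0$ up to $(\theta_1^r(D)/\nu_r(\varphi_{F_1}))\tilde F$; and on $[t_2,\hat\mu_D(\nu_r)]$ the positive part stays proportional to $\Lambda_r$, so $P_{D_t}\cdot E_r$ drops linearly to $0$ at $t=\hat\mu_D(\nu_r)$. The case $\theta_1^r(D)<0$ is handled identically, using Proposition~\ref{Prop_Zardecomp_casoespecial}(b) with $\tilde M_0$ in place of $\tilde F$.

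\emph{Step 2: vertex identification.} At each breakpoint $t_i$, the value $\alpha(t_i)=\mathrm{ord}_{p_{r+1}}(N_{D_{t_i}}|_{E_r})$ is computed by deciding which components of $N_{D_{t_i}}$ meet $E_r$ at $p_{r+1}$. If $p_{r+1}$ is free, no such component passes through $p_{r+1}$, so $\alpha(t_i)=0$; if $p_{r+1}=E_\eta\cap E_r$ is satellite, then $E_\eta$ itself contributes at $p_{r+1}$, and it lies in the support of $N_{D_{t_i}}$ precisely when $r\not\preccurlyeq\eta$. This dichotomy produces the split between items (a) and (b). Substituting the coefficients from Proposition~\ref{Prop_Zardecomp_casoespecial} into $\alpha(t_i)$ and $\beta(t_i)=\alpha(t_i)+P_{D_{t_i}}\cdot E_r$ reproduces the listed points $Q_1,\ldots,Q_8$ verbatim, while Theorem~\ref{Thm_mupico_nonpositive} yields the right-hand vertex $Q_9$.

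\emph{Degeneracies and main obstacle.} The quadrilateral collapses to a triangle exactly when two consecutive breakpoints coincide: $\theta_1^r(D)=0$ forces $t_1=t_2$ (resp.\ $t_3=t_4$), so $Q_1=Q_3$ (resp.\ $Q_5=Q_7$); and $\delta>0$, $a=0$, $\nu_r(\varphi_{M_0})=0$ further collapse $Q_5=Q_6$ to the origin. Conversely, $a\ne 0$ together with $\theta_1^r(D)\ne 0$ guarantees four distinct vertices. The main obstacle is the proximity bookkeeping in the satellite case, because $\alpha(t_i)$ depends subtly on whether the exceptional divisor $E_\eta$ carrying $p_{r+1}$ appears in the support of $N_{D_{t_i}}$; the Spivakovsky order condition $r\preccurlyeq\eta$ is precisely what controls this, and once it is used to separate the two sub-cases, the remainder of the argument is a direct substitution from Proposition~\ref{Prop_Zardecomp_casoespecial}.
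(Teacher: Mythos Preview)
Your overall strategy---compute $\alpha(t),\beta(t)$ via the Zariski decomposition of $D_t$ and read off the vertices---is sound in principle, but the argument as written has a genuine gap in Step~2, and it differs from the paper's route.

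\textbf{The error in Step 2.} You claim that in the satellite case $p_{r+1}=E_\eta\cap E_r$, the divisor $E_\eta$ ``lies in the support of $N_{D_{t_i}}$ precisely when $r\not\preccurlyeq\eta$,'' and that this produces the split between (a) and (b). This is false: since $\eta\in\{1,\dots,r-1\}$, the curve $E_\eta$ \emph{always} appears in $N_{D_{t_1}}=\frac{b}{\nu_r(\varphi_{F_1})}\sum_{i=1}^{r-1}\nu_r(\varphi_i)E_i$, so in the satellite case one always has $\alpha(t_1)=\frac{b\,\nu_r(\varphi_\eta)}{\nu_r(\varphi_{F_1})}\neq 0$ and $(t_1,\alpha(t_1))=Q_1$, $(t_1,\beta(t_1))=Q_2$. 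Both $Q_1$ and $Q_2$ lie on $\partial\Delta_\nu(D)$; the condition $r\preccurlyeq\eta$ versus $r\not\preccurlyeq\eta$ does \emph{not} decide which one is computed---it decides which one is a genuine corner and which one sits on an edge. That distinction is a collinearity statement: one must show (as the paper does, using the identities $\nu_r(\varphi_\eta)=\overline\beta_{g+1}(\nu_r)\,\overline\beta_0(\nu_\eta)/\overline\beta_0(\nu_r)$ and $\nu_r(\varphi_\eta)+1=\overline\beta_{g+1}(\nu_r)\,\overline\beta_g(\nu_\eta)/\overline\beta_g(\nu_r)$ coming from the dual-graph combinatorics) that $Q_1,Q_3$ lie on the line through $(0,0)$ and $Q_9$ when $r\preccurlyeq\eta$, while $Q_2,Q_4$ lie on that line when $r\not\preccurlyeq\eta$. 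Your proposal never carries out this collinearity check, so the vertex identification is not established.

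\textbf{A second gap and the paper's fix.} Your Step~1 asserts that on $[t_2,\hat\mu_D(\nu_r)]$ the positive part ``stays proportional to $\Lambda_r$,'' i.e.\ that no new curve (for instance $\tilde M_0$) enters the negative part past $t_2$. Proposition~\ref{Prop_Zardecomp_casoespecial} only gives the decomposition at $t_1$ and $t_2$, and you offer no argument controlling the support beyond $t_2$. The paper avoids this entirely: it does not compute the Zariski decomposition on $(t_2,\hat\mu_D(\nu_r))$ at all. Instead it takes the six points $(0,0),Q_1,Q_2,Q_3,Q_4,Q_9$ (already known to lie in $\Delta_\nu(D)$ by the slice description at $t=0,t_1,t_2,\hat\mu_D(\nu_r)$), computes the Euclidean area of their convex hull directly, obtains $D^2/2=\mathrm{vol}_{\mathbb{R}^2}(\Delta_\nu(D))$, and concludes that this convex hull \emph{is} $\Delta_\nu(D)$. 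Only then does it perform the collinearity analysis above to discard the two redundant points. This area argument is what replaces your unproven control of the Zariski chamber past $t_2$.
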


\begin{proof}
First we will show that $D^2/2$ is the area of the convex sets $\Delta$ and $\Delta'$ defined respectively by the sets of points $\{(0,0),Q_1,Q_2,Q_3,Q_4,Q_9\}$ and $\{(0,0),Q_5,Q_6,$ $Q_7,Q_8,Q_9\}$.

Let us start with $\Delta$. The area of the triangle $(0,0),Q_1$ and $Q_2$ (respectively, $Q_3,Q_4$ and $Q_9$) is 
$$
\dfrac{b^2\overline{\beta}_{g+1}(\nu_r)}{2\nu_r(\varphi_{F_1})^2} 
\text{ \bigg(respectively, } \dfrac{b}{2\nu_r(\varphi_{F_1})}
\Big(\hat{\mu}_D(\nu_r)-\big(\dfrac{b}{\nu_r(\varphi_{F_1})}\overline{\beta}_{g+1}(\nu_r) + \theta_1^r(D)\big)\Big)
\bigg).
$$
The area of the parallelogram $Q_1,Q_2,Q_3$ and $Q_4$ is $\frac{b}{\nu_r(\varphi_{F_1})}\theta_1^r(D).$ Thus, the area of $\Delta$ will be the sum of the above areas, which is 
\begin{align*}
\dfrac{2ab+b^2\delta}{2}= \dfrac{D^2}{2}.
\end{align*}

With respect to $\Delta'$, we have to sum the area of the triangles with vertices $(0,0),Q_5$ and $Q_6$, and  $Q_7,Q_8$ and $Q_9$, with the area of a trapezium  whose vertices are  $Q_5,Q_6,Q_7$ and $Q_8$. The areas of the triangles are equal to $\frac{a^2}{2\nu_r(\varphi_{M_0})^2}\overline{\beta}_{g+1}(\nu_r)$ and 
$$
\dfrac{a+b\delta}{2(\nu_r(\varphi_{M_0})+\delta\nu_r(\varphi_{F_1}))}\left(\hat{\mu}_D(\nu_r)-\left(\dfrac{(a+b\delta)\overline{\beta}_{g+1}(\nu_r)-\theta_1^r(D)\nu_r(\varphi_{M_0})}{\nu_r(\varphi_{M_0})+\delta\nu_r(\varphi_{F_1})}\right)\right).
$$
The length of the parallel sides of the trapezium and the distance between them are
\begin{equation*}
\begin{array}{c}
\dfrac{a}{\nu_r(\varphi_{M_0})}, \ \ \dfrac{a+b\delta}{\nu_r(\varphi_{M_0})+\delta\nu_r(\varphi_{F_1})}\text{ and }\\[4mm] 
\dfrac{-\theta_1^r(D)(\delta\overline{\beta}_{g+1}(\nu_r)+\nu_r(\varphi_{M_0})^2)}{\nu_r(\varphi_{M_0})(\nu_r(\varphi_{M_0})-\delta\nu_r(\varphi_{F_1}))},
\end{array}
\end{equation*}
and the area is 
$$
\dfrac{-\theta_1^r(D)\left((2a+b\delta)\nu_r(\varphi_{M_0})+a\delta\nu_r(\varphi_{F_1})\right)(\delta\overline{\beta}_{g+1}(\nu_r) +\nu_r(\varphi_{M_0})^2)}{2\nu_r(\varphi_{M_0})^2(\nu_r(\varphi_{M_0})+\delta\nu_r(\varphi_{F_1}))^2}.
$$
When summing, the coefficients of $\overline{\beta}_{g+1}(\nu_r)$ are cancelled. Therefore, we only have to sum the following fractions 
\begin{equation*}
\begin{array}{c}
\dfrac{(a+b\delta)\hat{\mu}_D(\nu_r)}{2(\nu_r(\varphi_{M_0})+\delta\nu_r(\varphi_{F_1}))},\dfrac{\theta_1^r(D)(a+b\delta)\nu_r(\varphi_{M_0})}{2(\nu_r(\varphi_{M_0})+\delta\nu_r(\varphi_{F_1}))^2} \text{ and }\\[4mm] 
\dfrac{-\theta_1^r(D)\nu_r(\varphi_{M_0})^2((2a+b\delta b)\nu_r(\varphi_{M_0})+a\delta\nu_r(\varphi_{F_1}))}{2\nu_r(\varphi_{M_0})^2(\nu_r(\varphi_{M_0})+\delta\nu_r(\varphi_{F_1}))^2},
\end{array}
\end{equation*}
giving rise to the desired value $D^2/2$.

Let us show that the defining points of $\Delta$ and $\Delta'$ that do not appear in the items (a), (b) and (c)  belong to the line $L$ which goes through $(0,0)$ and $Q_9$. It is clear that $(0,0),Q_1,Q_3$ (respectively, $Q_5$ and $Q_7$) and $Q_9$ are in $L$ when $\theta_1^r(D)\geq 0$ (respectively, $\theta_1^r(D)<0$) and $p_{r+1}$ is a free point. This corresponds to Item (c).

Now we suppose that $p_{r+1}$ is satellite and $r\preccurlyeq \eta.$ Then, $p_r$ is also a satellite point, $g^*=g$ and, by \cite[Proposition 2.5]{GalMonMoyNic2}, one obtains that 
$$
\nu_r(\varphi_{\eta})=e_{g-1}(\nu_\eta)\overline{\beta}_g(\nu_r)=e_{g-1}(\nu_r)\overline{\beta}_{g}(\nu_r)\dfrac{\overline{\beta}_0(\nu_\eta)}{\overline{\beta}_0(\nu_r)}=\overline{\beta}_{g+1}(\nu_r)\dfrac{\overline{\beta}_0(\nu_\eta)}{\overline{\beta}_0(\nu_r)},
$$
where $e_{g-1}(\nu_i)=\gcd(\overline{\beta}_0(\nu_i),\overline{\beta}_1(\nu_i),\ldots,\overline{\beta}_{g-1}(\nu_i)),$ for $i=r$ or $\eta$. Moreover, by the proof of Lemma 3.9 in \cite{GalMonMoyNic2}, it holds that $e_{g-1}(\nu_\eta)\overline{\beta}_{g}(\nu_r)-e_{g-1}(\nu_r)\overline{\beta}_g(\nu_\eta)=-1$ and then
$$
\nu_r(\varphi_\eta)+1=e_{g-1}(\nu_r)\overline{\beta}_{g}(\nu_\eta)=\overline{\beta}_{g+1}(\nu_r)\dfrac{\overline{\beta}_g(\nu_\eta)}{\overline{\beta}_g(\nu_r)}.
$$
Also, we have that 
$$
\nu_\eta(\varphi_{F_1})=\frac{\overline{\beta}_0(\nu_\eta)}{\overline{\beta}_0(\nu_r)}\nu_r(\varphi_{F_1})
\text{ and }\nu_\eta(\varphi_{M_0})=\frac{\overline{\beta}_0(\nu_\eta)}{\overline{\beta}_0(\nu_r)}\nu_r(\varphi_{M_0}).
$$ 
As a result, it is easy to check that the points $(0,0)$ and $Q_1,Q_3$ (respectively, $Q_5,Q_7$) and $Q_9$ are in the line $L\equiv \overline{\beta}_0(\nu_r)y=\overline{\beta}_0(\nu_\eta)x$ when $\theta_1^r(D)\geq 0$ (respectively, $\theta_1^r(D)< 0$), which corresponds to Item (b). 

A similar reasoning can be applied to the case when $p_{r+1}$ is satellite and $r\not\preccurlyeq \eta$. Notice that in this case $Q_2,Q_4$ (respectively, $Q_6,Q_8$) and $Q_9$ are in the line $L$ when $\theta_1^r(D)\geq 0$ (respectively, $\theta_1^r(D)<0$).

As a consequence of our reasoning $\Delta_\nu(D)$ is a quadrilateral or a triangle.  To conclude the proof we will show that $\Delta_{\nu}(D)$ is a triangle if and only if the conditions in the last two paragraphs of the statement hold. Otherwise, $\Delta_\nu(D)$ will be a quadrilateral.  

Assume, for instance, that $p_{r+1}$ is a satellite point and $r\preccurlyeq \eta.$ Suppose also that $\theta_1^r(D)\geq 0$. In this case, $\Delta_\nu(D)$ is a triangle if and only if one of the following conditions is satisfied: $Q_4$ belongs to the line with equation $\overline{\beta}_g(\nu_\eta)x=\overline{\beta}_g(\nu_r)y$, or $Q_4$ belongs to the line which goes through $Q_2$ and $Q_9$. These two conditions happen if and only if $\theta_1^r(D)=0,$ which proves our statement. Now consider that $\theta_1^r(D)<0$. Here, $\Delta_\nu(D)$ is a triangle if and only if one of the next conditions holds: $Q_8$ belongs to the line with equation $\overline{\beta}_g(\nu_\eta)x=\overline{\beta}_g(\nu_r)y$; $Q_8$ belongs to the line which goes through $Q_6$ and $Q_9$; or $Q_6=(0,0)=Q_5.$ The first and second conditions are true if and only if $\theta_1^r(D)=0$, which is a contradiction because we have supposed that $\theta_1^r(D)$ is negative. The third one happens if and only if $\delta>0$ and $a=0$. This completes the proof after noticing that the remaining cases can be proved analogously.
\end{proof}

An example that corresponds to Statement (a) of the above theorem is the next one.
%We finish this section with an example that corresponds to Statement (a) of the above theorem.

\begin{example}\label{Example_NOB}
Let $p$ be a special point of the Hirzebruch surface $\mathbb{F}_2$ and $\nu_r$  a special divisorial valuation centered at $\mathcal{O}_{\mathbb{F}_2,p}$  whose sequence of maximal contact values is $\{\overline{\beta}_i(\nu_r)\}_{i=0}^3=\{20,28,153,$ $612\}$. Let $\mathcal{C}_{\nu_r}=\{p_i\}_{i=1}^{12}$ (with $p=p_1$) its configuration of infinitely near points and set $F_1$ the fiber which passes through $p$. Suppose that the  strict transform of $M_0$ passes through $p_2$. Then,  $\nu_r(\varphi_{F_1})=20$, $\nu_r(\varphi_{M_0})=28$ and  $2\nu_r(\varphi_{F_1})\nu_r(\varphi_{M_0})+\nu_r(\varphi_{F_1})^2\delta=1920>612=\overline{\beta}_{g+1}(\nu_r)$. So, $\nu_r$ is non-positive at infinity by \cite[Theorem 3.6]{GalMonMor}. 

Let $\nu=\nu_{E_\bullet}$ be the valuation defined by the flag $E_{\bullet}=\{Z=Z_{12}\supset E_{12}\supset\{p_{13}\}\},$ where $p_{13}\in E_{8}\cap E_{12}$. According to Theorem \ref{Thm_NOBodies_specialcase}, $\Delta_\nu(F+2M)$ is a quadrilateral with vertices 
$$
(0,0),Q_5=\left(\dfrac{612}{28},\dfrac{152}{28}\right)\!\!,Q_7=\left(\dfrac{4068}{68},\dfrac{1012}{68}\right) \text{and } Q_9=\left(156,39\right),
$$
since $\nu_r$ is non-minimal with respect to $F+2M$ by Corollary \ref{Cor_nonminimalresptanyD}, $\theta_1^r(F+2M)<0$ and $12\not\preccurlyeq  8$. Figure \ref{Fig_NOB} shows the Newton-Okounkov body $\Delta_\nu(F+2M)$ (in dark) and the triangle $\mathfrak{C}(\nu)\cap \mathfrak{H}_{F+2M}(\nu)$ given in Proposition \ref{Prop_coortriangle}.
\end{example}

\definecolor{aqaqaq}{rgb}{0.6274509803921569,0.6274509803921569,0.6274509803921569}
\definecolor{eqeqeq}{rgb}{0.8784313725490196,0.8784313725490196,0.8784313725490196}
\definecolor{uuuuuu}{rgb}{0.26666666666666666,0.26666666666666666,0.26666666666666666}
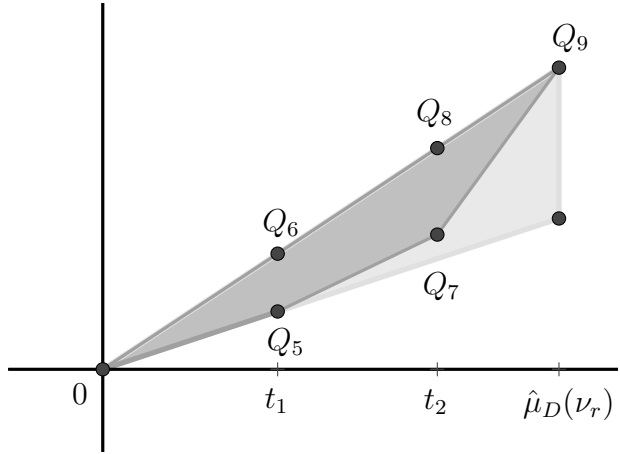
\begin{figure}[ht!]
\begin{center}
\begin{tikzpicture}[line cap=round,line join=round,>=triangle 45,x=1.0cm,y=1.0cm]
\clip(-1.2364583977696633,-1.0882371964892057) rectangle (6.773570558375912,4.857050778397071);
\fill[line width=2.pt,color=eqeqeq,fill=eqeqeq,fill opacity=0.699999988079071] (0.,0.) -- (6.,4.) -- (6.,2.) -- cycle;
\fill[line width=2.pt,color=aqaqaq,fill=aqaqaq,fill opacity=0.550000011920929] (0.,0.) -- (2.3,1.5333333333333332) -- (4.4,2.9333333333333336) -- (6.,4.) -- (4.4,1.7862948958018658) -- (2.3,0.7666666666666666) -- cycle;
\draw [line width=2.pt,color=eqeqeq] (0.,0.)-- (6.,4.);
\draw [line width=2.pt,color=eqeqeq] (6.,4.)-- (6.,2.);
\draw [line width=2.pt,color=eqeqeq] (6.,2.)-- (0.,0.);
\draw [line width=1.2pt,color=aqaqaq] (0.,0.)-- (2.3,1.5333333333333332);
\draw [line width=1.2pt,color=aqaqaq] (2.3,1.5333333333333332)-- (4.4,2.9333333333333336);
\draw [line width=1.2pt,color=aqaqaq] (4.4,2.9333333333333336)-- (6.,4.);
\draw [line width=1.2pt,color=aqaqaq] (6.,4.)-- (4.4,1.7862948958018658);
\draw [line width=1.2pt,color=aqaqaq] (4.4,1.7862948958018658)-- (2.3,0.7666666666666666);
\draw [line width=2.pt,color=aqaqaq] (2.3,0.7666666666666666)-- (0.,0.);
\draw [line width=1.2pt] (0.,-1.0882371964892057) -- (0.,4.857050778397071);
\draw [line width=1.2pt,domain=-1.2364583977696633:6.773570558375912] plot(\x,{(-0.-0.*\x)/1.});
\draw (-0.548210937186771,-0.04854397920613749) node[anchor=north west] {$0$};
\draw (1.9851254602779174,-0.10711824496856387) node[anchor=north west] {$t_1$};
\draw (4.079154968008845,-0.12176181140917046) node[anchor=north west] {$t_2$};
\draw (5.382432074219002,-0.10711824496856387) node[anchor=north west] {$\hat{\mu}_D(\nu_r)$};
\draw (1.9558383342956667,2.265139518409704) node[anchor=north west] {$Q_6$};
\draw (3.976650027070967,3.7294961624703635) node[anchor=north west] {$Q_8$};
\draw (5.763164711988262,4.725258680431612) node[anchor=north west] {$Q_9$};
\draw (4.064511405017719,1.4304562312951283) node[anchor=north west] {$Q_7$};
\draw (2.014412586260168,0.6836343428241921) node[anchor=north west] {$Q_5$};
\begin{scriptsize}
\draw [fill=uuuuuu] (0.,0.) circle (2.5pt);
\draw [fill=uuuuuu] (6.,4.) circle (2.5pt);
\draw [fill=uuuuuu] (6.,2.) circle (2.5pt);
\draw [fill=uuuuuu] (2.3,0.7666666666666666) circle (2.5pt);
\draw [fill=uuuuuu] (2.3,1.5333333333333332) circle (2.5pt);
\draw [fill=uuuuuu] (4.4,2.9333333333333336) circle (2.5pt);
\draw [fill=uuuuuu] (4.4,1.7862948958018658) circle (2.5pt);
\draw [color=uuuuuu] (2.3,0.)-- ++(-2.5pt,0 pt) -- ++(5.0pt,0 pt) ++(-2.5pt,-2.5pt) -- ++(0 pt,5.0pt);
\draw [color=uuuuuu] (4.4,0.)-- ++(-2.5pt,0 pt) -- ++(5.0pt,0 pt) ++(-2.5pt,-2.5pt) -- ++(0 pt,5.0pt);
\draw [color=uuuuuu] (6.,0.)-- ++(-2.5pt,0 pt) -- ++(5.0pt,0 pt) ++(-2.5pt,-2.5pt) -- ++(0 pt,5.0pt);
\end{scriptsize}
\end{tikzpicture}
\caption{$\Delta_\nu(F+2M)$ and  $\mathfrak{C}(\nu)\cap \mathfrak{H}_{F+2M}(\nu)$ in Example \ref{Example_NOB}.}\label{Fig_NOB}
\end{center}
 
\end{figure}

Now we are going to determine the Newton-Okounkov $\Delta_\nu(D)$ for valuations $\nu$ in Case B introduced before Theorem \ref{Thm_NOBodies_specialcase}. That is, we assume that $g^*=1$ and $\nu(\varphi_{F_1})=\overline{\beta}_1(\nu)$. Note that, in this case, it could happen that $\nu(\varphi_{M_0})=(0,0)$ and then $\theta_1^r(D)=a\nu_r(\varphi_{F_1})\geq 0$. In addition, we can assume that $D$ is a big and nef divisor because when $D$ is big and not nef, $\Delta_\nu(D)$ can be computed as we explained in  paragraph under Theorem \ref{Thm_NOBminimalvaluation}. Following \cite[Theorem 6.4]{LazMus} and Proposition \ref{Prop_Zardecomp_casoespecial}, if $p_{r+1}\in E_r\cap E_\eta$ is a satellite point and $\theta_1^r(D)\geq 0$ (respectively, $\theta_1^r(D)<0$), then the points $Q_1,Q_2,Q_3,Q_4$ (respectively, $Q_5,Q_6,Q_7,Q_8$) and $Q_9$ described before Theorem \ref{Thm_NOBodies_specialcase} for the satellite case belong to $\Delta_\nu(D)$. Otherwise ($p_{r+1}$ is free), the points 
\begin{equation*}
\begin{array}{c}
Q_1=\left(\dfrac{b\overline{\beta}_{g+1}(\nu_r)}{\nu_r(\varphi_{F_1})},0\right),
Q_2=Q_1+\left(0, \dfrac{b}{\nu_r(\varphi_{F_1})}\right),\\[4mm]
Q_3=\left(\dfrac{b\overline{\beta}_{g+1}(\nu_r)}{\nu_r(\varphi_{F_1})}+\theta_1^r(D),\dfrac{\theta_1^r(D)}{\nu_r(\varphi_{F_1})}\right)\text{ and } Q_4=Q_3 + \left(0, \dfrac{b}{\nu_r(\varphi_{F_1})}\right)
\end{array}
\end{equation*}
(respectively, $Q_5,Q_6,Q_7,Q_8$ given before Theorem \ref{Thm_NOBodies_specialcase} for the free case) and $$Q_9=(\hat{\mu}_D(\nu_r), a+b\delta)$$ are in $\Delta_\nu(D)$ if $\theta_1^r(D)\geq 0$ (respectively, $\theta_1^r(D)<0$).
\begin{theorem}\label{Thm_NOBcasoespecial_g*=1F}
Let $\nu$ be a valuation in Case B. With notations as in the previous paragraph, the Newton-Okounkov body $\Delta_\nu(D)$ of $D$ with respect to $\nu$ is a quadrilateral if and only if $a\neq 0$. Otherwise, it is a triangle.  
\begin{itemize}
\item[(a)] When $\nu(\varphi_{M_0})=(0,0)$, the vertices of the quadrilateral are
\begin{itemize}
\item[(a.1)] $(0,0),Q_2,Q_4$ (respectively, $Q_1,Q_3$) and $Q_9$ if  $p_{r+1}$ is the satellite point $E_\eta\cap E_r$ and $r\not\preccurlyeq \eta,$ (respectively, $r\preccurlyeq \eta$). 
\item[(a.2)] $(0,0),Q_1,Q_3$ and $Q_9$ whenever $p_{r+1}$ is a free point.
\end{itemize}

In addition, if $\delta >0$ and $a=0$, then the vertices of the triangle $\Delta_\nu(D)$ are the above ones, where $Q_1=Q_3$ and $Q_2=Q_4$.
\item[(b)] When $\nu(\varphi_{M_0})\neq (0,0)$, the vertices of the quadrilateral are
\begin{itemize}
\item[(b.1)] $(0,0),Q_2,Q_3$ (respectively, $Q_5,Q_8$) and $Q_9$ if $\theta_1^r(D)\geq 0$ (respectively, $\theta_1^r(D)< 0$), $p_{r+1}$ is the satellite point $E_\eta\cap E_r$ and $r\not\preccurlyeq \eta.$
\item[(b.2)] $(0,0),Q_1,Q_4$ (respectively, $Q_6,Q_7$) and $Q_9$ if $\theta_1^r(D)\geq 0$ (respectively, $\theta_1^r(D)< 0$), $p_{r+1}$ is the satellite point $E_\eta\cap E_r$ and $r\preccurlyeq \eta.$
\item[(b.3)] $(0,0),Q_1,Q_4$ (respectively, $Q_6,Q_7$) and $Q_9$ if $\theta_1^r(D)\geq 0$ (respectively, $\theta_1^r(D)< 0$) and $p_{r+1}$ is a free point.
\end{itemize}

Moreover, if $\delta >0$ and $a=0,$ the vertices of the triangle $\Delta_\nu(D)$ are the above ones where $Q_5=(0,0)=Q_6$.
\end{itemize} 
\end{theorem}
\begin{proof}
Consider the convex sets defined by the points  $\{(0,0),Q_1,Q_2,Q_3,Q_4,Q_9\}$ and $\{(0,0),Q_5,Q_6,Q_7,Q_8,Q_9\}$. Reasoning as in the proof of Theorem  \ref{Thm_NOBodies_specialcase}, we deduce that the area of both sets is $D^2/2$.

To prove items (a.1) and (a.2), it suffices to check that the points defining $\Delta_\nu(D)$ that do not appear in the statement belong to the line $L$ that passes through $(0,0)$ and $Q_9.$ $L$ is defined by the equation $L\equiv \overline{\beta}_{g^*}(\nu_r)y=\overline{\beta}_{g^*}(\nu_\eta)x$ (respectively, $L\equiv y=x/\overline{\beta}_{g+1}(\nu_r)$) when $p_{r+1}$ is a satellite point (respectively, $p_{r+1}$ is a free point) because  
\begin{equation}\label{Cond_Fg*}
\nu_\eta(\varphi_{F_1})=\nu_r(\varphi_{F_1})\dfrac{\overline{\beta}_{g^*}(\nu_\eta)}{\overline{\beta}_{g^*}(\nu_r)}\ \ \left(\text{respectively, } \nu_r(\varphi_{F_1})=\overline{\beta}_{g+1}(\nu_r)\right)
\end{equation}
if $p_{r+1}$ is a satellite point (respectively, $p_{r+1}$ is a free point). To verify the point alignment we can use \cite[Proposition 2.5 and Lemma 3.9]{GalMonMoyNic2}, which prove  that
\begin{equation}\label{Cond_betabarrag*}
\nu_r(\varphi_\eta)=\overline{\beta}_{g+1}(\nu_r)\dfrac{\overline{\beta}_{g^*}(\nu_\eta)}{\overline{\beta}_{g^*}(\nu_r)}\ \ \left(\text{respectively, } \nu_r(\varphi_\eta)+1=\overline{\beta}_{g+1}(\nu_r)\dfrac{\overline{\beta}_{g^*}(\nu_\eta)}{\overline{\beta}_{g^*}(\nu_r)}\right)
\end{equation}
if $r\not\preccurlyeq \eta$ (respectively, $r\preccurlyeq \eta$). 

To conclude the proof, we only show Item (b.1) since the remaining items (b.2) and (b.3) run similarly. First, we suppose that $\theta_1^r(D)\geq 0$. By \eqref{Cond_Fg*} and \eqref{Cond_betabarrag*}, the points  $(0,0),Q_1$ and $Q_3$ belong to the line with equation $\overline{\beta}_{g^*}(\nu_r)y=\overline{\beta}_{g^*}(\nu_\eta)x$. Moreover, it is easily seen that $Q_4$ belongs to the line which goes through $Q_2$ and $Q_9$. Finally, the point $Q_9$ does not belong neither to the line with equation $\overline{\beta}_{0}(\nu_r)y=\overline{\beta}_{0}(\nu_\eta)x$ nor to that with equation $\overline{\beta}_{g^*}(\nu_r)y=\overline{\beta}_{g^*}(\nu_\eta)x$, which finishes the proof in this case $\theta_1^r(D)\geq 0$.

Only remains to assume that $\theta_1^r(D)<0$, then 
\begin{equation*}
\nu_\eta(\varphi_{M_0})=\nu_r(\varphi_{M_0})\dfrac{\overline{\beta}_{0}(\nu_\eta)}{\overline{\beta}_{0}(\nu_r)}\text{ and }\nu_r(\varphi_\eta)+1=\overline{\beta}_{g+1}(\nu_r)\dfrac{\overline{\beta}_{0}(\nu_\eta)}{\overline{\beta}_{0}(\nu_r)}.
\end{equation*}   
Therefore, $(0,0),Q_6$ and $Q_8$ belong to the line with equation $\overline{\beta}_{0}(\nu_r)y=\overline{\beta}_{0}(\nu_\eta)x$. Moreover $Q_7$ is in the line which goes through $Q_5$ and $Q_9,$ which completes the proof. 
\end{proof}

We finish this section by describing the Newton-Okounkov body $\Delta_\nu(D)$ in Case C introduced before Theorem \ref{Thm_NOBodies_specialcase}. Then,  suppose that $g^*=1$ and $\nu(\varphi_{M_0})=\overline{\beta}_1(\nu)$. Here, we can assume that $D$ is a big and nef divisor except for the case when all the points $\{p_i\}_{i=1}^{r+1}$ are free. In this last situation, $p_{r+1}\in\text{supp}( N_{D^*})$ if and only if $D$ is big and not nef.

First, assume that $D$ is big and nef. According \cite[Theorem 6.4]{LazMus} and Proposition \ref{Prop_Zardecomp_casoespecial}, when $p_{r+1}\in E_r\cap E_\eta$ is a satellite point and $\theta_1^r(D)\geq 0$ (respectively, $\theta_1^r(D)<0$) the points $Q_1,Q_2,Q_3,Q_4$ (respectively, $Q_5,Q_6,Q_7,Q_8$) and $Q_9$ described before Theorem \ref{Thm_NOBodies_specialcase} for the satellite case are in $\Delta_\nu(D)$.

When $p_{r+1}$ is free, the points 
\begin{equation*}
\begin{array}{c}
Q_5=\left(\dfrac{a\overline{\beta}_{g+1}(\nu_r)}{\nu_r(\varphi_{M_0})},0\right),
Q_6=Q_5+\left(0, \dfrac{a}{\nu_r(\varphi_{M_0})}\right),\\[4mm]
Q_7=\left(\dfrac{(a+b\delta)\overline{\beta}_{g+1}(\nu_r)-\theta_1^r(D)\nu_r(\varphi_{M_0})}{\nu_r(\varphi_{M_0})+\delta\nu_r(\varphi_{F_1})},\dfrac{-\theta_1^r(D)}{\nu_r(\varphi_{M_0})+\delta\nu_r(\varphi_{F_1})}\right),\\[4mm]
Q_8=Q_7 + \left(0, \dfrac{a+b\delta}{\nu_r(\varphi_{M_0})+\delta\nu_r(\varphi_{F_1})}\right)\\[4mm]
\end{array}
\end{equation*}
(respectively, $Q_1,Q_2,Q_3,Q_4$ provided before Theorem \ref{Thm_NOBodies_specialcase} for the free case) and $Q_9=(\hat{\mu}_D(\nu_r),b)$ belong to $\Delta_\nu(D)$ if $\theta_1^r(D)< 0$ (respectively, $\theta_1^r(D)\geq 0$).

Finally, assume that $D$ is big and not nef and all the points in $\{p_i\}_{i=1}^{r+1}$ are free. Recall that these assumptions are equivalent to the fact that $p_{r+1}\in\text{supp}( N_{D^*})$ (see the paragraph after Theorem \ref{Thm_NOBminimalvaluation}). Then,  the points 
\begin{equation*}
\begin{array}{c}
P_1=\left(\dfrac{-a\nu_r(\varphi_{M_0})}{\delta},\dfrac{-a}{\delta}\right),\\[4mm]
 P_2=\left(\dfrac{(a+b\delta)\overline{\beta}_{g+1}(\nu_r)-\theta_1^r(D)\nu_r(\varphi_{M_0})}{\nu_r(\varphi_{M_0})+\delta\nu_r(\varphi_{F_1})},\dfrac{-\theta_1^r(D)}{\nu_r(\varphi_{M_0})+\delta\nu_r(\varphi_{F_1})}\right),\\[4mm]
P_3=P_2 + \left(0, \dfrac{a+b\delta}{\nu_r(\varphi_{M_0})+\delta\nu_r(\varphi_{F_1})}\right) \text{ and } P_4=(\hat{\mu}_D(\nu_r),b)\\[4mm]
\end{array}
\end{equation*}
are in $\Delta_\nu(D)$.

Next, we state our result for Case C, where, as mentioned, $D$ is big and nef except when $p_{r+1}\in\text{supp}(N_{D^*})$. We recall that the Newton-Okounkov bodies $\Delta_\nu(D)$ for the remaining cases where $D$ is big but not nef can be reduced to the big and nef situation (see the paragraph below Theorem \ref{Thm_NOBminimalvaluation}).

\begin{theorem}\label{Thm_NOBcasoespecial_g*=1M0}
Let $\nu$ be a valuation in Case C. With assumptions and notations as in the previous paragraphs, the Newton-Okounkov body $\Delta_\nu(D)$ of $D$ with respect to $\nu$ is a quadrilateral if and only if $a\neq 0$ and $D$ is nef. Otherwise, it is a triangle.  
\begin{itemize}
\item[(a)] When $D$ is a big and nef divisor, the vertices of the quadrilateral are
\begin{itemize}
\item[(a.1)] $(0,0),Q_1,Q_4$ (respectively, $Q_6,Q_7$) and $Q_9$ if $\theta_1^r(D)\geq 0$ (respectively, $\theta_1^r(D)< 0$), $p_{r+1}$ is the satellite point $E_\eta\cap E_r$ and $r\not\preccurlyeq \eta.$
\item[(a.2)] $(0,0),Q_2,Q_3$ (respectively, $Q_5,Q_8$) and $Q_9$ if $\theta_1^r(D)\geq 0$ (respectively, $\theta_1^r(D)< 0$), $p_{r+1}$ is the satellite point $E_\eta\cap E_r$ and $r\preccurlyeq \eta.$
\item[(a.3)] $(0,0),Q_2,Q_3$ (respectively, $Q_5,Q_8$) and $Q_9$ if $\theta_1^r(D)\geq 0$ (respectively, $\theta_1^r(D)< 0$) and $p_{r+1}$ is a free point.
\end{itemize}
Moreover, if $\delta >0$ and $a=0,$ the vertices of the triangle $\Delta_\nu(D)$ are the above ones where $Q_5=(0,0)=Q_6$. 
\item[(b)] If $D$ is big but not nef and all the points in $\{p_{i}\}_{i=1}^{r+1}$ are free, the vertices of the triangle $\Delta_\nu(D)$ are $P_1,P_3$ and $P_4$.
\end{itemize} 
\end{theorem}
\begin{proof}
Item (a) follows as in Theorem \ref{Thm_NOBcasoespecial_g*=1F} (b) after considering that $$\nu_\eta(\varphi_{M_0})\overline{\beta}_{g^*}(\nu_r)=\nu_r(\varphi_{M_0})\overline{\beta}_{g^*}(\nu_\eta)\text{ and } \nu_\eta(\varphi_{F_1})\overline{\beta}_{0}(\nu_r)=\nu_r(\varphi_{F_1})\overline{\beta}_{0}(\nu_\eta).$$

Now we are going to prove Item (b). For a start, the area of the convex set $\Delta$ generated by $P_1,P_2,P_3$ and $P_4$ is $P_{D^*}^2/2$. Indeed, the area of the triangle generated by $P_1,P_2$ and $P_3$ (respectively, $P_2,P_3$ and $P_4$) is 
$$
\dfrac{(a+b\delta)\left(\frac{(a+b\delta)\overline{\beta}_{g+1}(\nu_r)-\theta_1^r(D)\nu_r(\varphi_{M_0})}{\nu_r(\varphi_{M_0})+\delta\nu_r(\varphi_{F_1})}-\frac{-a\nu_r(\varphi_{M_0})}{\delta}\right)}{2(\nu_r(\varphi_{M_0})+\delta\nu_r(\varphi_{F_1}))}
$$
$$
\left(\text{respectively, } \dfrac{(a+b\delta)\left(\hat{\mu}_{D}(\nu_r)-\frac{(a+b\delta)\overline{\beta}_{g+1}(\nu_r)-\theta_1^r(D)\nu_r(\varphi_{M_0})}{\nu_r(\varphi_{M_0})+\delta\nu_r(\varphi_{F_1})}\right)}{2(\nu_r(\varphi_{M_0})+\delta\nu_r(\varphi_{F_1}))}\right).
$$
Therefore, the area of $\Delta$ is the sum of the two above areas, which is 
\begin{align*}
\dfrac{(a+b\delta)\left(\hat{\mu}_{D}(\nu_r)-\frac{-a\nu_r(\varphi_{M_0})}{\delta}\right)}{2(\nu_r(\varphi_{M_0})+\delta\nu_r(\varphi_{F_1}))}&=\dfrac{\left(a+b\delta\right)\left(b+\frac{a}{\delta}\right)\left(\nu_r(\varphi_{M_0})+\delta\nu_r(\varphi_{F_1})\right)}{2(\nu_r(\varphi_{M_0})+\delta\nu_r(\varphi_{F_1}))}\\ &=\dfrac{\left(\left(b+\frac{a}{\delta}\right)M^*\right)^2}{2}=\dfrac{P_{D^*}^2}{2}.
\end{align*}
Finally, $P_2$ belongs to the line which goes through $P_1$ and $P_4,$ and $P_4$ is not in the line with equation $\overline{\beta}_{g+1}(\nu_r)y=x,$ which completes the proof of Item (b).
\end{proof}

\begin{remark}
Notice that the particular cases $\delta=1$, $a=0$ and $\theta_1^r(D)<0$ in Theorems \ref{Thm_NOBodies_specialcase}, \ref{Thm_NOBcasoespecial_g*=1F} and \ref{Thm_NOBcasoespecial_g*=1M0} provide the Newton-Okounkov body described in \cite[Corollary 5.2]{GalMonMoyNic2}. This holds because $\mathbb{F}_1$ is the blow-up of the projective plane $\mathbb{P}^2$ at a point, and the special section, in this case, is the exceptional divisor.
\end{remark}

\subsection{Newton-Okounkov bodies with respect to non-positive at infinity non-special valuations}

In this last subsection, we complete Subsection \ref{Subsec_NOofNPIvalspec} by considering non-special valuations. Denote by $\nu$ a non-positive at infinity non-special exceptional curve  valuation whose first component is $\nu_r$ and assume that $D\sim aF+bM$ is a big and nef divisor on $\mathbb{F}_\delta$ (since $p_1$ is a general point by Definition \ref{Def_spe_nspe_val}). We will use the notation $\theta_2^r(D)$ for the value  $a\nu_r(\varphi_{F_1})-b\big(\nu_r(\varphi_{M_1})-\delta\nu_r(\varphi_{F_1})\big)$, where $F_1$ and $M_1$ are as defined below Definition \ref{Def_spe_nspe_val}. The following results translate to the non-special case what happens in Subsection \ref{Subsec_NOofNPIvalspec} for the special one.

\begin{lemma}\label{Lemma_positivepartnef_nonspecialcase}
Let $\nu_r$ be a non-positive at infinity non-special divisorial valuation of $\mathbb{F}_\delta$. Set $D$ and $\theta_2^r(D)$ as above. Then, the divisor 
$$
D_3=D^*-\dfrac{b}{\nu_r(\varphi_{F_1})}\sum_{i=1}^r\nu_r(\mathfrak{m}_i)E_i^* \left(\!\!\text{respectively, } D_4=D^*-\dfrac{a+b\delta}{\nu_r(\varphi_{M_1})}\sum_{i=1}^r\nu_r(\mathfrak{m}_i)E_i^*\right)
$$
is nef when $\theta_2^r (D)\geq 0$ (respectively, $\theta_2^r (D)<0$).
\end{lemma}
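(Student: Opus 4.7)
The plan is to mirror the proof of Lemma \ref{Lemma_partepositivanef_casoespecial} using, in place of $\Lambda_r$, the non-special analogue
\[
\Delta_r = (\nu_r(\varphi_{M_1}) - \delta\nu_r(\varphi_{F_1}))F^* + \nu_r(\varphi_{F_1})M^* - \sum_{i=1}^r\nu_r(\mathfrak{m}_i)E_i^*,
\]
which is nef by \cite[Theorem 4.8]{GalMonMor}. The key idea, as in the special case, is to exhibit $D_3$ (respectively $D_4$) as a non-negative $\mathbb{Q}$-combination of $\Delta_r$ and another nef class.

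For $D_3$ the argument is completely parallel to the special case: a direct manipulation, adding and subtracting $(\nu_r(\varphi_{M_1}) - \delta\nu_r(\varphi_{F_1}))F^*$ inside the brackets, gives
\[
D_3 \sim \frac{b}{\nu_r(\varphi_{F_1})}\left(\frac{\theta_2^r(D)}{b}F^* + \Delta_r\right),
\]
which displays $D_3$ as a non-negative combination of the nef divisors $F^*$ (pullback of a nef class on $\mathbb{F}_\delta$) and $\Delta_r$ whenever $\theta_2^r(D) \geq 0$. Hence $D_3$ is nef in that range.

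For $D_4$ the analogous rearrangement, combined with the identity $M^* = M_0^* + \delta F^*$ (from $M \sim M_0 + \delta F$), yields
\[
D_4 \sim \frac{-\theta_2^r(D)}{\nu_r(\varphi_{M_1})}\tilde{M}_0 + \frac{a+b\delta}{\nu_r(\varphi_{M_1})}\Delta_r,
\]
where I use $M_0^* = \tilde{M}_0$, valid in the non-special case since $p \notin M_0$. Under the hypothesis $\theta_2^r(D) < 0$, and using that $a+b\delta > 0$ for a big divisor, both coefficients are non-negative.

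The main obstacle is that $\tilde{M}_0$, unlike the $M^*$ playing the same role in the special case, is not nef: its self-intersection equals $-\delta$, so the decomposition alone does not settle the question. To finish I would appeal to \cite[Theorem 4.8]{GalMonMor}, which states that the cone of curves of $Z$ is generated by $\tilde{F}_1,\tilde{M}_0,\tilde{M}_1,E_1,\ldots,E_r$, and verify that $D_4$ meets each generator non-negatively. For every such generator $C \neq \tilde{M}_0$ both summands of the decomposition contribute non-negatively to $D_4\cdot C$ (distinct irreducible curves intersect non-negatively, and $\Delta_r$ is nef), so $D_4 \cdot C \geq 0$. The only case that requires actual computation is $C = \tilde{M}_0$: using $\tilde{M}_0^2 = -\delta$ and $\Delta_r\cdot\tilde{M}_0 = \nu_r(\varphi_{M_1}) - \delta\nu_r(\varphi_{F_1})$, the two summands collapse to $D_4\cdot\tilde{M}_0 = a$, which is non-negative because $D$ is nef on $\mathbb{F}_\delta$. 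This completes the verification.
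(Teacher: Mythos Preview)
Your argument for $D_3$ is identical to the paper's. For $D_4$ your proof is correct but follows a different route. The paper introduces a second nef divisor
\[
\Gamma_r:=\nu_r(\varphi_{M_1})M^* - \delta\sum_{i=1}^r\nu_r(\mathfrak{m}_i)E_i^*,
\]
whose nefness is also part of \cite[Theorem~4.8]{GalMonMor}, and writes
\[
D_4\sim \dfrac{a}{\nu_r(\varphi_{M_1})-\delta\nu_r(\varphi_{F_1})}\,\Delta_r \;+\; \dfrac{-\theta_2^r(D)}{\nu_r(\varphi_{M_1})-\delta\nu_r(\varphi_{F_1})}\,\Gamma_r,
\]
so that $D_4$ is a non-negative combination of nef classes and nefness is immediate. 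Your decomposition $D_4\sim \frac{-\theta_2^r(D)}{\nu_r(\varphi_{M_1})}\tilde{M}_0+\frac{a+b\delta}{\nu_r(\varphi_{M_1})}\Delta_r$ is valid, but since $\tilde{M}_0$ is not nef you are forced into the extra step of testing $D_4$ against the explicit generators of $NE(Z)$; this works precisely because \cite[Theorem~4.8]{GalMonMor} also supplies those generators in the non-special case, and the only nontrivial check $D_4\cdot\tilde{M}_0=a\ge 0$ goes through as you say. Thus both proofs draw on the same external theorem, but the paper uses the nefness of $\Gamma_r$ to stay inside the nef cone, while you use the description of the Mori cone to verify nefness by hand; the paper's route is slightly cleaner, yours is a bit more self-contained about \emph{why} it suffices.
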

\begin{proof}
We are going to show that $D_4$ is nef when $\theta_2^r (D)< 0$. The fact that the divisor $D_3$ is nef follows  from a similar reasoning as that used in Lemma \ref{Lemma_partepositivanef_casoespecial}.

Write 
$$
\Delta_r:=(\nu_r(\varphi_{M_1})-\delta\nu_r(\varphi_{F_1}))F^*+\nu_r(\varphi_{F_1})M^* - \sum_{i=1}^r\nu_r(\mathfrak{m}_i)E_i^*
\text{ and}
$$ 
$$
\Gamma_r:=\nu_r(\varphi_{M_1})M^* - \delta\sum_{i=1}^r\nu_r(\mathfrak{m}_i)E_i^*.
$$ 
Both divisors are nef by \cite[Theorem 4.8]{GalMonMor} and this concludes the proof since 
$$
D_4\sim\dfrac{a}{\nu_r(\varphi_{M_1})-\delta\nu_r(\varphi_{F_1})}\Delta_r + \dfrac{-\theta_2^r(D)}{\nu_r(\varphi_{M_1})-\delta\nu_r(\varphi_{F_1})}\Gamma_r
$$
and $-\theta_2^r(D)>0$.
\end{proof}

The following result can be proved reasoning as in the proof of Lemma \ref{Lemma_t_ibelongstoT_casoespecial}. Notice that we are considering a non-special divisorial valuation whose non-positivity at infinity can be checked with the inequality below Definition \ref{Def_NPI_spe_nspe_val}. Recall that we are considering a big and nef divisor $D\sim aF+bM$ on $\mathbb{F}_\delta$. We will also use the value $\theta_2^r(D)$. 

\begin{lemma}\label{Lemma_t_ibelongstoT_casonoespecial}
Let $\nu_r$ be a non-positive at infinity non-special divisorial va\-luation of $\mathbb{F}_\delta$. Then, the rational numbers 
\[
t_5=\dfrac{b}{\nu_r(\varphi_{F_1})}\overline{\beta}_{g+1}(\nu_r) \text{ and } t_6=\dfrac{b}{\nu_r(\varphi_{F_1})}\overline{\beta}_{g+1}(\nu_r)+\theta_2^r (D)
\]
\[
\left(\text{respectively, }
t_7=\dfrac{a+b\delta}{\nu_r(\varphi_{M_1})}\overline{\beta}_{g+1}(\nu_r)
\text{ and } 
t_8=\dfrac{a\overline{\beta}_{g+1}(\nu_r) - \nu_r(\varphi_{M_1})\theta_2^r (D)}{\nu_r(\varphi_{M_1})-\delta \nu_r(\varphi_{F_1})}\right)
\]
belong to the set $T_{D,\nu_r}:=\{t\in\mathbb{Q} \ | \ 0\leq t \leq \hat{\mu}_D(\nu_r)\}$ when $\theta^r_2(D)\geq 0$ (respectively, $\theta^r_2(D)<0$).  
\end{lemma}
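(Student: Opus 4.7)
The plan is to mirror, line by line, the proof of Lemma \ref{Lemma_t_ibelongstoT_casoespecial}, substituting the non-special tools for their special counterparts: Lemma \ref{Lemma_positivepartnef_nonspecialcase} replaces Lemma \ref{Lemma_partepositivanef_casoespecial}, Theorem \ref{Thm_mupico_nonpositive}(b) replaces (a), and the non-positivity at infinity inequality
\[
2\nu_r(\varphi_{M_1})\nu_r(\varphi_{F_1})-\delta\nu_r(\varphi_{F_1})^2\geq \overline{\beta}_{g+1}(\nu_r)
\]
(the condition from the paragraph below Definition \ref{Def_NPI_spe_nspe_val}) replaces its special-case analogue. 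Non-negativity of each of $t_5,t_6,t_7,t_8$ is immediate from $b>0$, $a\geq 0$, $a+b\delta>0$ (since $D$ is big and nef), the positivity of $\nu_r(\varphi_{F_1})$, $\nu_r(\varphi_{M_1})$, $\overline{\beta}_{g+1}(\nu_r)$, and the sign assumed for $\theta_2^r(D)$; for $t_8$, the numerator $a\overline{\beta}_{g+1}(\nu_r)-\nu_r(\varphi_{M_1})\theta_2^r(D)$ is positive because the two summands are, and the denominator $\nu_r(\varphi_{M_1})-\delta\nu_r(\varphi_{F_1})$ is positive (this is the $F^*$-coefficient of the nef divisor $\Delta_r$ of \cite[Theorem 4.8]{GalMonMor}, which appears with a positive factor in the decomposition of $D_4$ given in the proof of Lemma \ref{Lemma_positivepartnef_nonspecialcase}).

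In the case $\theta_2^r(D)\geq 0$, I would first deduce $t_5\leq \hat{\mu}_D(\nu_r)$ from the nefness of $D_3$: for any $C\in |mD|$ with $m$ a positive integer, the inequality $D_3\cdot \tilde{C}\geq 0$ unwinds to
\[
mD^2-\frac{b}{\nu_r(\varphi_{F_1})}\nu_r(\varphi_C)\geq 0,
\]
giving $\hat{\mu}_D(\nu_r)\leq \nu_r(\varphi_{F_1})D^2/b$. Combining this with (\ref{Cond_valoracionminimalD2beta}), rewritten as $\hat{\mu}_D(\nu_r)\geq D^2\overline{\beta}_{g+1}(\nu_r)/\hat{\mu}_D(\nu_r)$, yields $\hat{\mu}_D(\nu_r)\geq t_5$. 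For $t_6\leq \hat{\mu}_D(\nu_r)$ I would substitute $\hat{\mu}_D(\nu_r)=a\nu_r(\varphi_{F_1})+b\nu_r(\varphi_{M_1})$ (Theorem \ref{Thm_mupico_nonpositive}(b)) and expand $\theta_2^r(D)=a\nu_r(\varphi_{F_1})-b\nu_r(\varphi_{M_1})+b\delta\nu_r(\varphi_{F_1})$; after cancellation and multiplication by $\nu_r(\varphi_{F_1})/b>0$, the desired inequality becomes exactly the non-positivity at infinity inequality displayed above.

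The case $\theta_2^r(D)<0$ proceeds identically. For $t_7$, I would pair the nef divisor $D_4$ with the strict transform of an arbitrary $C\in |mD|$ to get $\hat{\mu}_D(\nu_r)\leq \nu_r(\varphi_{M_1})D^2/(a+b\delta)$, and then combine with (\ref{Cond_valoracionminimalD2beta}). For $t_8$, after substituting $\hat{\mu}_D(\nu_r)=a\nu_r(\varphi_{F_1})+b\nu_r(\varphi_{M_1})$ and $\theta_2^r(D)=a\nu_r(\varphi_{F_1})-b\nu_r(\varphi_{M_1})+b\delta\nu_r(\varphi_{F_1})$ and clearing the (positive) denominator $\nu_r(\varphi_{M_1})-\delta\nu_r(\varphi_{F_1})$, the expansion collapses to
\[
a\bigl(2\nu_r(\varphi_{F_1})\nu_r(\varphi_{M_1})-\delta\nu_r(\varphi_{F_1})^2-\overline{\beta}_{g+1}(\nu_r)\bigr)\geq 0,
\]
which holds because $a\geq 0$ and by the non-positivity at infinity inequality.

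The main obstacle is the algebraic bookkeeping in the verification of $t_8\leq\hat{\mu}_D(\nu_r)$: several mixed terms in $\nu_r(\varphi_{F_1})$ and $\nu_r(\varphi_{M_1})$ arise, and one must be attentive to the sign of the denominator $\nu_r(\varphi_{M_1})-\delta\nu_r(\varphi_{F_1})$ before dividing through. Once this step is set up properly, the remaining verifications are essentially verbatim translations of the special-case argument given in Lemma \ref{Lemma_t_ibelongstoT_casoespecial}.
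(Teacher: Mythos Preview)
Your proposal is correct and follows exactly the approach the paper indicates: the paper gives no separate proof of this lemma but simply says to reason as in Lemma \ref{Lemma_t_ibelongstoT_casoespecial} using the non-special non-positivity inequality, and your write-up carries this out faithfully. Your justification for the positivity of $\nu_r(\varphi_{M_1})-\delta\nu_r(\varphi_{F_1})$ could be streamlined (it follows directly from $\theta_2^r(D)<0$, $a\geq 0$, $b>0$), but the conclusion and overall argument are sound.
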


\begin{remark}
As in the special divisorial valuation case, if $\nu_r$ is minimal with respect to $D$, by Theorem \ref{Thm_mupico_nonpositive} and Corollary \ref{Cor_minimalcase}, one gets
$$
\hat{\mu}_D(\nu_r)=b\overline{\beta}_{g+1}(\nu_r)/\nu_r(\varphi_{F_1})=t_5=t_6=t_7=t_8.
$$ 
Otherwise, Lemma  \ref{Lemma_t_ibelongstoT_casonoespecial} provides two values, $t_5$ and $t_6$ (respectively, $t_7$ and $t_8$) when $\theta_2^r(D)\geq 0$ (respectively, $\theta_2^r(D)<0$). When $\theta_2^r(D)=0$, one has that $\hat{\mu}_D(\nu_r)>t_5=t_6=t_7=t_8$, and if $a=0$ and $\theta_2^r(D)<0,$ then $t_8=\hat{\mu}_D(\nu_r)$. Moreover, if the equality $2\nu_r(\varphi_{M_1})\nu_r(\varphi_{F_1})-\delta\nu_r(\varphi_{F_1})^2 = \overline{\beta}_{g+1}(\nu_r)$ holds, we obtain that $t_6=\hat{\mu}_D(\nu_r)$ (respectively, $t_8=\hat{\mu}_D(\nu_r)$) whenever $\theta_2^r(D)>0$ (respectively, $\theta_2^r(D)<0$).
\end{remark}

Reasoning as in Lemma \ref{Lemma_Ntdeterminesmatrixnegativedefinite_casoespecial}   one proves that the divisors $D_3$ and $D_4$ in Lemma \ref{Lemma_positivepartnef_nonspecialcase} are big. Moreover,  $D_3\cdot \tilde{F}_1=0,D_4\cdot \tilde{M}_1=0$, and $D_3\cdot E_i=0$ and $D_4\cdot E_i=0$, for $1\leq i\leq r-1$. As a consequence, one gets the following result.

\begin{lemma}\label{Lemma_Ntdeterminesmatrisdefinednegative_caso noespecial}
Let $\nu_r$ be a divisorial valuation and $D$ a divisor as in Lemma \ref{Lemma_positivepartnef_nonspecialcase}. Assume also that $\nu_r$ is non-minimal with respect to $D$. The intersection matrix determined by the set of divisors $\{\tilde{F}_1,E_1,\ldots,E_{r-1}\}$ (respectively, $\{\tilde{M}_1,E_1,\ldots,E_{r-1}\}$) is negative definite.
\end{lemma}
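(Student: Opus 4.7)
The plan is to mirror the argument of Lemma \ref{Lemma_Ntdeterminesmatrixnegativedefinite_casoespecial}, swapping the auxiliary divisors $D_1, D_2$ for their non-special counterparts $D_3, D_4$ from Lemma \ref{Lemma_positivepartnef_nonspecialcase}. The end goal, for each of the two families, is to exhibit a big and nef divisor that is orthogonal to every member of the family; this is exactly the hypothesis of \cite[Lemma 4.3]{BauKurSze}, which then forces the corresponding intersection matrix to be negative definite.

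First I would handle $\{\tilde{F}_1, E_1, \ldots, E_{r-1}\}$ using $D_3$. Nefness is given by Lemma \ref{Lemma_positivepartnef_nonspecialcase}. Orthogonality is essentially automatic: the proximity equalities make $\sum_i \nu_r(\mathfrak{m}_i)E_i^*$ orthogonal to every $E_i$ with $1 \le i \le r-1$, and $D_3 \cdot \tilde{F}_1 = 0$ because $F^* \cdot \tilde{F}_1 = 0$ and the coefficient of $M^*$ in $D_3$ was chosen precisely to kill that contribution (a fact already noted in the paragraph before the statement). For bigness, I would repeat the algebra from the proof of Lemma \ref{Lemma_Ntdeterminesmatrixnegativedefinite_casoespecial} verbatim: expand
$$
D_3^2 = D^2 - \frac{b^2\,\overline{\beta}_{g+1}(\nu_r)}{\nu_r(\varphi_{F_1})^2},
$$
factor through $\hat{\mu}_D(\nu_r)^2$, and use the strict inequality $\hat{\mu}_D(\nu_r)^2 > D^2\,\overline{\beta}_{g+1}(\nu_r)$ coming from non-minimality of $\nu_r$ via \eqref{Cond_valoracionminimalD2beta}, together with the nef-bound $2ab+b^2\delta \geq (b/\nu_r(\varphi_{F_1}))\hat{\mu}_D(\nu_r)$ that follows from the nefness of $D_3$.

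The family $\{\tilde{M}_1, E_1, \ldots, E_{r-1}\}$ is then treated with $D_4$. Nefness is again in Lemma \ref{Lemma_positivepartnef_nonspecialcase}, and orthogonality to the $E_i$ is the same proximity-equality argument. For orthogonality to $\tilde{M}_1$ I would write $D_4$ as the positive combination of $\Delta_r$ and $\Gamma_r$ given in the proof of Lemma \ref{Lemma_positivepartnef_nonspecialcase}; both $\Delta_r$ and $\Gamma_r$ meet $\tilde{M}_1$ trivially by \cite[Theorem 4.8]{GalMonMor}, whence $D_4 \cdot \tilde{M}_1 = 0$. For bigness, the only substantive change is that the special-case inequality $2\nu_r(\varphi_{M_0})\nu_r(\varphi_{F_1}) + \delta\nu_r(\varphi_{F_1})^2 \geq \overline{\beta}_{g+1}(\nu_r)$ is replaced by its non-special analogue $2\nu_r(\varphi_{M_1})\nu_r(\varphi_{F_1}) - \delta\nu_r(\varphi_{F_1})^2 \geq \overline{\beta}_{g+1}(\nu_r)$, which is the very characterization of non-positivity at infinity in the non-special case; combined with Theorem \ref{Thm_mupico_nonpositive}(b) this yields $D_4^2 > 0$.

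With both $D_3$ and $D_4$ big and nef, each orthogonal to its corresponding family, applying \cite[Lemma 4.3]{BauKurSze} concludes the argument. I expect the only delicate point to be the bigness of $D_4$: one needs strict positivity of $D_4^2$, not merely $D_4^2 \geq 0$, and this is precisely where the non-minimality hypothesis on $\nu_r$ has to be fed in, exactly as in the $D_1$ computation. Once that strict inequality is secured, the remainder of the proof is a faithful transcription of the special case.
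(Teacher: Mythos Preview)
Your proposal is correct and follows essentially the same approach as the paper. The paper's argument is compressed into the paragraph immediately preceding the lemma: one shows that $D_3$ and $D_4$ are big (nefness being already in Lemma~\ref{Lemma_positivepartnef_nonspecialcase}) by reproducing the chain of inequalities from the proof of Lemma~\ref{Lemma_Ntdeterminesmatrixnegativedefinite_casoespecial}, checks the orthogonality relations $D_3\cdot\tilde{F}_1=D_4\cdot\tilde{M}_1=0$ and $D_3\cdot E_i=D_4\cdot E_i=0$ for $1\le i\le r-1$, and then invokes \cite[Lemma 4.3]{BauKurSze}. Your write-up fills in exactly these steps. One small remark: for the bigness of $D_4$ the paper's template argument does not actually use the non-positivity inequality nor Theorem~\ref{Thm_mupico_nonpositive}(b) directly; it uses the nefness of $D_4$ (giving $D^2\ge \frac{a+b\delta}{\nu_r(\varphi_{M_1})}\hat{\mu}_D(\nu_r)$) together with non-minimality, exactly as for $D_1$. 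Your alternative route via Theorem~\ref{Thm_mupico_nonpositive}(b) also works and is equivalent, so this is a matter of presentation rather than substance.
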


Our upcoming proposition considers a valuation $\nu_r$ and a divisor $D$ as stated before Lemma \ref{Lemma_positivepartnef_nonspecialcase} and determines the Zariski decomposition of the divisors $D^*-t_iE_r,$ $5\leq i\leq 8,$ where $t_i$ are the rational numbers defined in Lemma \ref{Lemma_t_ibelongstoT_casonoespecial}. We will use the above defined value $\theta_2^r(D)$ and the divisors $D_3,D_4$ and $\Delta_r=(\nu_r(\varphi_{M_1})-\delta\nu_r(\varphi_{F_1}))F^*+\nu_r(\varphi_{F_1})M^*-\sum_{i=1}^r\nu_r(\mathfrak{m}_i)E_i^*$ given in Lemma \ref{Lemma_positivepartnef_nonspecialcase} and its proof.

\begin{proposition}\label{Prop_Zardecomp_caso_noespecial}
The following statements hold.
\begin{itemize}
\item[(a)] The positive and negative parts of the Zariski decomposition of the divisor $D_{t_5}=D^* -t_5E_r$ (respectively, $D_{t_6}=D^*-t_6E_r$) are 
\begin{equation*}
\begin{array}{c}
P_{D_{t_5}}\sim D_3 \ \text{ and } \  N_{D_{t_5}} = \dfrac{b}{\nu_r(\varphi_{F_1})}\displaystyle\sum_{i=1}^{r-1}\nu_r(\varphi_i)E_i\\[4mm]
\Bigg(\text{respectively, } P_{D_{t_6}}\sim \dfrac{b}{\nu_r(\varphi_{F_1})}\Delta_r  \text{ and }\\[4mm]
N_{D_{t_6}}=\dfrac{\theta_2^r(D)}{\nu_r(\varphi_{F_1})}\tilde{F}+\displaystyle\sum_{i=1}^{r-1} \dfrac{b\nu_r(\varphi_i) + \theta_2^r(D)\nu_i(\varphi_{F_1}) }{\nu_r(\varphi_{F_1})}E_i\Bigg),
\end{array}
\end{equation*}
when  $\theta_2^r(D)\geq 0$.
\item[(b)] The positive and negative parts of the Zariski decomposition of $D_{t_7}=D^* -t_7E_r$ (respectively, $D_{t_8}=D^*-t_8E_r$) are 
\begin{equation*}
\begin{array}{c}
P_{D_{t_7}}\sim D_4 \ \text{ and } \  N_{D_{t_7}} = \dfrac{a+b\delta}{\nu_r(\varphi_{M_1})}\displaystyle\sum_{i=1}^{r-1}\nu_r(\varphi_i)E_i\\[4mm] 
\Bigg(\text{respectively, } P_{D_{t_8}}\sim \dfrac{a}{\nu_r(\varphi_{M_1})-\delta \nu_r(\varphi_{F_1})}\Delta_r  \text{ and }\\[4mm] 
N_{D_{t_8}}=\dfrac{-\theta_2^r(D)}{\nu_r(\varphi_{M_1})-\delta\nu_r(\varphi_{F_1})}\tilde{M}_1
+\displaystyle\sum_{i=1}^{r-1}\dfrac{a\nu_r(\varphi_i)-\theta_2^r(D)\nu_i(\varphi_{M_1})}{\nu_r(\varphi_{M_1})-\delta\nu_r(\varphi_{F_1})}E_i\Bigg),
\end{array}
\end{equation*}
when $\theta_2^r(D)<0$.
\end{itemize}
\end{proposition}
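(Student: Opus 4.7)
The proof follows the pattern of Proposition \ref{Prop_Zardecomp_casoespecial}: for each of the four divisors $D_{t_i}$ ($5 \le i \le 8$) I would exhibit a decomposition $P+N \sim D_{t_i}$ and verify the three defining properties of the Zariski decomposition, namely that $P$ is nef, that $P\cdot C = 0$ for every irreducible component $C$ of the support of $N$, and that the intersection matrix of the components of $N$ is negative definite. Uniqueness of the Zariski decomposition then yields the statement. The non-special setting differs from the special one only in that $\Lambda_r$ is replaced by $\Delta_r$ and $M_0$ by $M_1$; the required nefness now comes from \cite[Theorem 4.8]{GalMonMor} via Lemma \ref{Lemma_positivepartnef_nonspecialcase}, and the negative definiteness from Lemma \ref{Lemma_Ntdeterminesmatrisdefinednegative_caso noespecial}.

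For $D_{t_5}$ and $D_{t_7}$ the argument is the simpler one. The positive part is $D_3$ (resp.\ $D_4$), already shown to be nef in Lemma \ref{Lemma_positivepartnef_nonspecialcase}, and the negative part is supported only on the exceptional divisors $E_1,\ldots,E_{r-1}$. The orthogonality relations $D_3\cdot E_i = 0 = D_4\cdot E_i$ for $1\le i\le r-1$ were already noted in the paragraph preceding Lemma \ref{Lemma_Ntdeterminesmatrisdefinednegative_caso noespecial}. The linear equivalence $P+N \sim D_{t_i}$ reduces to the standard identity $\sum_{j=1}^{r}\nu_r(\mathfrak{m}_j)E_j^* \sim \sum_{j=1}^{r-1}\nu_r(\varphi_j)E_j + \overline{\beta}_{g+1}(\nu_r)E_r$ coming from the proximity equalities, and negative definiteness of the intersection matrix of $\{E_1,\ldots,E_{r-1}\}$ is immediate since it is a principal submatrix of the classical negative definite matrix of $\{E_1,\ldots,E_r\}$.

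The cases of $D_{t_6}$ and $D_{t_8}$ are more delicate because the strict transforms $\tilde F_1$ and $\tilde M_1$ appear in the negative part. Here the positive part is a rational multiple of $\Delta_r$, which is nef by \cite[Theorem 4.8]{GalMonMor}. To prove $P+N \sim D_{t_i}$, I would start from the decomposition of $D_{t_5}$ (respectively $D_{t_7}$) already established and add to it a second identity that rewrites the correction term $-\theta_2^r(D)E_r$ in terms of $\tilde F_1$ (respectively $\tilde M_1$) and the $E_i$, using $\tilde F_1 \sim F^* - \sum_{i=1}^{i_{F_1}} E_i^*$ and $\tilde M_1 \sim M^* - \sum_{i=1}^{i_{M_1}} E_i^*$, where $i_{F_1}$ and $i_{M_1}$ are the indices of the last points in $\mathcal{C}_\nu$ through which the strict transforms of $F_1$ and $M_1$ pass. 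After rearranging, this yields exactly the claimed $P$ and $N$, in close analogy with the calculation at the end of the proof of Proposition \ref{Prop_Zardecomp_casoespecial}. Orthogonality of $\Delta_r$ with each $E_i$ follows again from the proximity equalities, while $\Delta_r\cdot \tilde F_1 = 0$ and $\Delta_r\cdot \tilde M_1 = 0$ are direct intersection computations on $Z$ using $F^{*2}=0$, $F^*\cdot M^*=1$, $M^{*2}=\delta$ and $E_i^*\cdot E_j^*=-\delta_{ij}$.

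The main technical obstacle is the bookkeeping in the $D_{t_8}$ case, where the denominator $\nu_r(\varphi_{M_1})-\delta\nu_r(\varphi_{F_1})$ appears and one must keep careful track of which proximate points $p_i$ lie on the strict transform of $M_1$. No conceptually new idea is needed beyond the special case: the entire argument is a careful translation with $M_1$ playing the role of $M_0$ and $\Delta_r$ that of $\Lambda_r$, and once the decomposition is verified, uniqueness of the Zariski decomposition closes the proof.
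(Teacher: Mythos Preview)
Your proposal is correct and follows essentially the same approach as the paper: verify nefness of the claimed positive part (via Lemma \ref{Lemma_positivepartnef_nonspecialcase} and \cite[Theorem 4.8]{GalMonMor}), orthogonality to the components of the negative part (proximity equalities and the relations $\Delta_r\cdot\tilde F_1=\Delta_r\cdot\tilde M_1=0$, which the paper cites from \cite[Proposition 4.1]{GalMonMor}), negative definiteness (Lemma \ref{Lemma_Ntdeterminesmatrisdefinednegative_caso noespecial}), and the linear equivalence $P+N\sim D_{t_i}$ via an explicit two-term identity using the strict-transform relations for $F_1$ and $M_1$. One small slip: for $D_{t_8}$ the difference $t_8-t_7$ is not $-\theta_2^r(D)$, so the ``correction term'' you add to the $D_{t_7}$ decomposition is not literally $-\theta_2^r(D)E_r$; the paper instead organises the two summed identities around the intermediate value $\dfrac{a\,\overline{\beta}_{g+1}(\nu_r)}{\nu_r(\varphi_{M_1})-\delta\nu_r(\varphi_{F_1})}$ rather than $t_7$, but this is only a bookkeeping difference and your scheme works once the correct scalar is inserted.
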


\begin{proof}
We are going to prove Statement (b). A proof for $(a)$  runs similarly.  On the one hand, the components of the divisor $N_{D_{t_7}}$ determine a negative definite intersection matrix. On the other hand, the divisor $P_{D_{t_7}}$ is nef by Lemma \ref{Lemma_positivepartnef_nonspecialcase} and orthogonal to each component of $N_{D_{t_7}}$ by the proximity equalities. So, $P_{D_{t_7}} + N_{D_{t_7}}$ gives the Zariski decomposition of $D_{t_7}$. 

Let us show the claim for $D_{t_8}$. By Lemma \ref{Lemma_Ntdeterminesmatrisdefinednegative_caso noespecial}, the components of $N_{D_{t_8}}$ de\-ter\-mine a negative definite intersection matrix and, by \cite[Proposition 4.1 and Theorem 4.8]{GalMonMor}, the divisor $P_{D_{t_8}}$ is nef and orthogonal to each component of $N_{D_{t_8}}$. Finally, we are going to see that $P_{D_{t_8}}+N_{D_{t_8}}\sim D_{t_8}$, which completes the proof. Indeed, let $p_{i_{M_1}}$ be the last point in the configuration of infinitely near points $\mathcal{C}_{\nu_r}$ of the valuation $\nu_r$ through which the strict transform of $M_1$ goes. Since $\tilde{M}_1\sim M^*-\sum_{i=1}^{i_{M_1}}E_i^*$, it holds that 
$$
\dfrac{a(\Delta_r+\sum_{i=1}^{r-1}\nu_r(\varphi_i)E_i) + \theta_2^r(D)M^*}{\nu_r(\varphi_{M_1})-\delta\nu_r(\varphi_{F_1})}\sim D-\dfrac{a\overline{\beta}_{g+1}(\nu_r)}{\nu_r(\varphi_{M_1})-\delta\nu_r(\varphi_{F_1})}E_r. 
$$  
In addition, 
$$
\dfrac{-\theta_2^r(D)}{\nu_r(\varphi_{M_1})-\delta\nu_r(\varphi_{M_1})}\left(\sum_{i=1}^{r-1}\nu_i(\varphi_{M_1})E_i-\sum_{i=1}^{i_{M_1}}E_i^*\right)=\dfrac{-\theta_2^r(D)\nu_r(\varphi_{M_1})}{\nu_r(\varphi_{M_1})-\delta\nu_r(\varphi_{F_1})}E_r ,
$$
and the result follows after summing both expressions. 
\end{proof}

We conclude our paper by determining the vertices of the Newton-Okoun\-kov body $\Delta_\nu(D)$, where $D$ and $\nu$ are as in the paragraph before Lemma \ref{Lemma_positivepartnef_nonspecialcase}. Recall that $\nu_r$ is the first component of $\nu.$ 
We again divide our description of $\Delta_\nu(D)$ in two cases:

Case D: Either $g^*>1$ or $g^*=1$ and $\nu(\varphi_{M_1})\neq \overline{\beta}_1(\nu)$.

Case E: The value $g^*$ equals $1$ and $\nu(\varphi_{M_1})= \overline{\beta}_1(\nu)$.

\medskip

Let us start with the case D. Arguing as before Theorem \ref{Thm_NOBodies_specialcase}, the points 
\begin{equation*}
\begin{array}{c}
Q_{10}=\left(\dfrac{b\overline{\beta}_{g+1}(\nu_r)}{\nu_r(\varphi_{F_1})},\dfrac{b\nu_r(\varphi_\eta)}{\nu_r(\varphi_{F_1})}\right) \left(\text{respectively, } Q_{10}=\left(\dfrac{b\overline{\beta}_{g+1}(\nu_r)}{\nu_r(\varphi_{F_1})},0\right)\right),\\[4mm]
Q_{11}=Q_{10}+\left(0, \dfrac{b}{\nu_r(\varphi_{F_1})}\right),\\[4mm]
Q_{12}=\left(\dfrac{b\overline{\beta}_{g+1}(\nu_r)}{\nu_r(\varphi_{F_1})}+\theta_2^r(D),\dfrac{b\nu_r(\varphi_\eta)+\theta_2^r(D)\nu_\eta(\varphi_{F_1})}{\nu_r(\varphi_{F_1})}\right)\\[4mm]
\left(\text{respectively } Q_{12}=\left(\dfrac{b\overline{\beta}_{g+1}(\nu_r)}{\nu_r(\varphi_{F_1})}+\theta_2^r(D),0\right)\right)\text{and } Q_{13}=Q_{12} + \left(0, \dfrac{b}{\nu_r(\varphi_{F_1})}\right)\\[4mm]
\end{array}
\end{equation*}
belong to $\Delta_\nu(D)$ when $\theta_2^r(D)\geq 0$ and the point $p_{r+1}\in E_r\cap E_\eta$ is satellite (respectively, free). When $\theta_2^r(D)<0$ and the point $p_{r+1}$ is satellite (respectively, free)), the points in $\Delta_\nu(D)$ are
\begin{equation*}
\begin{array}{c}
Q_{14}=\left(\dfrac{(a+b\delta)\overline{\beta}_{g+1}(\nu_r)}{\nu_r(\varphi_{M_1})},\dfrac{(a+b\delta)\nu_r(\varphi_\eta)}{\nu_r(\varphi_{M_1})}\right)\\[4mm]
\left(\text{respectively, } Q_{14}=\left(\dfrac{(a+b\delta)\overline{\beta}_{g+1}(\nu_r)}{\nu_r(\varphi_{M_1})},0\right)\right), Q_{15}=Q_{14}+\left(0, \dfrac{a+b\delta}{\nu_r(\varphi_{M_1})}\right),\\[4mm]
Q_{16}=\left(\dfrac{a\overline{\beta}_{g+1}(\nu_r)-\theta_2^r(D)\nu_r(\varphi_{M_1})}{\nu_r(\varphi_{M_1})-\delta\nu_r(\varphi_{F_1})},\dfrac{a\nu_r(\varphi_\eta)-\theta_2^r(D)\nu_\eta(\varphi_{M_1})}{\nu_r(\varphi_{M_1})-\delta\nu_r(\varphi_{F_1})}\right)\\[4mm]

\left(\text{respectively, }Q_{16}=\left(\dfrac{a\overline{\beta}_{g+1}(\nu_r)-\theta_2^r(D)\nu_r(\varphi_{M_1})}{\nu_r(\varphi_{M_1})-\delta\nu_r(\varphi_{F_1})},0\right)\right)\\[4mm]
\end{array}
\end{equation*}
\begin{equation*}
\begin{array}{c}
\text{ and } Q_{17}=Q_{16} + \left(0, \dfrac{a}{\nu_r(\varphi_{M_1})-\delta\nu_r(\varphi_{F_1})}\right).\\[4mm]
\end{array}
\end{equation*}

Also, when $p_{r+1}$ is satellite (respectively, free), the point $Q_{18}=(\hat{\mu}_D(\nu_r),\hat{\mu}_D(\nu_\eta))$ (respectively, $Q_{18}=(\hat{\mu}_D(\nu_r),0)$)  belongs to $\Delta_\nu(D)$ by Theorem \ref{Thm_mupico_nonpositive}.

\begin{theorem}\label{Thm_NOBodies_nonspecialcase}
Let $\nu$ be a valuation in Case D. With notations as in the previous paragraphs, the Newton-Okoun\-kov body $\Delta_\nu(D)$ of $D$ with respect to $\nu$ is a quadrilateral if and only if $a\neq0$ and $\theta_2^r(D)\neq 0$. Otherwise, it is a triangle.

The vertices of the quadrilateral are 
\begin{itemize}
\item[(a)] $(0,0),Q_{10},Q_{12}$ (respectively, $Q_{14},Q_{16}$) and $Q_{18}$ when $\theta_2^r(D)> 0$ (respectively, $\theta_2^r(D)<0$), $p_{r+1}$ is the satellite point  $E_\eta\cap E_r$ and $r\not\preccurlyeq \eta.$
\item[(b)] $(0,0),Q_{11},Q_{13}$ (respectively, $Q_{15},Q_{17}$) and $Q_{18}$ when $\theta_2^r(D)> 0$ (respectively, $\theta_2^r(D)<0$), $p_{r+1}$ is the satellite point $E_\eta\cap E_r$ and $r\preccurlyeq \eta.$
\item[(c)] $(0,0),Q_{11},Q_{13}$ (respectively, $Q_{15},Q_{17}$) and $Q_{18}$ when $\theta_2^r(D)> 0$ (respectively, $\theta_2^r(D)<0$) and $p_{r+1}$ is a free point.
\end{itemize}

When $a=0$ and $\theta_2^r(D)<0$, $Q_{16}=Q_{18}=Q_{17}$ and the vertices of the triangle $\Delta_\nu(D)$ are as described in items (a), (b) and (c).

Finally, replacing $\theta_2^r(D)>0$ or $\theta_2^r(D)<0$ with $\theta_2^r(D)=0$ in items (a), (b) and (c) we obtain the vertices of the triangle $\Delta_\nu(D)$ because $Q_{10}=Q_{12}=Q_{14}=Q_{16}$ in Case (a) and $Q_{11}=Q_{13}=Q_{15}=Q_{17}$ otherwise.
\end{theorem}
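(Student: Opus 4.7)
The plan is to mirror the proof of Theorem \ref{Thm_NOBodies_specialcase} in the non-special setting, replacing $\Lambda_r$ by $\Delta_r$ and the pair $(\nu_r(\varphi_{M_0}),F)$ by $(\nu_r(\varphi_{M_1})-\delta\nu_r(\varphi_{F_1}),M_1)$ throughout. By Remark \ref{Remark_muequalmupico} and \cite[Theorem 6.4]{LazMus}, the body $\Delta_\nu(D)$ coincides with the region $\{(t,y)\in\mathbb{R}^2 \mid 0\le t\le\hat{\mu}_D(\nu_r),\;\alpha(t)\le y\le\beta(t)\}$, where $\alpha(t)=\operatorname{ord}_{p_{r+1}}(N_{D_t}|_{E_r})$ and $\beta(t)=\alpha(t)+P_{D_t}\cdot E_r$. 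Applying Proposition \ref{Prop_Zardecomp_caso_noespecial} at the critical abscissae $t_5,t_6$ (when $\theta_2^r(D)\geq 0$) and $t_7,t_8$ (when $\theta_2^r(D)<0$), together with the fact that $(\tilde F_1\cdot E_r,\tilde F_1\cdot E_\eta)$ and $(\tilde M_1\cdot E_r,\tilde M_1\cdot E_\eta)$ can be read off the dual graph, gives precisely the points $Q_{10},\dots,Q_{17}$ in the body; the endpoint $Q_{18}$ belongs to $\Delta_\nu(D)$ by Theorem \ref{Thm_mupico_nonpositive}. Since $\Delta_\nu(D)$ is convex, it contains the convex hulls $\Delta$ of $\{(0,0),Q_{10},Q_{11},Q_{12},Q_{13},Q_{18}\}$ and $\Delta'$ of $\{(0,0),Q_{14},Q_{15},Q_{16},Q_{17},Q_{18}\}$ in the respective sign cases.

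Next, I would compute the Euclidean areas of $\Delta$ and $\Delta'$ and check that both equal $D^2/2=\text{vol}(D)/2$; by the volume formula recorded after Definition \ref{def_NObody}, this forces equality of $\Delta_\nu(D)$ with $\Delta$ or $\Delta'$. For $\Delta$ the decomposition into the triangles $(0,0),Q_{10},Q_{11}$ and $Q_{12},Q_{13},Q_{18}$ plus the parallelogram $Q_{10},Q_{11},Q_{12},Q_{13}$ (of width $b/\nu_r(\varphi_{F_1})$ and horizontal side $\theta_2^r(D)$) gives, after using $\hat{\mu}_D(\nu_r)=a\nu_r(\varphi_{F_1})+b\nu_r(\varphi_{M_1})$ from Theorem \ref{Thm_mupico_nonpositive}(b), the sum $(2ab+b^2\delta)/2=D^2/2$. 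For $\Delta'$ one splits it into the two triangles $(0,0),Q_{14},Q_{15}$ and $Q_{16},Q_{17},Q_{18}$ and the trapezium $Q_{14},Q_{15},Q_{16},Q_{17}$; the same kind of algebraic simplification (the terms in $\overline{\beta}_{g+1}(\nu_r)$ cancel as in the special case) again yields $D^2/2$.

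Having identified the body, I would then reduce each candidate hexagon to the claimed quadrilateral (or triangle) by showing that the remaining extreme points are collinear with $(0,0)$ and $Q_{18}$. In the free case this collinearity is immediate from the second coordinate being $0$. In the satellite case with $r\preccurlyeq\eta$ one uses the same identities invoked in the proof of Theorem \ref{Thm_NOBodies_specialcase}, namely
\[
\nu_r(\varphi_\eta)=\overline{\beta}_{g+1}(\nu_r)\,\frac{\overline{\beta}_0(\nu_\eta)}{\overline{\beta}_0(\nu_r)},\qquad
\nu_\eta(\varphi_{F_1})=\frac{\overline{\beta}_0(\nu_\eta)}{\overline{\beta}_0(\nu_r)}\nu_r(\varphi_{F_1}),
\]
and the analogous formula $\nu_\eta(\varphi_{M_1})=(\overline{\beta}_0(\nu_\eta)/\overline{\beta}_0(\nu_r))\nu_r(\varphi_{M_1})$ (obtained because $\tilde M_1$ does not pass through any $p_j$ with $j\geq 2$), to check that $Q_{10}$ and $Q_{12}$ (respectively $Q_{14}$ and $Q_{16}$) lie on the line $\overline{\beta}_0(\nu_r)y=\overline{\beta}_0(\nu_\eta)x$, which also contains $Q_{18}$. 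When $r\not\preccurlyeq\eta$ the analogous computation eliminates $Q_{11},Q_{13}$ (respectively $Q_{15},Q_{17}$) instead.

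Finally, the degeneracy criterion is obtained by comparing slopes: the quadrilateral in case (b) or (c) collapses to a triangle exactly when $Q_{13}$ (respectively $Q_{17}$) lies either on the line $\overline{\beta}_g(\nu_\eta)x=\overline{\beta}_g(\nu_r)y$ joining $(0,0)$ to $Q_{11}$ (resp.\ $Q_{15}$) or on the line through $Q_{11}$ and $Q_{18}$; a direct computation shows each such collinearity is equivalent to $\theta_2^r(D)=0$. The only other source of degeneration occurs in case $\theta_2^r(D)<0$ when the triangle $Q_{16},Q_{17},Q_{18}$ collapses, i.e.\ $Q_{16}=Q_{17}=Q_{18}$, which happens precisely when $a=0$ (note that in the non-special case $\delta$ plays no role here, since one has the constraint $a>-b\delta$ and $\nu_r(\varphi_{M_1})-\delta\nu_r(\varphi_{F_1})>0$). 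The main obstacle is the bookkeeping in the area computation for $\Delta'$ and the verification that the claimed collinearities hold with the non-special invariants; everything else is formally parallel to the special case already proved.
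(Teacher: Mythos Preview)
Your proposal follows the paper's approach exactly: the paper computes only the area of the convex hull of $\{(0,0),Q_{14},\ldots,Q_{18}\}$, checks it equals $D^2/2$, and defers both the other sign case and the collinearity/degeneracy analysis to the proof of Theorem \ref{Thm_NOBodies_specialcase}; you have simply written those deferred parts out in more detail.

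One correction is needed. Your parenthetical claim that ``$\tilde M_1$ does not pass through any $p_j$ with $j\geq 2$'' is false: since $M_1^2=\delta>0$ while $\tilde M_1^2<0$ by the very definition of the non-special case, the strict transform of $M_1$ must pass through at least $\delta+1$ points of $\mathcal{C}_{\nu_r}$ (in the paper's closing example $M_1$ passes through $p_1,p_2,p_3$). The identity $\nu_\eta(\varphi_{M_1})=(\overline{\beta}_0(\nu_\eta)/\overline{\beta}_0(\nu_r))\,\nu_r(\varphi_{M_1})$ nevertheless holds, for the same reason the analogous identities for $F_1$ and $M_0$ hold in the special case: a smooth curve can pass only through consecutive \emph{free} points of the configuration (by the proximity inequalities applied to a curve whose multiplicities are all equal to one), and for germs supported on this initial free arc the ratio $\nu_\eta/\nu_r$ is the constant $\overline{\beta}_0(\nu_\eta)/\overline{\beta}_0(\nu_r)$. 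With this amended justification your argument goes through.
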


\begin{proof}
We are going to show that $D^2/2$ is the area of the convex set $\Delta$ generated by the points $(0,0),Q_{14},Q_{15},Q_{16},Q_{17}$ and $Q_{18}$.  The case concerning the points $(0,0),Q_{10},Q_{11},Q_{12},Q_{13}$ and $Q_{18}$ and the fact of being a quadrilateral or a triangle fo\-llow as in the proof of Theorem \ref{Thm_NOBodies_specialcase}.

The area of the triangle  with vertices $(0,0),Q_{14}$ and $Q_{15}$ (respectively, $Q_{16},Q_{17}$ and $Q_{18}$) is 
$$
\frac{(a+b\delta)^2}{2\nu_r(\varphi_{M_1})^2}\overline{\beta}_{g+1}(\nu_r)\Bigg(\text{respectively, }
$$
$$
\left.\dfrac{a}{2(\nu_r(\varphi_{M_1})-\delta\nu_r(\varphi_{F_1}))}\left(\hat{\mu}_D(\nu_r)-\left(\dfrac{a\overline{\beta}_{g+1}(\nu_r)-\theta_2^r(D)\nu_r(\varphi_{M_1})}{\nu_r(\varphi_{M_1})-\delta\nu_r(\varphi_{F_1})}\right)\right)\right).
$$

The area of the trapezium given by $Q_{14},Q_{15},Q_{16}$ and $Q_{17}$ is
$$
\dfrac{-\theta_2^r(D)\left((a+b\delta)(\nu_r(\varphi_{M_1})-\delta\nu_r(\varphi_{F_1}))+a\nu_r(\varphi_{F_1})\right)(\nu_r(\varphi_{M_0})^2-\delta\overline{\beta}_{g+1}(\nu_r))}{2\nu_r(\varphi_{M_1})^2(\nu_r(\varphi_{M_1})-\delta\nu_r(\varphi_{F_1}))^2}.
$$
Summing the above three areas, we notice that the coefficient of $\overline{\beta}_{g+1}(\nu_r)$ vanishes and it suffices to sum the following three fractions  
\begin{equation*}
\begin{array}{c}
\dfrac{a\hat{\mu}_D(\nu_r)}{2(\nu_r(\varphi_{M_1})-\delta\nu_r(\varphi_{F_1}))},\dfrac{a\theta_2^r(D)\nu_r(\varphi_{M_1})}{2(\nu_r(\varphi_{M_1})-\delta\nu_r(\varphi_{F_1}))^2} \text{ and }\\[4mm] 
\dfrac{-\theta_2^r(D)\nu_r(\varphi_{M_1})^2((a+b\delta)(\nu_r(\varphi_{M_1})-\delta\nu_r(\varphi_{F_1}))+a\nu_r(\varphi_{F_1}))}{2\nu_r(\varphi_{M_1})^2(\nu_r(\varphi_{M_1})-\delta\nu_r(\varphi_{F_1}))^2}.
\end{array}
\end{equation*}
After computing, one gets $(2ab+\delta b^2)/2,$ which concludes the proof.  
\end{proof}

\begin{example}
Let $p$ be a general point of the Hirzebruch surface $\mathbb{F}_2$  and $\nu_r$  a non-special divisorial valuation centered at $\mathcal{O}_{\mathbb{F}_2,p},$ whose sequence of maximal contact values is $\{\overline{\beta}_i(\nu_r)\}_{i=0}^3=\{15,51,262,$ $786\}$. Let $\mathcal{C}_{\nu_r}=\{p_i\}_{i=1}^{12}$ (with $p=p_1$) be its  configuration of infinitely near points, $F_1$ the fiber which passes through $p$ and $M_1$ the irreducible section linearly equivalent to $M$ that passes through $p$ and whose strict transform passes through $p_2$ and $p_3$. Notice that this means that the self-intersection of $\tilde{M}_1$ is negative. Then, $\nu_r(\varphi_{F_1})=15$ and $\nu_r(\varphi_{M_1})=45$ and so $2\nu_r(\varphi_{F_1})\nu_r(\varphi_{M_1})-\nu_r(\varphi_{F_1})^2\delta=900>786=\overline{\beta}_{g+1}(\nu_r)$. As a consequence, $\nu_r$ is non-positive at infinity by \cite[Theorem 4.8]{GalMonMor}. 

Let $\nu=\nu_{E_\bullet}$ be the valuation defined by the flag $$E_{\bullet}=\{Z=Z_{12}\supset E_{12}\supset\{p_{13}\}\},$$ where $p_{13}\in E_{9}\cap E_{12}$. By Theorem \ref{Thm_NOBodies_nonspecialcase}, the coordinates of the vertices of the Newton-Okounkov body $\Delta_\nu(2F+5M)$ are 
$$
(0,0),Q_{14}=\left(\dfrac{9432}{45},\dfrac{3132}{45}\right)\!\!,Q_{16}=\left(\dfrac{3597}{15},\dfrac{1197}{15}\right) \text{and } Q_{18}=\left(255,85\right),
$$
since $\nu_r$ is non-minimal with respect to $2F+5M$ by Corollary \ref{Cor_nonminimalresptanyD}, $\theta_2^r(D)<0$ and $12=r\not\preccurlyeq \eta=9.$
\end{example}

Finally, assume that $\nu$ is in Case E. By \cite[Theorem 6.4]{LazMus} and Proposition \ref{Prop_Zardecomp_caso_noespecial}, if $p_{r+1}$ is the satellite point $E_r\cap E_\eta$ and $\theta_2^r(D)\geq 0$ (respectively, $\theta_2^r(D)<0$), the points $Q_{10},Q_{11},Q_{12},Q_{13}$ (respectively, $Q_{14},Q_{15},Q_{16},Q_{17}$) and $Q_{18}$ provided before Theorem \ref{Thm_NOBodies_nonspecialcase} for the satellite case belong to $\Delta_\nu(D)$. When $p_{r+1}$ is a free point and $\theta_2^r(D)<0$ (respectively, $\theta_2^r(D)\geq 0$), the points 
\begin{equation*}
\begin{array}{c}
Q_{14}=\left(\dfrac{(a+b\delta)\overline{\beta}_{g+1}(\nu_r)}{\nu_r(\varphi_{M_1})},0\right), Q_{15}=Q_{14}+\left(0, \dfrac{a+b\delta}{\nu_r(\varphi_{M_1})}\right),\\[4mm]
Q_{16}=\left(\dfrac{a\overline{\beta}_{g+1}(\nu_r)-\theta_2^r(D)\nu_r(\varphi_{M_1})}{\nu_r(\varphi_{M_1})-\delta\nu_r(\varphi_{F_1})},\dfrac{-\theta_2^r(D)}{\nu_r(\varphi_{M_1})-\delta\nu_r(\varphi_{F_1})}\right),\\[4mm]
 Q_{17}=Q_{16} + \left(0, \dfrac{a}{\nu_r(\varphi_{M_1})-\delta\nu_r(\varphi_{F_1})}\right)\\[4mm]
\end{array}
\end{equation*}
(respectively, $Q_{10},Q_{11},Q_{12},Q_{13}$ given before Theorem \ref{Thm_NOBodies_nonspecialcase} for the free case) and $Q_{18}=(\hat{\mu}_D(\nu_r)),b)$ are in $\Delta_\nu(D)$.
\begin{theorem}\label{Thm_NOBcasonoespecial_g*=1M1}
Let $\nu$ be a valuation in Case E. Under the above notations, the Newton-Okounkov body $\Delta_\nu(D)$ of $D$ with respect to $\nu$ is a quadrilateral if and only if $a\neq 0$. Otherwise, it is a triangle.

The vertices of the quadrilateral are 
\begin{itemize}
\item[(a)] $(0,0),Q_{10},Q_{13}$ (respectively, $Q_{15},Q_{16}$) and $Q_{18}$ if $\theta_1^r(D)\geq 0$ (respectively, $\theta_1^r(D)< 0$), $p_{r+1}$ is the satellite point $E_{r}\cap E_\eta$ and $r\not\preccurlyeq \eta.$
\item[(b)]$(0,0),Q_{11},Q_{12}$ (respectively, $Q_{14},Q_{17}$) and $Q_{18}$ if $\theta_1^r(D)\geq 0$ (respectively, $\theta_1^r(D)< 0$), $p_{r+1}$ is the satellite point $E_{r}\cap E_\eta$ and $r\preccurlyeq \eta.$
\item[(c)]$(0,0),Q_{11},Q_{12}$ (respectively, $Q_{14},Q_{17}$) and $Q_{18}$ if $\theta_1^r(D)\geq 0$ (respectively, $\theta_1^r(D)< 0$) and $p_{r+1}$ is a free point.
\end{itemize}
In addition, if $a=0$, then the vertices of the triangle $\Delta_\nu(D)$ are the previous ones where $Q_{16}=Q_{18}=Q_{17}$.
\end{theorem}
\begin{proof}
It follows a  reasoning as in the proof of Theorem \ref{Thm_NOBodies_nonspecialcase} to compute the area of the convex sets  generated by the points given in the statement, and as in Theorem \ref{Cond_Fg*} (b) after taking into account the equalities $$\nu_\eta(\varphi_{M_1})\overline{\beta}_{g^*}(\nu_r)=\nu_r(\varphi_{M_1})\overline{\beta}_{g^*}(\nu_\eta)\text{ and }\nu_\eta(\varphi_{F_1})\overline{\beta}_{0}(\nu_r)=\nu_r(\varphi_{F_1})\overline{\beta}_{0}(\nu_\eta).$$
\end{proof}
%%%%%%%%%%%%%%%%%%%%%%%%%%%%%%%%%%%%%%%%%%%%%%%%%%%%%%%%%%
%%%%%%%%%%%%%%%%% Bibliografía %%%%%%%%%%%%%%%%%%%%%%%%%%%
%%%%%%%%%%%%%%%%%%%%%%%%%%%%%%%%%%%%%%%%%%%%%%%%%%%%%%%%%%

\bibliographystyle{plain}
\bibliography{BIBLIO}

\begin{thebibliography}{10}

\bibitem{Abh1}
S.~Abhyankar.
\newblock Local uniformization on algebraic surfaces over ground fields of
  characteristic {$p\neq 0$}.
\newblock {\em Ann. Math. (2)}, 63:491--526, 1956.

\bibitem{Abh2}
S.~Abhyankar.
\newblock On the valuations centered in a local domain.
\newblock {\em Amer. J. Math.}, 78:321--348, 1956.

\bibitem{BauKurSze}
T.~Bauer, A.~K\"{u}ronya, and T.~Szemberg.
\newblock Zariski chambers, volumes, and stable base loci.
\newblock {\em J. Reine Angew. Math.}, 576:209--233, 2004.

\bibitem{Beau}
A.~Beauville.
\newblock {\em Complex {A}lgebraic {S}urfaces}, volume~34 of {\em London Math.
  Soc. Student Texts}.
\newblock Cambridge University Press, Cambridge, second edition, 1996.

\bibitem{BouKurMacSze}
S.~Boucksom, A.~K\"uronya, C.~Maclean, and T.~Szemberg.
\newblock Vanishing sequences and {Okounkov} bodies.
\newblock {\em Math. Ann.}, 361:811--834, 2015.

\bibitem{CamPilReg}
A.~Campillo, O.~Piltant, and A.~Reguera.
\newblock Curves and divisors on surfaces associated to plane curves with one
  place at infinity.
\newblock {\em Proc. London Math. Soc.}, 84:559--580, 2002.

\bibitem{Cas}
E.~Casas-Alvero.
\newblock {\em {S}ingularities of {p}lane {c}urves}, volume 276 of {\em London
  Math. Soc. Lect. Notes Ser.}
\newblock Cambridge Univ. Press, 2000.

\bibitem{CilFarKurLozRoeShr}
C.~Ciliberto, M.~Farnik, A.~K\"uronya, V.~Lozovanu, J.~Ro\'e, and C.~Shramov.
\newblock {Newton-Okounkov} bodies sprouting on the valuative tree.
\newblock {\em Rend. Circ. Mat. Palermo (2)}, 66:161--194, 2017.

\bibitem{CutEinLaz}
S.~D. Cutkosky, L.~Ein, and R.~Lazarsfeld.
\newblock Positivity and complexity of ideal sheaves.
\newblock {\em Math. Ann.}, 321(2):213--234, 2001.

\bibitem{DelGalNun}
F.~Delgado, C.~Galindo, and A.~{N\'u\~nez}.
\newblock Saturation for valuations on two-dimensional regular local rings.
\newblock {\em Math. Z.}, 234:519--550, 2000.

\bibitem{Dem}
J.~P. Demailly.
\newblock Singular {H}ermitian metrics on positive line bundles.
\newblock In {\em Complex algebraic varieties ({B}ayreuth, 1990)}, volume 1507
  of {\em Lect. Notes Math.}, pages 87--104. Springer, Berlin, 1992.

\bibitem{DumHarKurRoeSze}
M.~Dumnicki, B.~Harbourne, A.~{K\"uronya}, J.~{Ro\'e}, and T.~Szemberg.
\newblock Very general monomial valuations of $\mathbb{P}^2$ and a {Nagata}
  type conjecture.
\newblock {\em Comm. Anal. Geom.}, 25:125--161, 2017.

\bibitem{EinLazSmi}
L.~Ein, R.~Lazarsfeld, and K.~Smith.
\newblock Uniform approximation of {Abhyankar} valuations in smooth function
  fields.
\newblock {\em Amer. J. Math.}, 125:409--440, 2003.

\bibitem{FavJon1}
C.~Favre and M.~Jonsson.
\newblock {\em The valuative tree}, volume 1853 of {\em Lect. Notes Math.}
\newblock Springer-Verlag, 2004.

\bibitem{FavJon3}
C.~Favre and M.~Jonsson.
\newblock Dynamical compactifications of $\mathbb{C}^2$.
\newblock {\em Ann. Math.}, 173:211--248, 2011.

\bibitem{GalMon}
C.~Galindo and F.~Monserrat.
\newblock The cone of curves and the {C}ox ring of rational surfaces given by
  divisorial valuations.
\newblock {\em Adv. Math.}, 290:1040--1061, 2016.

\bibitem{GalMonMor}
C.~Galindo, F.~Monserrat, and C.-J. Moreno-\'Avila.
\newblock Non-positive and negative at infinity divisorial valuations of
  {H}irzebruch surfaces.
\newblock {\em Rev. Mat. Complut.}, 33:349--372, 2020.

\bibitem{GalMonMoy}
C.~Galindo, F.~Monserrat, and J.~Moyano-Fern\'andez.
\newblock Minimal plane valuations.
\newblock {\em J. Alg. Geom.}, 27:751--783, 2018.

\bibitem{GalMonMoyNic2}
C.~Galindo, F.~Monserrat, J.~Moyano-Fern\'andez, and M.~Nickel.
\newblock {Newton-Okounkov} bodies of exceptional curve valuations.
\newblock {\em Rev. Mat. Iberoam. First on line}, 2020.

\bibitem{KavKho}
K.~Kaveh and A.~Khovanskii.
\newblock {Newton-Okounkov} bodies, semigroups of integral points, graded
  algebras and intersection theory.
\newblock {\em Ann. Math.}, 176:925--978, 2012.

\bibitem{KurLoz1}
A.~K\"uronya and V.~Lozovanu.
\newblock Local positivity of linear series on surfaces.
\newblock {\em Alg. Numb. Th.}, 12(1):1--34, 2018.

\bibitem{KurLozMac}
A.~K\"uronya, V.~Lozovanu, and C.~Maclean.
\newblock Convex bodies appearing as {O}kounkov bodies of divisors.
\newblock {\em Adv. Math.}, 229:2622--2639, 2012.

\bibitem{Laz1}
R.~Lazarsfeld.
\newblock {\em Positivity in algebraic geometry {I}. {C}lassical setting: line
  bundles and linear series}, volume~48.
\newblock Springer-Verlag, Berlin, 2004.

\bibitem{LazMus}
R.~Lazarsfeld and M.~Musta{\c t}{\u a}.
\newblock Convex bodies associated to linear series.
\newblock {\em Ann. Sci. \'Ec. Norm. Sup.}, 42:783--835, 2009.

\bibitem{Mond}
P.~Mondal.
\newblock How to determine the sign of a valuation on $\mathbb{C}[x,y]$.
\newblock {\em Michigan Math. J.}, 66:227--244, 2017.

\bibitem{Oko1}
A.~Okounkov.
\newblock Brunn-{M}inkowski inequality for multiplicities.
\newblock {\em Invent. Math.}, 125:405--411, 1996.

\bibitem{Oko2}
A.~Okounkov.
\newblock Note on the {H}ilbert polynomial of a spherical variety.
\newblock {\em Funct. Anal. Appl.}, 31:138--140, 1997.

\bibitem{Oko3}
A.~Okounkov.
\newblock Why would multiplicities be log-concave?
\newblock In {\em The orbit method in geometry and physics ({M}arseille,
  2000)}, volume 213 of {\em Progr. Math.}, pages 329--347. Birkh\"auser
  Boston, Boston, MA, 2003.

\bibitem{Spiv}
M.~Spivakovsky.
\newblock Valuations in function fields of surfaces.
\newblock {\em Amer. J. Math.}, 112:107--156, 1990.

\bibitem{Zar3}
O.~Zariski.
\newblock Local uniformization on algebraic varieties.
\newblock {\em Ann. Math. (2)}, 41:852--896, 1940.

\bibitem{ZarSam}
O.~Zariski and P.~Samuel.
\newblock {\em Commutative Algebra {II}. {V}ol. {II}}.
\newblock Graduate Texts in Mathematics, Vol. 29, 1975.

\end{thebibliography}

\end{document}